

\documentclass[EJP]{ejpecp} 


\usepackage{verbatim,graphicx, subfigure,longtable}
\usepackage{amssymb,amsthm,amscd}
\usepackage{pstricks}
\usepackage{epstopdf}
\usepackage{color}
\usepackage{transparent}





\SHORTTITLE{Uniform in time particle approximations for PKS} 

\TITLE{Uniform in time interacting particle approximations for nonlinear equations of Patlak-Keller-Segel type
	\thanks{Research partially supported by National Science Foundation (DMS-1305120), the Army Research Office (W911NF-14-1-0331) and DARPA (W911NF-15-2-0122).}} 



\AUTHORS{%
  Amarjit Budhiraja \quad and \quad Wai-Tong (Louis) Fan}



\KEYWORDS{weakly interacting particle systems; uniform propagation of chaos; McKean-Vlasov equations; kinetic equations; chemotaxis; reinforced diffusions; Patlak-Keller-Segel equations; granular media equations; uniform exponential concentration bounds; long time behavior; uniform in time Euler approximations} 

\AMSSUBJ{60K35; 60H30; 60H35; 60K40; 60F05} 

\SUBMITTED{July 24, 2016} 
\ACCEPTED{January 8, 2017} 




\VOLUME{22}
\YEAR{2016}
\PAPERNUM{8}
\DOI{}


\ABSTRACT{We study a system of interacting diffusions that models chemotaxis of biological cells or microorganisms (referred to as particles) in a chemical field that is dynamically modified through the collective contributions from the particles. Such systems of reinforced diffusions have been widely studied and their hydrodynamic limits that are nonlinear non-local partial differential equations are usually referred to as Patlak-Keller-Segel (PKS) equations. 
	
	Solutions of the classical PKS equation 
	may blow up in finite time and much of the PDE literature has been focused on understanding this blow-up phenomenon. In this work we study a modified form of the PKS equation that is natural for applications and for which global existence and uniqueness of solutions are easily seen to hold. Our focus here is instead on the study of 
	the long time behavior   through certain interacting particle systems.
	
	Under the so-called ``quasi-stationary hypothesis'' on the chemical field, the limit PDE reduces to a parabolic-elliptic system that is closely related to granular media equations whose time asymptotic properties have been extensively studied probabilistically through certain Lyapunov functions \cite{Mal03, BGV05, CGM06}. The modified PKS equation studied in the current work is a parabolic-parabolic system for which analogous Lyapunov function constructions are not available. A key challenge in the analysis is that the associated interacting particle system is not a Markov process as the interaction term depends on the whole history of the 
	empirical measure.
	
	We establish, under suitable conditions, uniform in time convergence of the empirical measure of particle states to the solution of the PDE. We also provide uniform in time exponential concentration bounds for rate of the above convergence under additional integrability conditions. Finally, we introduce an Euler discretization scheme for the simulation of the interacting particle system and give error bounds that show that the scheme converges uniformly in time and in the size of the particle system as the discretization parameter approaches zero.}



\newenvironment{pf}{\noindent\emph{Proof \,}}{\mbox{}\qed}

\newcommand{\Hess}{\operatorname{Hess}}


\newcommand{\Cstar}{\mathcal{C}^*([0,\infty)\times \R^d)}
\newcommand{\Crd}{\mathcal{C}([0,\infty):\R^{Nd})}
\newcommand{\clf}{\mathcal{F}}
\newcommand{\clc}{\mathcal{C}}
\newcommand{\clp}{\mathcal{P}}
\newcommand{\clw}{\mathcal{W}}
\newcommand{\clh}{\mathcal{H}}
\newcommand{\rr}{r}
\newcommand{\Err}{\mathcal{R}}


\newcommand\R{{\mathbb R}}     

\newcommand\E{{\mathbb E}}     
\newcommand\F{{\mathcal F}}    



\newcommand\eps{\epsilon}
\newcommand\noi{\noindent}
\newcommand\<{{\langle}}



\begin{document}




\section{Introduction}

Consider the following system of nonlinear nonlocal partial differential equations 
\begin{equation}\label{E:PKS_PP}
\left\{\begin{aligned}
\partial_t u(t,x)           &=  \frac{1}{2}\,\Delta u(t,x) -\nabla\cdot \big(u(t,x)[\chi\,\nabla h(t,x)-\nabla V(x)]\big)   \\
\frac{1}{\gamma} \,\partial_t h(t,x) &=  \frac{1}{2}\,\Delta h(t,x)  -\alpha h(t,x) + \beta \int_{\R^d}u(t,z)\,g(z-x)\,dz,   
\end{aligned}\right.
\end{equation}
where $(t,x)\in (0,\infty)\times \R^d $ and $\alpha,\,\beta,\,\gamma,\,\chi$ are positive constants. The symbols $\nabla$, $\nabla \cdot$ and $\Delta$ denote the gradient operator, the divergence operator and the standard Laplacian respectively. 
Equations of the above form arise as reinforced diffusion models for chemotaxis of particles representing biological cells or microorganisms
in which the particle diffusions are directed by the gradient of a chemical field which in turn is dynamically modified by the contributions of the particles themselves(cf. \cite{Pat53, KS70}). 
The  functions $u(t,x)$ and $h(t,x)$ represent,  respectively, the continuum limits of the densities of the biological particles and the particles constituting the chemical field.  The parameters $\alpha$ and $\beta$ in the second equation  model the decay rate of the chemical particles and the rate at which the biological particles contribute to the  chemical field, respectively. The function $g$ is the dispersal kernel which models the  spread and amount of the chemical produced by the biological particles. A natural form for  $g$ is a Gaussian kernel  $g(x,y)\doteq (2\pi \delta)^{-d/2}\exp\{-|y-x|^2/2\delta\}$, where $\delta$ is a small parameter.
The first equation describes the collective motion of the biological particles. The dynamics of the individual particles is coupled through the gradient of the chemical field, i.e. $\nabla h$, which defines their drift coefficient (up to a positive constant multiplier $\chi$). Finally, the function $V$ models a  confinement potential for the particle motions.
Thus the reinforcement mechanism is as follows. Particles are attracted to the chemical and they emit the chemical at a constant rate, resulting in a  positive feedback: the more the cells are aggregated, the more concentrated in their vicinity is the chemical  they produce  which in turn attracts other cells.


The key feature of the model is the competition between the aggregation resulting from the above reinforcement mechanism  and the diffusive effect which spreads out the biological and chemical particles in space. 
When $V=0$ and $g(z-x)dz$ is the Dirac delta measure $\delta_x$, \eqref{E:PKS_PP} becomes the classical Patlak-Keller-Segel (PKS) model \cite{Pat53, KS70} which has been studied extensively.
It is well known that for the $2$-d PKS model (i.e. $d=2$), there is a critical mass $M_c$ such that (i) the solution 
to \eqref{E:PKS_PP} blows up in finite time if the initial mass $\int u(0,x)dx>M_c$, and (ii) a smooth solution exists for all time if $\int u(0,x)dx< M_c$. For $d\ge3$, the blow up of the solution is related to the $L^{d/2}$ norm of the initial density but here the theory is less well developed.
We refer the reader to the survey articles \cite{Hor03, Hor04} for references to the large literature on the PKS model and its variants. 


One line of active research has focused on the prevention of finite time blowup of solutions 
via various modifications of the classical PKS equation that discourage mass concentration (cf. \cite{DR08, BRB11,CHJ12} and references therein.)  The replacement of the Dirac delta measure by a smooth density $g(z-x)dz$, as is considered in the current work, can be regarded as one such natural modification of the PKS model in which chemicals are dispersed by cells over a region of positive area rather than over a single point. It is  easy to see that  there are global unique solutions for general initial conditions for the system \eqref{E:PKS_PP} (cf. Proposition \ref{Wellpose_PP}).
The focus of this work is instead on the study of the long time behavior of \eqref{E:PKS_PP} and their particle approximations, for which very little is known. 
Long time behavior of weakly interacting particle systems of various types has been investigated in many recent works (\cite{Tam, CMV03, Mal03, CGM06, BGV05})
and although our work uses many ideas similar to those in these works, one key distinguishing feature and challenge in the model considered here is that the associated weakly interacting particle system is not a Markov process.  In particular, Lyapunov function constructions that have been extensively used in the proofs in the above works are not available for the model considered here.


Note that if $(u,h)$ solve \eqref{E:PKS_PP} and $\int u(0,x) dx = m$, then $(u/m, h)$ solves \eqref{E:PKS_PP} with $\beta$ replaced by $\beta m$. Thus we can (and will) assume without loss of generality that
$\int u(0,x) dx =1$.
Our starting point is the following probabilistic representation for the solution of \eqref{E:PKS_PP} in terms of a nonlinear diffusion of the McKean-Vlasov type.
\begin{equation}\label{E:NonlinearProcessPP}
\left\{\begin{aligned}
d\bar X_t           &= dB_t - \nabla V(\bar X_t)\,dt+ \chi\,\nabla h(t,\, \bar X_t)\,dt, \\
\frac{1}{\gamma} \,\partial_t h(t,x) &=  \frac{1}{2}\,\Delta h(t,x)  -\alpha h(t,x) + \beta \int_{\R^d}g(z-x)\,d\mu_t(z), \\
\mathcal{L}( \bar X_t) &=\,\mu_t ,
\end{aligned}\right.
\end{equation}
where $\{B_t\}$ is the standard Brownian motion in $\R^d$ and $\mathcal{L}( \bar X_t)$ denotes the probability law of $ \bar X_t$. 
In Proposition \ref{Wellpose_PP} we will show that the above equation has a unique pathwise solution
$(\bar X_t, h(t, \cdot))$ under natural conditions on the initial data and the kernel $g$. 
Furthermore, for $t>0$ the measure $\mu_t$
admits a density $u(t, \cdot)$ with respect to the Lebesgue measure. The first equation  in \eqref{E:PKS_PP} can be regarded as the Kolmogorov's forward equation for the first equation in  \eqref{E:NonlinearProcessPP}. In particular, it is easy to check that the pair $(u(t, \cdot), h(t, \cdot))$ is a solution of \eqref{E:PKS_PP}.  Along with the nonlinear diffusion
\eqref{E:NonlinearProcessPP} we will also study a mesoscopic particle model for the chemotaxis phenomenon described above that is given through a stochastic system of weakly interacting particles of the following form. 
\begin{equation}\label{E:PKS_PP_N}
\left\{\begin{aligned}
dX^{i,N}_t           &=  dB^i_t - \nabla V(X^{i,N}_t)\,dt+ \chi\,\nabla h_N(t,X^{i,N}_t)\,dt,\quad i=1, \dots,\,N   \\
\frac{1}{\gamma}\,\partial_t h_N(t,x) &=  \frac{1}{2}\,\Delta h_N(t,x) -\alpha h_N(t,x) +\frac{\beta}{N} \sum_{i=1}^N g(X^{i,N}_t-x), \; x \in \mathbb{R}^d
\end{aligned}\right. 
\end{equation}
where $\{B^i\}_{i=1}^N$ are independent standard Brownian motions in $\R^d$. Note that the second equation in \eqref{E:PKS_PP_N}  is the same as that in \eqref{E:NonlinearProcessPP} with  $\mu_t$ replaced by the empirical measure 
\begin{equation}\label{def:muN}
\mu^N_t\doteq  \frac{1}{N} \sum_{i=1}^N \delta_{X^{i,N}_t}.
\end{equation}
In this model a detailed evolution for  biological particles is used  whereas the chemical field is regarded as the continuum limit of much smaller chemical molecules. One can also consider a microscopic model where a detailed evolution equation 
of chemical particles replaces the second equation in \eqref{E:PKS_PP_N}.  Such  `full particle system approximations'
of \eqref{E:PKS_PP} for the classical PKS model (i.e. when $g$ is replaced by a Dirac probability measure) were studied in \cite{Ste00} where the convergence of the empirical measures of the biological and chemical particles to the solution of the limit PDE,  up to the blow up time of the solutions, was established.
Starting from the  works of McKean and Vlasov \cite{Mck}, nonlinear diffusions and the associated weakly interacting particle models have been studied by many authors (See, for instance, \cite{Szn91, M96,  Mal03, KK10}.) One important difference in 
\eqref{E:NonlinearProcessPP} (and similarly \eqref{E:PKS_PP_N})
from these classical papers is that the right side of the first equation depends not only on $\mu_t$ but rather on the full past trajectory of the laws, i.e. $\{\mu_s: 0\le s \le t\}$.

The first main goal of this work is to rigorously establish that under suitable conditions, as $N$ becomes large, the mesoscopic model \eqref{E:PKS_PP_N} gives a good approximation for \eqref{E:NonlinearProcessPP} (and thus also for
\eqref{E:PKS_PP}), uniformly in time. Specifically, our results will give, under conditions, uniform in time convergence of  $\mu^N_t$
to $\mu_t$, in a suitable sense. Such a  result is important since it says in particular that the time asymptotic aggregation behavior of the particle system is well captured by the asymptotic density
function $u(t, \cdot)$ as $t\to \infty$. In general one would also like to know how well $\mu^N$ approximates $\mu$ for a fixed value of $N$. In order to address such questions, in our second result, under stronger integrability conditions,  we will provide uniform in time exponential concentration bounds that give estimates on rates of convergence of $\mu^N$ to $\mu$. 

A natural empirical approach for the study of long time properties of \eqref{E:PKS_PP_N} is through numerical simulations. For example, under the Neumann boundary condition, numerical simulations for  \eqref{E:PKS_PE_N} in a square typically demonstrate a separation of time scale: after an initial short time interval during which particles aggregates to form many crowded subpopulations, the subpopulations merge to form a stationary profile at a much slower time scale. See \cite{Fat13} and \cite[Figure 3.9]{SF07} for such simulation results.
Note however that the system cannot be simulated exactly and in practice one needs to do a suitable time discretization. For such simulations to form a reliable basis for mathematical intuition on the long time behavior, it is key that they approximate the system \eqref{E:PKS_PP_N} or the PDE \eqref{E:PKS_PP}, uniformly in time. 
We will show that under suitable conditions a natural discretization scheme for \eqref{E:PKS_PP_N} gives a uniform in time convergent approximation to the solution of \eqref{E:NonlinearProcessPP}  as $N\to\infty$ and as the discretization step size tends to zero. Our uniform in time numerical approximations offer qualitative insights for the long time dynamical behavior of such systems. 



\subsection{Existing results and some challenges}

One of the key challenges in the study of \eqref{E:PKS_PP_N} is that the $Nd$ dimensional process
$X^{(N)}=$ $(X^{1,N}, \cdots , X^{N,N})$ is not a Markov process since the right side of the first equation in \eqref{E:PKS_PP_N}
depends on the full past history of the empirical measure, i.e. $\{\mu^N_s\}_{0\le s \le t}$. In order to get a Markovian descriptor  one needs to consider the pair $(X^{(N)}, h_N)$ which is an infinite dimensional Markov process.
Similar difficulties arise in the study of \eqref{E:PKS_PP} where the form of the  coupling between $u$ and $h$ makes the analysis challenging.

These difficulties do not occur for the reduced parabolic-elliptic system \eqref{E:PKS_PE} obtained by formally letting $\gamma\to\infty$ in \eqref{E:PKS_PP}:
\begin{equation}\label{E:PKS_PE}
\left\{\begin{aligned}
\partial_t u(t,x)           &= \frac{1}{2}\,\Delta u(t,x) -\nabla\cdot \big(u(t,x)[\chi\,\nabla h(t,x)-\nabla V(x)]\big)    \\
0 &=  \frac{1}{2}\,\Delta h(t,x)  -\alpha h(t,x) + \beta \int_{\R^d}u(t,z)\,g(z-x)\,dz.
\end{aligned}\right.
\end{equation}
In the context of chemotaxis, the model in \eqref{E:PKS_PE} corresponds to a quasi-stationary hypothesis for the chemical $h$, that is, the chemical diffuses at a much faster time scale than the biological particles.
Equation \eqref{E:PKS_PE} is mathematically more tractable since here one can solve for $h$ explicitly in terms of $u$ and $g$
as
$$h=\beta G_{\alpha}*u,$$
where $*$ denotes the standard convolution operator, $G_{\alpha}(z)=\int_0^{\infty}e^{-\alpha t}P_tg(z)\,dt$ and $P_t$ is the standard heat semigroup, i.e. the semigroup generated by $\frac{1}{2}\,\Delta$. Using this expression for $h$, the system \eqref{E:PKS_PE} can be expressed as a single equation of the form
\begin{equation}\label{E:VW_PE}
\partial_t u= \frac{1}{2}\,\Delta u +\nabla\cdot(u\,[\nabla V-\chi\,\beta\,\nabla G_{\alpha} \,*\,u]). 
\end{equation}
Kinetic equations of the above form have been well studied in the literature \cite{Mck, Tam, Szn91, CMV03, CGM06, Mal03,BGV05}
where they are sometimes referred to as granular media equations because of their use in the modeling of granular flows
(cf. \cite{BCCP}). An interacting particle approximation for this equation takes the following simple form
\begin{equation}\label{E:PKS_PE_N}
dX^{(N)}_t = dB^{(N)}_t - \nabla\Phi_N(X^{(N)}_t)\,dt, 
\end{equation}
where $X^{(N)}_t = (X^{1,N}_t, \ldots, X^{N,N}_t)$,
$\{B^{(N)}_t\}$ is a standard Brownian motion in $\R^{dN}$ and for ${\bf x}=(x_1,\cdots,x_N)\in\R^{dN}$,
\begin{equation*}
\Phi_N({\bf x})\doteq\sum_{i=1}^N V(x_i)-\frac{\chi\beta}{2N}\sum_{i=1}^N\sum_{j=1}^NG_{\alpha}(x_i-x_j).
\end{equation*}
Note that in this case $X^N$ is a Markov process given as a $Nd$ dimensional diffusion with a gradient form drift.
Law of large number results  and  propagation of chaos properties for such models over a finite time interval that rigorously
connect the asymptotic behavior of  \eqref{E:PKS_PE_N} as $N\to \infty$ with the equation in \eqref{E:VW_PE} are classical and go back to the works of McKean\cite{Mck} and Sznitman\cite{Szn91}. In recent years there has also been significant progress in the study of the time asymptotic behavior of \eqref{E:PKS_PE_N} and \eqref{E:VW_PE}. Under suitable growth and convexity assumptions on $V$ and $G_{\alpha}$, \cite{Tam} studied the existence and local exponential stability of fixed points of \eqref{E:PKS_PE}
by a suitable construction of a Lyapunov function. Similar Lyapunov functions were used in \cite{CMV03, Mal03, CGM06} to establish a uniform in time propagation of chaos property and  convergence of $\mu^N_t(dx)$ to $u(t,x)dx$ along with uniform in time exponential concentration bounds. In the context of the model in \eqref{E:PKS_PE_N} and \eqref{E:VW_PE},  simple modifications of arguments in proofs of \cite[Theorem 1.3]{Mal03} and \cite[Theorem 3.1]{CGM06} imply the following results. 
Suppose 
\begin{equation}\label{E:A}
\<x-y,\,\nabla V(x)-\nabla V(y)\> \geq \lambda |x-y|^2\quad\text{for all }x,\,y\in \R^d
\end{equation} 
and 
\begin{equation}\label{E:lamPE}
\lambda>2\,d \,\beta\chi \frac{\|\Hess g\|_{\infty}}{\alpha}
\end{equation}
where $\|\Hess f\|_{\infty}\doteq\sup_{i,j}\sup_{x}|\partial_{x_i}\partial_{x_j}f(x)|$.
Then as $N\to\infty$,
\begin{equation}
\label{eq:eq821}
\sup_{t\geq 0} \mathcal{W}_2\Big(\mathcal{L}(X^{1,N}_t,\,X^{2,N}_t,\,\cdots,\,X^{k,N}_t),\;\big(u(t,x)dx\big)^{\otimes \,k}\Big) \longrightarrow 0
\end{equation}
for all positive integers $k$, where for $p\ge 1$, $\mathcal{W}_p$ is the Wasserstein-$p$ distance (see Section \ref{sec:cont}) on the space of probability measures on $\mathbb{R}^{dk}$ and $u$ is the solution to \eqref{E:VW_PE}. Under the same assumptions \eqref{E:A} and \eqref{E:lamPE}, a uniform concentration bound of the form 
$$
\sup_{t\geq 0} \mathbb{P} \, ( \mathcal{W}_1(\mu_t^N,\,u(t,x)dx) > \epsilon) \leq
C_1 (1+\epsilon^{-2})\; \exp \left( - C_2 \, N\, \epsilon^2 \right)
$$
is obtained in \cite[Theorem 2.12]{BGV05}, where $C_1,\,C_2\in(0,\infty)$ 
(the condition $\beta+2\gamma>0$ in \cite[Theorem 2.12]{BGV05} is implied by \eqref{E:lamPE}.)
The paper \cite{Mal03} proves
uniform in time weak convergence of empirical measures constructed from an implicit Euler discretization scheme for the Markovian system \eqref{E:PKS_PE_N} to the solution of  \eqref{E:VW_PE}. As remarked earlier,  uniform in time numerical approximations are useful for obtaining qualitative insights for the long time dynamical behavior of such systems.

Much less is known  for the  model \eqref{E:PKS_PP}--\eqref{E:PKS_PP_N}. For the classical parabolic-parabolic Patlak-Keller-Segel PDE  a global existence in the subcritical case (i.e the initial mass is less than $8\pi$) in $\R^2$ is established in \cite{CC08} and the corresponding uniqueness result is established in \cite{CM14}. 
We refer the reader to  references in \cite{CM14} for recent development of the parabolic-parabolic Patlak-Keller-Segel PDE.
None of these works consider particle approximations or long time behavior (however see \cite{CM14} for recent stability results in the plane in a quasi parabolic-elliptic regime.)
The goal of the current work is to develop the theory for the long time behavior of \eqref{E:PKS_PP}--\eqref{E:PKS_PP_N}, analogous to the one for parabolic-elliptic model described above. 
As noted earlier, our approach is inspired by the ideas developed in \cite{Tam, CMV03, Mal03, CGM06, BGV05}.
Our main contributions are as follows.

\subsection{Contributions of this work.}
\label{sec:cont}


In this work we identify conditions under which the  particle system \eqref{E:PKS_PP_N} converges to the  nonlinear process \eqref{E:NonlinearProcessPP} {\em uniformly} over the infinite time horizon and 
construct time stable numerical approximations for \eqref{E:PKS_PP_N} and \eqref{E:NonlinearProcessPP}.
More precisely, the main contributions of this paper are as follows.
\begin{enumerate}
	\item Under suitable conditions,  well-posedness of \eqref{E:NonlinearProcessPP} and \eqref{E:PKS_PP_N} is established in Propositions \ref{Wellpose_Chemotaxis_N} and \ref{Wellpose_PP}. 
	\item  Sufficient conditions for a uniform in time propagation of chaos (POC) property for \eqref{E:PKS_PP_N} are identified in   Theorem \ref{T:UniformPOC_PP}. This implies, under the same conditions, a uniform in time law of large numbers (LLN) for the empirical measures $\mu^N$ (Corollary \ref{cor:UniformPOC_PE_2_1}).
	\item  Under stronger integrability conditions, we establish uniform in time exponential concentration bounds for $\mu^N$ given in Theorem \ref{T:Concen_PP_longtime}. These bounds say that the probability of observing  deviation of $\mu^N_t$ from its LLN prediction $\mu_t$ is exponentially small, uniformly in $t$, as $N$ increases.
	\item An explicit Euler scheme for \eqref{E:PKS_PP_N} is constructed and it is shown that it converges to the solution of \eqref{E:PKS_PP_N} uniformly in time and in $N$(Theorem \ref{EScheme}). 
	Together with the  POC result in 2 this shows that the Euler scheme gives a uniform in time convergent approximation for the nonlinear process as $N\to \infty$ and step size goes to $0$ (Corollary \ref{cor:EScheme}).
\end{enumerate}
Our main condition for uniform in time results in 2,3,4 is Assumption \ref{A:A}. This assumption can be regarded as the analog of condition \eqref{E:lamPE} used in the study of \eqref{E:VW_PE}--\eqref{E:PKS_PE_N}.

The paper is organized as follows.
In Section 2, we present the  basic wellposedness results and introduce our main assumptions. Section \ref{sec:sec3} contains the main results of this work. Finally Section \ref{sec:sec4} is devoted to proofs.

\medskip

\noi {\bf Notation: }
For a Polish space (i.e. a complete separable metric space) $S$, $\mathcal{P}(S)$ denotes the space of all probability measures on $S$. This space is equipped with the topology of weak convergence. 
Distance on a metric space $S$ will be denoted as $d_S(\cdot, \cdot)$ and if $S$ is a  normed linear space $S$ the corresponding norm will be denoted as $\|\cdot\|_S$. If clear from the context $S$ will be suppressed from the notation. The space of continuous functions from an interval $I\subset [0,\infty)$ to $\R^d$ is denoted by $\mathcal{C}(I: \mathbb{R}^{d})$.
The space $\mathcal{C}_T\doteq \mathcal{C}([0,T]:\R^d)$ will be equipped with the usual uniform norm and the Fr\'{e}chet space $\clc \doteq \mathcal{C}([0,\infty):\R^d)$ will be equipped with the distance
$$d_{\clc}(x,y) \doteq \sum_{k=1}^{\infty}\frac{ \|x-y\|_{\clc_k}\wedge 1}{2^k}.$$
Given metric spaces $S_i$, $i= 1, \ldots k$, the distance on the space $S_1\times \cdots \times S_k$ is taken to be the  sum of the $k$ distances:
$$d_{S_1\times\cdots \times S_k}(x,y) \doteq \sum_{i=1}^k d_{S_i}(x_i, y_i), \; x = (x_1, \cdots x_k), y = (y_1, \cdots y_k).$$
The law of a $S$ valued random variable $X$  (an element
of  $\mathcal{P}(S)$), is denoted by $\mathcal{L}(X)$. A collection of $S$ valued random variables $\{X_{\alpha}\}$ is said to be  tight if their laws $\{\mathcal{L}(X_{\alpha})\}$ are tight in $\mathcal{P}(S)$. For a signed measure  
$\mu$ on $S$ and a $\mu$-integrable function $f:S \to \R$, we write
$\int f d\mu$ as $\langle f, \mu\rangle$.
For a polish space $S$, the  Wasserstein-$p$ distance on $\mathcal{P}(S)$ is defined as
\begin{equation}
\label{eq:eq909}
\mathcal{W}_{p}(\mu,\nu)  \doteq \Big(\inf_{\pi}\int\int d_S(x,y)^p\,d\pi(x,y)\Big)^{1/p},\end{equation}
where  the infimum is taken over all probability measures $\pi\in \mathcal{P}(S\times S)$ with marginals $\mu$ and $\nu$.
Let $\mathcal{P}_p(S)$ be the set of $\mu\in\mathcal{P}(S)$ having finite $p$-th moments where $p\in[1,\,\infty)$.
It is well known (cf. \cite[Definition 6.8 and Theorem 6.9]{V08}) that $\mathcal{W}_{p}$ metrizes the weak convergence in $\mathcal{P}_p(S)$.
For $p=1$, the Kantorovich-Rubenstein duality (cf. \cite[Remark 6.5]{V08}) says that 
for probability measures $\mu$ and $\nu$ which have finite first moments,
\begin{equation}\label{E:KR_Duality}
\mathcal{W}_{1}(\mu,\nu) =\sup_{f\in \mbox{Lip}_{1}(S)}\left|\langle f, (\mu - \nu)\rangle \right|,
\end{equation}
where $\mbox{Lip}_{1}(S)$ is the space of Lipschitz functions on $S$ whose Lipschitz constant is at most 1. 

Throughout, $(\Omega,\mathcal{F},\mathbb{P})$ will denote a probability space which is equipped with a
filtration $(\mathcal{F}_{t})$ satisfying the usual conditions.
The symbol $\E$ denotes the expectation with respect to the probability measure $\mathbb{P}$. For a stochastic process $X$ the notation $X_t$ and $X(t)$ will be used interchangeably. 

The space of all bounded continuous functions on $S$ is denoted by $\mathcal{C}_{b}(S)$. The supremum  of a function $f:S\to \mathbb{R}$ is denoted as $||f||_{\infty}\doteq\sup_{x\in S}|f(x)|$. 
Space of  functions with $k$ continuous (resp. continuous and bounded) derivatives will be denoted as $\clc^k(\R^d)$ 
(resp. $\clc^k_b(\R^d)$).
For  $f\in \clc^1_b(\R^d)$ and $g\in \clc^2_b(\R^d)$ we denote
\begin{eqnarray*}
	\|\nabla f\|_\infty=\sup_{x}\Big(\sum_{i=1}^d(\partial_{x_i}f)^2\Big)^{1/2} \quad \text{and}\quad
	\|\Hess g\|_{\infty}= \sup_{i,j}\sup_{x}|\partial_{x_i}\partial_{x_j}g(x)|.
\end{eqnarray*}



\section{Preliminaries and well-posedness}

Note that for a bounded function $g:\R^d\to \R$,  a  solution $h$ to \eqref{E:NonlinearProcessPP}  by the variation of constants formula  satisfies
\begin{equation}\label{E:h_PP}
h(t,x) = Q_{t}\,h_0\,(x) +\gamma\,\beta\int_0^t Q_{t-s}\Big( \int g(y-\cdot)\,d\mu_s(y)\Big)(x)\,ds,
\end{equation}
where $\{Q_t\}_{t\geq 0}$ is the semigroup for $f\mapsto \frac{\gamma}{2} \Delta f-\gamma\alpha f$. That is for suitable $\phi: \R^d \to \R$,
\begin{equation}\label{Def:Q_t}
Q_t\phi(x) \doteq \E[\phi(x+B_{\gamma t})e^{-\gamma\alpha t}],
\end{equation}
where $\{B_t\}$ is a standard $d$-dimensional Brownian motion. 
The equation in \eqref{E:h_PP} can be rewritten as follows.
For $m\in \mathcal{P}\big(\mathcal{C}([0,\infty):\R^d)\big)$, let
\begin{equation}\label{Def:Theta}
\Theta_t^{m}(x)\doteq \int_0^t Q_{t-s}\Big(\int g(y-\cdot)\,dm_s(y)\Big)(x)\,ds,\; x \in \R^d
\end{equation}
where $m_s \in \mathcal{P}(\R^d)$ is the marginal  of $m$ at time $s$, namely $m_s = m \circ (\pi_s)^{-1}$, where $\pi_s: \mathcal{C}([0,\infty):\R^d) \to \R^d$ is the projection map, $\pi_s(w)\doteq w(s)$.
Then \eqref{E:h_PP} is same as
\begin{equation}\label{E:h_PP_2}
h(t,x) = Q_{t}\,h_0\,(x) +\gamma\,\beta\,\Theta_t^{\mu}(x)
\end{equation}
where $\mu\in\mathcal{P}\Big(\mathcal{C}([0,\infty):\,\mathbb{R}^{d})\Big)$ is the probability law of $X=(X_t)_{t\geq 0}$ defined by the first equation in \eqref{E:NonlinearProcessPP}.
Similarly, the solution $h_N$ of  \eqref{E:PKS_PP_N} can be written as
\begin{equation}\label{E:h_PP_N}
h_N(t,x) =  Q_{t}\,h_0\,(x) +\gamma\,\beta\,\Theta_t^{\mu^N}(x),
\end{equation}
where $\mu^N\doteq  \frac{1}{N} \sum_{i=1}^N \delta_{X^{i,N}}$ 
is the corresponding empirical measure.

\subsection{Wellposedness}
This section gives the basic wellposedness results for equations  \eqref{E:NonlinearProcessPP} and \eqref{E:PKS_PP_N} under suitable conditions on the dispersal kernel $g$ and initial chemical field $h_0$. 
Lemma \ref{L:Properties_PPN} below gives a uniform boundedness and a uniform Lipschitz property for  $\nabla h$ and $\nabla h_N$. Its proof is straightforward but is included for completeness in
Section \ref{sec:wellposprf}.
\begin{lemma}\label{L:Properties_PPN}
	Suppose $g,\,h_0\in \clc^2_b(\R^d)$. Define, for $m\in \mathcal{P}\big(\mathcal{C}([0,\infty):\R^d)\big)$ and $(t,x)\in[0,\infty)\times\R^d$,
	\begin{equation}\label{E:h^mu}
	h^{m}(t,x) =  Q_{t}\,h_0\,(x) +\gamma\,\beta\,\Theta_t^{m}(x),
	\end{equation}
	where $\Theta_t^{m}$ is given by \eqref{Def:Theta}.
	Then there exists  $C \in(0,\infty)$ such that
	\begin{eqnarray*}
		\sup_{m}\sup_{t\geq 0}\sup_{x\in\R^d}|\nabla h^{m}(t,x)|& \leq& C \quad \text{and}\\
		\sup_{m}\sup_{t\geq 0} |\nabla h^{m}(t,x)-\nabla h^{m}(t,y)|&\leq& C\,|x-y|,\quad \text{for all } x,\,y\in\R^d,
	\end{eqnarray*}
	where the outside supremum is taken over all $m \in \mathcal{P}\big(\mathcal{C}([0,\infty):\R^d)\big)$.
\end{lemma}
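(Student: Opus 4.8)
The plan is to differentiate the explicit representation \eqref{E:h^mu} directly in the spatial variable, moving the gradient (and Hessian) past the semigroup $Q_t$, past the integration against $m_s$, and past the time integral defining $\Theta^m_t$. The two required bounds will then follow from the boundedness of the first and second derivatives of $g$ and $h_0$, together with the exponential factor $e^{-\gamma\alpha t}$ built into $Q_t$ via \eqref{Def:Q_t}, which produces the integrable kernel $e^{-\gamma\alpha(t-s)}$ in the $s$-integral. All constants will be uniform in $m$ because $g$ and $h_0$ do not depend on $m$ and each $m_s$ is a probability measure.

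Concretely, I would first fix $m\in \mathcal{P}\big(\mathcal{C}([0,\infty):\R^d)\big)$ and set $\phi^m_s(x)\doteq \int g(y-x)\,dm_s(y)$, the smoothed density appearing in \eqref{Def:Theta}. Since $g\in \clc^2_b(\R^d)$, a standard dominated-convergence argument (using that difference quotients of $g$ are bounded by the corresponding derivative sup-norms) shows $\phi^m_s\in\clc^2_b(\R^d)$ with $\nabla\phi^m_s(x)=-\int(\nabla g)(y-x)\,dm_s(y)$ and $\partial_{x_i}\partial_{x_j}\phi^m_s(x)=\int(\partial_{x_i}\partial_{x_j}g)(y-x)\,dm_s(y)$, hence $\|\nabla\phi^m_s\|_\infty\le\|\nabla g\|_\infty$ and $\|\Hess\phi^m_s\|_\infty\le\|\Hess g\|_\infty$ for every $s\ge 0$. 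Next, since $Q_t$ is convolution against the (smooth, rapidly decaying) Gaussian density of $B_{\gamma t}$ times $e^{-\gamma\alpha t}$, for $\psi\in\clc^1_b(\R^d)$ one may differentiate under the expectation in \eqref{Def:Q_t} to get $\nabla(Q_t\psi)=Q_t(\nabla\psi)$, so $\|\nabla Q_t\psi\|_\infty\le e^{-\gamma\alpha t}\|\nabla\psi\|_\infty$, and likewise $\|\Hess Q_t\psi\|_\infty\le e^{-\gamma\alpha t}\|\Hess\psi\|_\infty$ for $\psi\in\clc^2_b(\R^d)$. Applying this with $\psi=h_0$ controls the first term of \eqref{E:h^mu}. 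For the second term, the uniform-in-$s$ bounds on $\nabla\phi^m_s,\Hess\phi^m_s$ and weak continuity of $s\mapsto m_s$ justify differentiating $\Theta^m_t$ under the $ds$-integral, yielding $\nabla\Theta^m_t(x)=\int_0^t Q_{t-s}(\nabla\phi^m_s)(x)\,ds$ and the analogous formula for second derivatives, whence
\[
\|\nabla\Theta^m_t\|_\infty\le\|\nabla g\|_\infty\int_0^t e^{-\gamma\alpha(t-s)}\,ds\le\frac{\|\nabla g\|_\infty}{\gamma\alpha},\qquad \|\Hess\Theta^m_t\|_\infty\le\frac{\|\Hess g\|_\infty}{\gamma\alpha},
\]
uniformly in $t\ge 0$ and $m$.

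Combining the two contributions gives $\sup_m\sup_t\|\nabla h^m(t,\cdot)\|_\infty\le\|\nabla h_0\|_\infty+\beta\|\nabla g\|_\infty/\alpha$, which is the first assertion, and $\sup_m\sup_t\|\Hess h^m(t,\cdot)\|_\infty\le\|\Hess h_0\|_\infty+\beta\|\Hess g\|_\infty/\alpha$; the Lipschitz bound for $\nabla h^m(t,\cdot)$ then follows from the mean value inequality $|\nabla h^m(t,x)-\nabla h^m(t,y)|\le \big(\sup_z\|\Hess h^m(t,z)\|_{\mathrm{op}}\big)|x-y|\le d\,\|\Hess h^m(t,\cdot)\|_\infty|x-y|$, the factor $d$ coming from bounding the operator norm of a symmetric matrix by $d$ times its largest entry. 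Taking $C$ to be the maximum of these two constants proves the lemma. I expect no genuine obstacle here: the only mildly technical point is justifying the three interchanges of differentiation with integration (the expectation in $Q_t$, the integral $\int\cdot\,dm_s$, and the integral $\int_0^t\cdot\,ds$), each of which is a routine dominated-convergence argument using the uniform bounds on $\nabla g$, $\Hess g$ and the smoothness/decay of the Gaussian kernel.
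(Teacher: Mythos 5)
Your proposal is correct and follows essentially the same route as the paper: the paper's proof simply asserts the uniform bounds $|\nabla h^m(t,x)|\le e^{-\alpha t}\|\nabla h_0\|_\infty+\beta\|\nabla g\|_\infty/\alpha$ and the Lipschitz bound $d\big(e^{-\alpha t}\|\Hess h_0\|_\infty+\beta\|\Hess g\|_\infty/\alpha\big)|x_1-x_2|$, which are exactly what you derive by passing derivatives through $Q_t$, the $m_s$-integral and the $ds$-integral. Your write-up just makes the routine interchange-of-differentiation-and-integration steps explicit, which the paper leaves implicit.
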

Denote by $\Cstar$ the class of continuous functions $\zeta: [0,\infty)\times \R^d \to \R$ such that
for each $t\ge 0$, $\zeta(t, \cdot)$ is continuously differentiable and $\nabla \zeta(t, \cdot)$ is a bounded Lipschitz function. 
The space $\mathcal{C}^*([0,T]\times \R^d)$ is defined similarly. The above lemma shows that for an arbitrary
$m \in \mathcal{P}\big(\mathcal{C}([0,\infty):\R^d)\big)$,  $h^m \in \Cstar$. In Section \ref{sec:wellposprf}, using Lemma \ref{L:Properties_PPN} we prove the following proposition which gives the  
wellposedness of \eqref{E:PKS_PP_N}.

\begin{proposition}\label{Wellpose_Chemotaxis_N}
	Suppose $g,\,h_0\in \clc^2_b(\R^d)$, $V\in \clc^1(\R^d)$ and $\nabla V$ is  Lipschitz. 
	Let $\mu_0 \in \clp_2(\R^d)$ and $\xi_0^N=(\xi^{1,N}_0, \cdots, \xi^{N,N}_0)$ be a $\clf_0$ measurable square integrable $\R^{dN}$ valued random variable with probability law $\mu_0^{\otimes N}$. Then  the system of equations \eqref{E:PKS_PP_N} has a unique pathwise solution $(X^N,\,h_N) \in 
	\Crd \times \Cstar$  with $(X^N(0),\,h_N(0)) =(\xi_0^N,\,h_0)$.
\end{proposition}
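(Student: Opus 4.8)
The plan is to decouple the fixed-point structure of \eqref{E:PKS_PP_N} by exploiting the explicit representation \eqref{E:h_PP_N}. First I would observe that, given any candidate empirical measure $\mu^N = \frac1N\sum_{i=1}^N \delta_{X^{i,N}} \in \clp(\clc)$, the chemical field is \emph{determined} by the closed-form expression $h_N(t,x) = Q_t h_0(x) + \gamma\beta\,\Theta_t^{\mu^N}(x)$, which lies in $\Cstar$ by Lemma \ref{L:Properties_PPN}. Consequently the system \eqref{E:PKS_PP_N} is equivalent to the $\R^{dN}$-valued SDE
\begin{equation*}
dX^{i,N}_t = dB^i_t - \nabla V(X^{i,N}_t)\,dt + \chi\,\nabla h^{\mu^N}(t,X^{i,N}_t)\,dt, \quad i = 1,\dots,N,
\end{equation*}
with $X^N(0) = \xi_0^N$, where $\mu^N$ is the empirical measure of the path $X^N$ itself. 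So the task reduces to solving this self-referential SDE in $\Crd$ and then reading off $h_N$ from \eqref{E:h_PP_N}.

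The main step is a Picard-type fixed-point argument on path space, carried out on successive finite horizons $[0,T]$. I would set up the map $\Gamma$ on $\clp_2(\clc_T^{\otimes N})$ (or more concretely on the space of $\R^{dN}$-valued continuous processes with the relevant integrability) that sends a process $Y = (Y^1,\dots,Y^N)$ with empirical measure $\nu^N = \frac1N\sum_i \delta_{Y^i}$ to the solution $\Gamma(Y) = X$ of the \emph{non-self-referential} SDE obtained by freezing the drift's measure-dependence at $\nu^N$:
\begin{equation*}
dX^{i}_t = dB^i_t - \nabla V(X^{i}_t)\,dt + \chi\,\nabla h^{\nu^N}(t,X^{i}_t)\,dt, \quad X^i(0) = \xi_0^{i,N}.
\end{equation*}
This frozen-drift SDE has a unique strong solution because $\nabla V$ is Lipschitz by hypothesis and $x \mapsto \nabla h^{\nu^N}(t,x)$ is Lipschitz uniformly in $t$ and in $\nu^N$ by Lemma \ref{L:Properties_PPN}; standard existence/uniqueness theory for SDEs with Lipschitz (and linearly growing) coefficients applies, and square-integrability of the solution on $[0,T]$ follows from that of $\xi_0^N$ via Gronwall. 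To show $\Gamma$ is a contraction on $[0,T]$ for $T$ small, I would estimate $\E\big[\sup_{s\le t}|\Gamma(Y)^i_s - \Gamma(Y')^i_s|^2\big]$ using Burkholder–Davis–Gundy (there is no martingale difference here, the Brownian motions are the same, so only the drift terms contribute), the Lipschitz bound on $\nabla V$, and crucially the estimate
\begin{equation*}
|\nabla h^{\nu^N}(t,z) - \nabla h^{\tilde\nu^N}(t,z)| \le C'\,\clw_1(\nu^N_t, \tilde\nu^N_t) \le \frac{C'}{N}\sum_{j=1}^N |Y^j_t - \tilde Y^j_t|,
\end{equation*}
which follows by differentiating \eqref{Def:Theta} in $x$, using $g \in \clc^2_b$ and the Kantorovich–Rubinstein duality \eqref{E:KR_Duality}. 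Gronwall's inequality then yields a contraction constant of order $CT e^{CT}$, giving a unique fixed point on $[0,T_0]$ for $T_0$ small enough depending only on the Lipschitz constants (hence independent of the starting point), and patching over consecutive intervals $[kT_0,(k+1)T_0]$ gives the unique pathwise solution $X^N \in \Crd$ on all of $[0,\infty)$. Setting $h_N \doteq h^{\mu^N}$ with $\mu^N$ the empirical measure of this $X^N$ gives, by Lemma \ref{L:Properties_PPN}, $h_N \in \Cstar$, and $(X^N,h_N)$ solves \eqref{E:PKS_PP_N} with the prescribed initial data; uniqueness of the pair follows from uniqueness of $X^N$ together with the fact that $h_N$ is uniquely determined by $\mu^N$ through \eqref{E:h_PP_N}.

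The only mild subtlety — and the step I would be most careful with — is making sure the Lipschitz estimate on $\nu^N \mapsto \nabla h^{\nu^N}(t,\cdot)$ is genuinely uniform in $t \ge 0$, since $\Theta_t^{\nu^N}$ involves a time integral over $[0,t]$ of a decaying semigroup $Q_{t-s}$; the exponential factor $e^{-\gamma\alpha(t-s)}$ in \eqref{Def:Q_t} makes $\int_0^t e^{-\gamma\alpha(t-s)}\,ds$ bounded uniformly in $t$, which is exactly what Lemma \ref{L:Properties_PPN} already encapsulates, so in fact this is handed to us. Beyond that, the argument is routine SDE theory, and no blow-up can occur because the drift is globally Lipschitz with at most linear growth (the $V$-part may grow, but monotonicity/Lipschitz of $\nabla V$ controls it), so the local solution extends to a global one without any a priori moment bound beyond what Gronwall provides on each finite interval.
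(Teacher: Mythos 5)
Your overall strategy---reduce \eqref{E:PKS_PP_N} to a closed, self-referential SDE for $X^N$ via the representation \eqref{E:h_PP_N}, then run a fixed-point/Gronwall argument based on the uniform Lipschitz bounds of Lemma \ref{L:Properties_PPN}---is essentially the paper's: the paper proves uniqueness by a direct Gronwall estimate on the difference of two solutions and existence by Picard iteration on the pair $(X^{(k)},h^{(k)})$ over an arbitrary finite horizon (with factorial decay, so no small-time restriction or patching is needed), whereas you package the same estimates as a small-time contraction for a frozen-measure map $\Gamma$; either variant is workable.

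There is, however, one stated estimate that is false as written and must be repaired: you bound $|\nabla h^{\nu^N}(t,z)-\nabla h^{\tilde\nu^N}(t,z)|$ by $C'\,\mathcal{W}_1(\nu^N_t,\tilde\nu^N_t)$, i.e.\ by the distance of the time-$t$ marginals only. By \eqref{Def:Theta}, $h^{m}(t,\cdot)$ depends on the whole family of marginals $\{m_s\}_{0\le s\le t}$; if two driving processes agree at time $t$ but differ on $[0,t)$, your right-hand side vanishes while the left-hand side need not. This is exactly the non-Markovian feature of the model that the paper emphasizes. The correct bound, obtained by differentiating \eqref{Def:Theta} and using $g\in\clc^2_b(\R^d)$ together with the Kantorovich--Rubenstein duality \eqref{E:KR_Duality}, is
\begin{equation*}
|\nabla h^{\nu^N}(t,z)-\nabla h^{\tilde\nu^N}(t,z)|\;\le\; C\int_0^t e^{-\alpha(t-s)}\,\mathcal{W}_1(\nu^N_s,\tilde\nu^N_s)\,ds\;\le\;\frac{C}{N}\sum_{j=1}^N\int_0^t e^{-\alpha(t-s)}\,|Y^j_s-\tilde Y^j_s|\,ds,
\end{equation*}
which is precisely the estimate the paper derives in \eqref{E:2} for the empirical measures of two candidate solutions. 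With this history-dependent bound in place of yours, your Gronwall/contraction computation goes through unchanged (indeed more easily, since the additional time integral only improves the contraction constant), so the defect is local to that one inequality rather than structural.
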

With another application of Lemma \ref{L:Properties_PPN} and  straightforward modifications of  classical fixed point arguments (cf. \cite{Szn91}), we prove the following proposition in Section \ref{sec:wellposprf} as well.
\begin{proposition}\label{Wellpose_PP}
	Suppose $g,\,h_0\in \clc^2_b(\R^d)$, $V\in \clc^1(\R^d)$ and $\nabla V$ is  Lipschitz. Let $\xi_0$ be a $\clf_0$ measurable  $\R^d$ valued random variable with probability law $\mu_0 \in \clp_2(\R^d)$.
	Then  equation \eqref{E:NonlinearProcessPP} has a unique pathwise solution $(X,\,h) \in \mathcal{C}([0,\infty):\R^d) \times \Cstar$ with $(X(0),\,h(0)) =(\xi_0,\,h_0)$.
\end{proposition}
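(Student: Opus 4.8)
\textbf{Proof plan for Proposition \ref{Wellpose_PP}.}

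The plan is to set up a Picard-type fixed point argument on the space $\mathcal{P}\big(\mathcal{C}([0,\infty):\R^d)\big)$, following the classical scheme of Sznitman but paying attention to the fact that the drift at time $t$ depends on the whole past of the law $\mu$ through the chemical field $h^\mu$. First I would define a map $\Gamma$ on $\mathcal{P}_2\big(\mathcal{C}([0,\infty):\R^d)\big)$ as follows: given $m$ in this space, use Lemma \ref{L:Properties_PPN} to see that $h^m \in \Cstar$, so that $x \mapsto -\nabla V(x) + \chi \nabla h^m(t,x)$ is a bounded-in-$x$ (at each $t$) and globally Lipschitz-in-$x$ coefficient; hence the SDE
\begin{equation*}
dY_t = dB_t - \nabla V(Y_t)\,dt + \chi\,\nabla h^m(t,Y_t)\,dt, \qquad Y_0 = \xi_0,
\end{equation*}
has a unique strong solution $Y = Y^m$ on $[0,\infty)$ by standard existence/uniqueness theory for SDEs with locally Lipschitz (indeed globally Lipschitz here, uniformly in $t$) coefficients; one needs the linear growth of $\nabla V$, which follows from $\nabla V$ Lipschitz, to rule out explosion, and square integrability of $Y_t$ on compacts follows from $\mu_0 \in \clp_2(\R^d)$ via Gr\"onwall. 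Set $\Gamma(m) \doteq \mathcal{L}(Y^m) \in \mathcal{P}\big(\mathcal{C}([0,\infty):\R^d)\big)$. A pathwise solution $(X,h)$ of \eqref{E:NonlinearProcessPP} with the prescribed initial data corresponds exactly to a fixed point $\mu = \Gamma(\mu)$, together with $h = h^\mu$.

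Next I would prove that $\Gamma$ has a unique fixed point by showing it is a contraction on each finite time horizon. Fix $T>0$; for $m, \tilde m \in \mathcal{P}_2$, couple $Y^m$ and $Y^{\tilde m}$ by using the same Brownian motion $B$ and the same initial condition $\xi_0$, and estimate $\E\big[\sup_{s\le t}|Y^m_s - Y^{\tilde m}_s|^2\big]$ for $t \le T$. The difference of drifts splits into a term $\nabla V(Y^m_s) - \nabla V(Y^{\tilde m}_s)$, controlled by the Lipschitz constant of $\nabla V$ times $|Y^m_s - Y^{\tilde m}_s|$, and the term $\chi(\nabla h^m(s,Y^m_s) - \nabla h^{\tilde m}(s,Y^{\tilde m}_s))$, which I split further as $\chi(\nabla h^m(s,Y^m_s) - \nabla h^m(s,Y^{\tilde m}_s)) + \chi(\nabla h^m(s,Y^{\tilde m}_s) - \nabla h^{\tilde m}(s,Y^{\tilde m}_s))$. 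The first piece is Lipschitz in space uniformly in $s$ and $m$ by Lemma \ref{L:Properties_PPN}; the second piece requires an estimate of $\|\nabla h^m(s,\cdot) - \nabla h^{\tilde m}(s,\cdot)\|_\infty$ in terms of the distance between $m$ and $\tilde m$ restricted to $[0,s]$. From the explicit formula \eqref{E:h^mu}--\eqref{Def:Theta}, $\nabla h^m(s,x) - \nabla h^{\tilde m}(s,x) = \gamma\beta \int_0^s Q_{s-r}\big(\int \nabla g(y-\cdot)\, d(m_r - \tilde m_r)(y)\big)(x)\, dr$ (differentiating under the integral, legitimate since $g \in \clc^2_b$), and since $\nabla g$ is bounded Lipschitz this is bounded by $C\int_0^s \mathcal{W}_1(m_r, \tilde m_r)\,dr \le C\int_0^s \mathcal{W}_{1,\mathcal{C}_r}(m,\tilde m)\, dr$ where $\mathcal{W}_{1,\mathcal{C}_r}$ is the Wasserstein distance over path space up to time $r$ (using that projection is $1$-Lipschitz, and the Kantorovich--Rubinstein duality \eqref{E:KR_Duality}). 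Feeding these bounds into Gr\"onwall on $[0,T]$ yields
\begin{equation*}
\mathcal{W}_{2,\mathcal{C}_T}\big(\Gamma(m),\Gamma(\tilde m)\big)^2 \le C_T \int_0^T \mathcal{W}_{2,\mathcal{C}_r}(m,\tilde m)^2\, dr,
\end{equation*}
and iterating this inequality (a standard Picard iteration estimate for Volterra-type inequalities) gives that some power $\Gamma^{(n)}$ is a strict contraction in $\mathcal{W}_{2,\mathcal{C}_T}$; hence $\Gamma$ has a unique fixed point $\mu^{(T)}$ in $\mathcal{P}_2(\mathcal{C}_T)$. Uniqueness over successive finite horizons and consistency (the restriction of the $[0,T']$-fixed point to $[0,T]$ is the $[0,T]$-fixed point, by uniqueness) let me patch these together into a unique $\mu \in \mathcal{P}_2\big(\mathcal{C}([0,\infty):\R^d)\big)$. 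Setting $h \doteq h^\mu$, which lies in $\Cstar$ by Lemma \ref{L:Properties_PPN}, and $X \doteq Y^\mu$ gives the claimed unique pathwise solution $(X,h)$.

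The main obstacle is not any single deep fact but rather handling the history-dependence cleanly: because $\nabla h^m(s,\cdot)$ depends on $\{m_r : r \le s\}$ rather than just on $m_s$, the contraction estimate is of Volterra type over path space, and one must be careful to phrase everything in terms of the path-space Wasserstein distances $\mathcal{W}_{p,\mathcal{C}_T}$ and to verify that the time-integrated Gr\"onwall argument still closes (it does, since the kernel $Q_{s-r}$ is a contraction on $L^\infty$ and the extra time integral only improves the iteration). A secondary technical point is justifying differentiation of $\Theta^m_t$ under the integral sign and interchanging $\nabla$ with the semigroup $Q_{s-r}$ and with the $m_r$-integral; this is routine given $g \in \clc^2_b(\R^d)$ and the Gaussian form of $Q_t$, and is presumably what Lemma \ref{L:Properties_PPN}'s proof already records. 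Finally one should note no growth assumption beyond Lipschitzness of $\nabla V$ is needed to preclude explosion, since the $\nabla h$ contribution to the drift is globally bounded uniformly in time.
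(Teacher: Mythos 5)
Your proposal is correct and follows essentially the same route as the paper: a Sznitman-type fixed point argument on path-space laws over finite horizons, using Lemma \ref{L:Properties_PPN} for well-posedness of the frozen-drift SDE, a synchronous coupling, the Lipschitzness of $\nabla g$ (equivalently a Kantorovich--Rubinstein/coupling bound) to control $\nabla h^{m}-\nabla h^{\tilde m}$ by the time-integrated Wasserstein distance between the laws, and a Gr\"onwall/Volterra iteration yielding the contraction. The only cosmetic difference is that the paper phrases the contraction in the Wasserstein-$1$ distance on $\mathcal{P}(\mathcal{C}^t)$ rather than your Wasserstein-$2$ formulation, which changes nothing essential.
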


\subsection{Assumptions}

The following will be our standing assumptions.
\begin{itemize} 
	\item
	$\gamma=1$  and  $\int_{\R^d} g(x)\,dx=1$.
	The second assumption can be made without loss of generality by modifying the value of $\beta$ whereas the first assumption is for notational convenience; the proofs for a general $\gamma$ follows similarly.
	
	\item The functions
	$g,\,h_0\in \clc^2_b(\R^d)$.
	
	\item The confinement potential $V \in \clc^1(\R^d)$, is symmetric, $V(0)=0$ and $\nabla V$ is Lipschitz.
	
	\item The initial measure $\mu_0 \in \clp_2(\R^d)$.
\end{itemize}
The above assumptions will be used without further comment. Note that from the last assumption it follows that
$\nabla V(0)=0$.

In addition, for several results the following convexity assumption
will be made. This condition  plays an analogous role in the  study of the long-time properties of the parabolic-parabolic system  as  condition \eqref{E:lamPE} for the parabolic-elliptic system.  
Let
\begin{equation}
v_* \doteq \inf_{x\neq y} \frac{\langle x-y, \nabla V(x) - \nabla V(y)\rangle}{|x-y|^2}.
\label{eq:eq951}
\end{equation}
Note that since $\nabla V$ is Lipschitz, $|v_*| \le L_{\nabla V}$ where latter is the 
the Lipschitz constant of $\nabla V$. 
Let
$$\lambda \doteq \left(\|\Hess h_0\|_\infty + \frac{2\beta \|\Hess g\|_{\infty}}{\alpha}\right)\chi d.$$
\begin{assumption}\label{A:A}
	The confinement potential  $V$ is such that
	\begin{equation}
	v_*>\lambda. \label{E:A1}
	\end{equation}
\end{assumption}

A prototypical example of a $V$ that satisfies Assumption \ref{A:A} is $V(x)=\<x,\,Ax\>/2$ where $A$ is a positive definite $d\times d$ matrix with spectrum bounded from below by $\lambda$. 
\section{Main results}
\label{sec:sec3}
\subsection{Propagation of chaos}\label{subsectionPOC}

A standard approach to proving POC (see, for instance \cite{Szn91, Mal03, CGM06}) is by a coupling method.
Let $\{B^i\}_{i=1}^N$ and $\{\xi_0^{i,N}\}_{i=1}^N$ be  collections of  independent standard $d$-dimensional $\{\clf_t\}$-Brownian motions and $\clf_0$ measurable i.i.d. square integrable random variables with probability law 
$\mu_0 \in \clp_2(\R^d)$, respectively.
Fix  $h_0 \in \clc_b^2(\R^d)$. 
We construct  coupled systems $\{X^{i,N}\}_{i=1}^N$ and $\{\bar{X}^{i}\}_{i=1}^N$ of $d$-dimensional continuous stochastic processes in such a way that
\begin{itemize} 
	\item $\bar{X}^{i}_0=X^{i,N}_0= \xi_0^{i,N}$ for all $i= 1, \cdots N$.
	\item $X^N \doteq (X^{1,N}, \ldots, X^{N,N})$ is the solution to \eqref{E:PKS_PP_N} with driving Brownian motions $\{B^i\}$
	and for each $i=1, \cdots N$, $\bar{X}^i$ is the solution to \eqref{E:NonlinearProcessPP} driven by the  Brownian motion $B^i$. 
\end{itemize}

Using the above coupling we establish the following POC for any finite time horizon. Note this result does not require the convexity assumption (i.e. Assumption \ref{A:A}). 
\begin{theorem}\label{T:POC_PP}
	For each $T\geq 0$, there exists $C_T\in (0,\infty)$ such that
	\begin{equation*}
	\E\Big[\, \sup_{t\in[0,T]}|X^{i,N}_t - \bar{X}_t^i|^2  \,\Big] \le \frac{C_T}{N}.
	\end{equation*}
\end{theorem}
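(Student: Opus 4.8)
The plan is to carry out a standard synchronous-coupling (Sznitman-type) argument over a finite time horizon, using the boundedness and Lipschitz estimates on $\nabla h$ supplied by Lemma~\ref{L:Properties_PPN}. First I would write, for each $i$ and $t \le T$, the difference
\[
X^{i,N}_t - \bar X^i_t = \int_0^t \big[\nabla V(\bar X^i_s) - \nabla V(X^{i,N}_s)\big]\,ds
+ \chi \int_0^t \big[\nabla h_N(s, X^{i,N}_s) - \nabla h(s, \bar X^i_s)\big]\,ds,
\]
noting that the Brownian terms cancel because of the coupling. Then I would split the $\nabla h$ difference into two pieces: $\nabla h_N(s,X^{i,N}_s) - \nabla h(s,X^{i,N}_s)$, which is controlled by $\|\nabla h_N(s,\cdot) - \nabla h(s,\cdot)\|_\infty$, and $\nabla h(s,X^{i,N}_s) - \nabla h(s,\bar X^i_s)$, which is bounded by $C|X^{i,N}_s - \bar X^i_s|$ via the uniform Lipschitz bound of Lemma~\ref{L:Properties_PPN}. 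The $\nabla V$ term is handled by its Lipschitz constant. Using $h_N = Q_t h_0 + \beta\,\Theta_t^{\mu^N}$ and $h = Q_t h_0 + \beta\,\Theta_t^{\mu}$ (with $\gamma = 1$), the first piece reduces to estimating $\nabla\Theta^{\mu^N}_s - \nabla\Theta^{\mu}_s$ in sup norm; since $\Theta$ involves convolution against $Q_{s-r}$ and $g \in \clc^2_b$, differentiating under the integral moves the derivative onto $\nabla g$, and one gets a bound of the form $C\int_0^s \mathcal{W}_1(\mu^N_r, \mu_r)\,dr$ by Kantorovich--Rubinstein duality (using that $x \mapsto \nabla g(y - x)$ has Lipschitz constant $\|\Hess g\|_\infty$).

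Next I would bound $\mathcal{W}_1(\mu^N_r, \mu_r)$ by comparing through the \emph{empirical measure of the coupled nonlinear particles} $\bar\mu^N_r \doteq \frac1N\sum_j \delta_{\bar X^j_r}$: by the triangle inequality $\mathcal{W}_1(\mu^N_r,\mu_r) \le \mathcal{W}_1(\mu^N_r,\bar\mu^N_r) + \mathcal{W}_1(\bar\mu^N_r,\mu_r)$. The first term is at most $\frac1N\sum_j |X^{j,N}_r - \bar X^j_r|$ by using the obvious coupling; the second term is a genuine law of large numbers fluctuation — $\bar X^j$ are i.i.d. with law $\mu$ — and satisfies $\E[\mathcal{W}_1(\bar\mu^N_r,\mu_r)] \le C/\sqrt{N}$ (or, after squaring and using a standard rate-of-convergence estimate for empirical measures of i.i.d. samples with finite second moment, $\E[\mathcal{W}_1(\bar\mu^N_r,\mu_r)^2] \le C/N$, possibly with a dimension-dependent logarithmic or polynomial factor that one absorbs; alternatively one obtains $C_T/N$ directly after integrating in time and using exchangeability). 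Assembling these, setting $\phi^N(t) \doteq \E[\sup_{s \le t}|X^{i,N}_s - \bar X^i_s|^2]$ (which is independent of $i$ by exchangeability), and applying Cauchy--Schwarz and Jensen to the time integrals, I would arrive at an inequality of the form
\[
\phi^N(t) \le C_T \int_0^t \phi^N(s)\,ds + \frac{C_T}{N},
\]
and conclude by Gr\"onwall's lemma that $\phi^N(T) \le \frac{C_T}{N}$ (with $C_T$ growing exponentially in $T$, which is exactly why this argument is not uniform in time and Assumption~\ref{A:A} is later needed).

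The main obstacle is the control of the interaction error through the chemical field: one must verify carefully that $\|\nabla h_N(s,\cdot) - \nabla h(s,\cdot)\|_\infty \le C\int_0^s \mathcal{W}_1(\mu^N_r,\mu_r)\,dr$, i.e. that the history-dependence of $h$ on the whole past of the empirical measure only produces a \emph{time-integrated} (hence, on a finite interval, harmless) error rather than a pointwise-in-time one. This is where the smoothing by the semigroup $Q_{t-s}$ and the regularity $g \in \clc^2_b$ are essential. The remaining ingredient — the $O(1/\sqrt N)$ rate for $\mathcal{W}_1(\bar\mu^N_r,\mu_r)$ of i.i.d. empirical measures — is classical, though one should be mindful of the square-integrability hypothesis $\mu_0 \in \clp_2(\R^d)$ (propagated to $\mu_t$ via the well-posedness Proposition~\ref{Wellpose_PP} and a moment estimate) that makes it applicable; everything else (the $\nabla V$ and Lipschitz-$\nabla h$ terms, Gr\"onwall) is routine.
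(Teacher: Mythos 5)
Your overall coupling-plus-Gr\"onwall architecture matches the paper's, but there is a genuine gap at the one step that produces the rate: you bound the law-of-large-numbers fluctuation by $\E[\mathcal{W}_1(\bar\mu^N_r,\mu_r)]\le C/\sqrt N$ (or $\E[\mathcal{W}_1^2]\le C/N$ ``with a dimension-dependent factor that one absorbs''). This is false for $d\ge 3$ and cannot be absorbed: by Fournier--Guillin type results the empirical $\mathcal{W}_1$ rate for i.i.d.\ samples in $\R^d$ is of order $N^{-1/d}$ (with a logarithm in $d=2$), even for compactly supported laws, and there is a matching lower bound. So the inequality you derive would only yield $\phi^N(T)\le C_T N^{-2/d}$, not the claimed $C_T/N$. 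The issue is that passing to $\|\nabla h_N(s,\cdot)-\nabla h(s,\cdot)\|_\infty$ and then to $\mathcal{W}_1(\mu^N_r,\mu_r)$ takes a supremum over all $1$-Lipschitz test functions, which is exactly where the curse of dimensionality enters.

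The paper avoids this by never measuring the fluctuation in Wasserstein distance in this proof. After the same splitting through the empirical measure $\nu^N_s=\frac1N\sum_j\delta_{\bar X^j_s}$ of the coupled nonlinear particles, the fluctuation term is the empirical measure integrated against the \emph{single} bounded function $y\mapsto \nabla P_{t-s}g(y-\bar X^i_t)$; writing $a^{i,j}(s,t)=\nabla P_{t-s}g(\bar X^j_s-\bar X^i_t)-\int\nabla P_{t-s}g(y-\bar X^i_t)\,d\mu_s(y)$, the cross terms satisfy $\E[a^{i,j}a^{i,k}]=0$ for $j\neq k$ (condition on $\bar X^i$), so a direct variance computation gives
\begin{equation*}
\E\Big|\int \nabla P_{t-s}g(y-\bar X^i_t)\,(\nu^N_s-\mu_s)(dy)\Big|^2\le \frac{2\|\nabla g\|_\infty^2}{N},
\end{equation*}
i.e.\ an $O(1/N)$ second-moment bound with no dimension penalty. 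With this, one gets (after Cauchy--Schwarz) an inequality of the form $\vartheta(T_1)\le \kappa_T\int_0^{T_1}\big(\vartheta(t)+N^{-1/2}\sqrt{\vartheta(t)}\big)dt$ for $\vartheta(t)=\frac1N\sum_i\E\sup_{s\le t}|\Delta^i_s|^2$, closed by Bihari's lemma (or by Young's inequality and ordinary Gr\"onwall), yielding $C_T/N$. To repair your argument you should replace the $\mathcal{W}_1$ control of the i.i.d.\ fluctuation by this variance/orthogonality bound tied to the specific test function $\nabla P_{t-s}g(\cdot-\bar X^i_t)$; the Wasserstein route is reserved in the paper for the concentration results, where the dimension-dependent restriction $N\gtrsim \epsilon^{-(d'+2)}$ makes its cost explicit. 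The rest of your proposal (Lipschitz handling of $\nabla V$ and $\nabla h$, the coupling bound $\mathcal{W}_1(\mu^N_r,\nu^N_r)\le\frac1N\sum_j|X^{j,N}_r-\bar X^j_r|$, exchangeability, Gr\"onwall) is sound.
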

As an immediate consequence of this result we have the following result on asymptotic mutual independence of
$(X^{1,N}, \ldots, X^{k,N})$ for each fixed $k$ and the convergence of each $X^{i,N}$ to $\bar X^1$ (cf. \cite{Szn91}).
The result in particular says that $\mathcal{L}(X^{1,N},\,\cdots,\,X^{N,N})$ is $\mathcal{L}(\bar{X})$-chaotic in the
terminology of \cite{Szn91}.
\begin{corollary}\label{cor:UniformPOC_PE_1_1}
	As $N\to\infty$, we have
	\begin{equation*}
	\mathcal{W}_2\Big(\mathcal{L}(X^{1,N},\,X^{2,N},\,\cdots,\,X^{k,N}),\;\mathcal{L}(\bar{X}^1)^{\otimes \,k}\Big) \longrightarrow 0
	\end{equation*}
	for all  $k \in \mathbb{N}$, where $\mathcal{W}_2$ is the Wasserstein-2 distance on $\mathcal{P}\left(\clc^k\right)$.
\end{corollary}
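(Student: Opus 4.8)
The plan is to deduce the corollary directly from the pathwise second-moment estimate of Theorem \ref{T:POC_PP}, using the coupling $\{(X^{i,N},\bar X^i)\}$ itself as a coupling of the two laws being compared. Only two small points require attention: that $(\bar X^1,\dots,\bar X^k)$ is genuinely an i.i.d.\ family with common law $\mathcal{L}(\bar X^1)$, and that the finite-horizon bound of Theorem \ref{T:POC_PP} actually controls the Fr\'echet distance $d_{\clc}$ on $\mathcal{C}([0,\infty):\R^d)$.

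First I would check independence of the nonlinear processes. In \eqref{E:NonlinearProcessPP} the chemical field $h$ is a \emph{deterministic} function of $(t,x)$, determined via \eqref{E:h_PP_2} by the deterministic flow of marginals $(\mu_s)_{s\ge 0}$, and by Proposition \ref{Wellpose_PP} both $h$ and $\mu=\mathcal{L}(\bar X)$ are unique. Hence, with this fixed $h$, each $\bar X^i$ is the unique strong solution of
\[
dY_t = dB^i_t - \nabla V(Y_t)\,dt + \chi\,\nabla h(t,Y_t)\,dt,\qquad Y_0=\xi^{i,N}_0,
\]
an SDE whose drift is (by the standing assumptions and Lemma \ref{L:Properties_PPN}) measurable in $t$ and globally Lipschitz in $x$, uniformly in $t$. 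By pathwise uniqueness and the Yamada--Watanabe theorem there is a single measurable map $F:\R^d\times\clc\to\clc$, independent of $i$, with $\bar X^i=F(\xi^{i,N}_0,B^i)$. Since the pairs $\{(\xi^{i,N}_0,B^i)\}_{i=1}^N$ are i.i.d., so are the $\bar X^i$, and therefore $\mathcal{L}(\bar X^1,\dots,\bar X^k)=\mathcal{L}(\bar X^1)^{\otimes k}$.

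Next, the random vector $\big((X^{1,N},\bar X^1),\dots,(X^{k,N},\bar X^k)\big)$ is a coupling of $\mathcal{L}(X^{1,N},\dots,X^{k,N})$ and $\mathcal{L}(\bar X^1)^{\otimes k}$ on $\clc^k\times\clc^k$. Since the metric on $\clc^k$ is $\sum_{i=1}^k d_{\clc}(\cdot_i,\cdot_i)$, the definition \eqref{eq:eq909} of $\mathcal{W}_2$ together with Cauchy--Schwarz gives
\[
\mathcal{W}_2^2\Big(\mathcal{L}(X^{1,N},\dots,X^{k,N}),\,\mathcal{L}(\bar X^1)^{\otimes k}\Big)
\le \E\Big[\Big(\textstyle\sum_{i=1}^k d_{\clc}(X^{i,N},\bar X^i)\Big)^2\Big]
\le k\sum_{i=1}^k \E\big[d_{\clc}(X^{i,N},\bar X^i)^2\big].
\]
It then remains to show each summand vanishes. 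Splitting the series defining $d_{\clc}$ at level $M$, using $a\wedge1\le a$ on the first $M$ terms and $a\wedge1\le1$ on the tail, one gets $d_{\clc}(X^{i,N},\bar X^i)\le\sum_{m=1}^M 2^{-m}\|X^{i,N}-\bar X^i\|_{\mathcal{C}([0,m]:\R^d)}+2^{-M}$; taking $L^2$-norms, applying Minkowski's inequality and the bound $\E[\|X^{i,N}-\bar X^i\|^2_{\mathcal{C}([0,m]:\R^d)}]\le C_m/N$ of Theorem \ref{T:POC_PP} yields
\[
\big(\E[d_{\clc}(X^{i,N},\bar X^i)^2]\big)^{1/2}\le N^{-1/2}\sum_{m=1}^M 2^{-m}\sqrt{C_m}+2^{-M}.
\]
Letting $N\to\infty$ with $M$ fixed and then $M\to\infty$ completes the proof. (Since $d_{\clc}\le1$, all measures involved lie in $\mathcal{P}_2$, so $\mathcal{W}_2$ is finite and no integrability issue arises.)

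As for the main obstacle: there is essentially none of substance at this stage. The only care needed is in the two flagged points—justifying independence of the $\bar X^i$ (which hinges on $h$ being deterministic, so that \eqref{E:NonlinearProcessPP} is, once $h$ is frozen, an SDE with deterministic globally Lipschitz coefficients to which strong uniqueness applies) and converting the $T$-horizon estimate into a bound on the Fr\'echet metric via the truncation above. All the real work is contained in Theorem \ref{T:POC_PP}.
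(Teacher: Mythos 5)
Your proposal is correct and follows essentially the same route as the paper: bound $\mathcal{W}_2$ by the explicit coupling $\{(X^{i,N},\bar X^i)\}$ together with exchangeability, reduce to $\E\,d_{\clc}(X^{1,N},\bar X^1)^2$, and send this to zero via Theorem \ref{T:POC_PP} and the i.i.d.\ property of the $\bar X^i$. The paper treats the independence of the $\bar X^i$ and the conversion of the finite-horizon bounds to the Fr\'echet metric as immediate, whereas you spell them out; these elaborations are accurate and harmless.
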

\begin{pf}
	From the definition of the Wasserstein-2 distance and using the fact that $(X^{i,N}, \bar X^i)$ has same distribution as
	$(X^{1,N}, \bar X^1)$, for $i=1, \ldots, N$, we have
	\begin{eqnarray*}
		&&\mathcal{W}_2\Big(\mathcal{L}(X^{1,N},\,X^{2,N},\,\cdots,\,X^{k,N}),\;\mathcal{L}(\bar{X}^{1},\,\bar{X}^{2},\,\cdots,\,\bar{X}^{k}) \Big) \leq  \sqrt{k}\,\sqrt{\E\, d_{\clc}(X^{1,N}, \bar{X}^{1})^2}.
	\end{eqnarray*}
	The RHS tends to zero as $N\to\infty$ since $\E\|X^{1,N}- \bar{X}^{1}\|^2_{\clc_T}\to 0$ for all $T\geq 0$ by Theorem \ref{T:POC_PP}. The proof is complete since  $\{\bar{X}^{i}\}$ are i.i.d. 
\end{pf}

\vspace{0.1in}
Since $\{X^{i,N}\}_{i=1}^N$ are exchangeable, by \cite[Proposition 2.2]{Szn91}, we have the following  process level weak convergence of  empirical distributions.
\begin{corollary}\label{cor:UniformPOC_PE_1_3}
	As $N\to\infty$, the random measures $\mu^N \doteq \frac{1}{N} \sum_{i=1}^N \delta_{X^{i,N}}$  converge to the deterministic measure $\mathcal{L}(\bar{X})$ in probability in $\mathcal{P}\left(\clc)\right)$.
\end{corollary}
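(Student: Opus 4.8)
The plan is to deduce this from the chaoticity already established in Corollary~\ref{cor:UniformPOC_PE_1_1}, via the classical equivalence between chaoticity and convergence of empirical measures. First I would record the topological setup: $\clc = \mathcal{C}([0,\infty):\R^d)$ with the distance $d_\clc$ is a Polish space, hence so is $\mathcal{P}(\clc)$, and for a $\mathcal{P}(\clc)$-valued random variable, convergence in law to a \emph{deterministic} limit is the same as convergence in probability in $\mathcal{P}(\clc)$. Next, since all the $\bar X^i$ are i.i.d., $\mathcal{L}(\bar X^1)=\mathcal{L}(\bar X)$, and Corollary~\ref{cor:UniformPOC_PE_1_1} gives, for every fixed $k$, that $\mathcal{W}_2\big(\mathcal{L}(X^{1,N},\dots,X^{k,N}),\,\mathcal{L}(\bar X)^{\otimes k}\big)\to 0$, which in particular yields weak convergence of $\mathcal{L}(X^{1,N},\dots,X^{k,N})$ to $\mathcal{L}(\bar X)^{\otimes k}$ on $\clc^k$. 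By definition this says the symmetric family $(X^{1,N},\dots,X^{N,N})$ is $\mathcal{L}(\bar X)$-chaotic in the sense of \cite{Szn91}.

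The main step is then \cite[Proposition 2.2]{Szn91}: for a sequence of exchangeable $\clc^N$-valued random vectors, $u$-chaoticity is equivalent to convergence in law of the empirical measures $\mu^N=\frac1N\sum_i\delta_{X^{i,N}}$ to the constant $u\in\mathcal{P}(\clc)$. Applying this with $u=\mathcal{L}(\bar X)$ and invoking the first observation gives $\mu^N\to\mathcal{L}(\bar X)$ in probability in $\mathcal{P}(\clc)$. The exchangeability hypothesis is immediate: the $\xi_0^{i,N}$ are i.i.d., the $B^i$ are i.i.d., and the interaction in \eqref{E:PKS_PP_N} enters only through the symmetric empirical measure $\mu^N$ (equivalently through $h_N$ as in \eqref{E:h_PP_N}, which is invariant under permutations of the particles), so by pathwise uniqueness (Proposition~\ref{Wellpose_Chemotaxis_N}) the law of $(X^{1,N},\dots,X^{N,N})$ is exchangeable.

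There is essentially no obstacle here, since all the substantive work is contained in Theorem~\ref{T:POC_PP} and Corollary~\ref{cor:UniformPOC_PE_1_1}; the only care needed is the bookkeeping of exchangeability and the standard metric-space facts above. If one prefers a self-contained argument in place of the citation, one reproduces the proof of \cite[Proposition 2.2]{Szn91}: fix a bounded Lipschitz $f:\clc\to\R$ and, using exchangeability, expand $\E\big|\langle f,\mu^N\rangle-\langle f,\mathcal{L}(\bar X)\rangle\big|^2$ in terms of $N^{-1}\Var(f(X^{1,N}))$, the covariance $\mathrm{Cov}(f(X^{1,N}),f(X^{2,N}))$, and the bias $\E f(X^{1,N})-\langle f,\mathcal{L}(\bar X)\rangle$; the $k=1$ and $k=2$ cases of Corollary~\ref{cor:UniformPOC_PE_1_1} force the bias and covariance to $0$, while the variance term is $O(1/N)$, so $\langle f,\mu^N\rangle\to\langle f,\mathcal{L}(\bar X)\rangle$ in $L^2$. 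Since the weak topology on $\mathcal{P}(\clc)$ is determined by countably many such functionals, this gives $\mu^N\to\mathcal{L}(\bar X)$ in probability in $\mathcal{P}(\clc)$.
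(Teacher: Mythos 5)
Your argument is correct and follows essentially the same route as the paper: the paper likewise deduces the statement from the chaoticity in Corollary \ref{cor:UniformPOC_PE_1_1} together with the exchangeability of $\{X^{i,N}\}_{i=1}^N$, invoking \cite[Proposition 2.2]{Szn91}. Your additional remarks on exchangeability via pathwise uniqueness and on the deterministic-limit/convergence-in-probability equivalence are exactly the bookkeeping the paper leaves implicit.
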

Observe that  Corollary \ref{cor:UniformPOC_PE_1_1} implies in particular that
\begin{equation}\label{E:UniformPOC_PE_1_1}
\sup_{t\in[0,T]} \mathcal{W}_2\Big(\mathcal{L}(X^{1,N}_t,\,X^{2,N}_t,\,\cdots,\,X^{k,N}_t),\;\mathcal{L}(\bar{X}_t)^{\otimes \,k}\Big) \longrightarrow 0
\end{equation}
for all $T\geq 0$.
However this result does not give uniform in time convergence of these multidimensional laws. 
To obtain a uniform in time result, we will make the stronger assumption in Assumption \ref{A:A}. 
The following is the analog of Theorem \ref{T:POC_PP} over an infinite time horizon.
\begin{theorem}\label{T:UniformPOC_PP}
	Suppose Assumption \ref{A:A} is satisfied.
	Then there exists $C\in (0,\infty)$ such that
	\begin{equation*}
	\sup_{t \geq 0 }\E\left[|X^{i,N}_t - \bar{X}_t^i|^2  \right] \le \frac{C}{N}.
	\end{equation*}
\end{theorem}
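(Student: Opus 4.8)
The plan is to run a coupling/Gr\"onwall argument on the difference process $D^{i,N}_t \doteq X^{i,N}_t - \bar X^i_t$, but in contrast to the finite-horizon Theorem \ref{T:POC_PP} we must arrange the estimate so that the constant multiplying $\E|D^{i,N}_t|^2$ in the differential inequality is \emph{strictly negative}, and this is where Assumption \ref{A:A} enters. First I would write, from \eqref{E:PKS_PP_N} and \eqref{E:NonlinearProcessPP},
\begin{equation*}
dD^{i,N}_t = -\bigl(\nabla V(X^{i,N}_t) - \nabla V(\bar X^i_t)\bigr)\,dt + \chi\bigl(\nabla h_N(t,X^{i,N}_t) - \nabla h(t,\bar X^i_t)\bigr)\,dt,
\end{equation*}
so that $D^{i,N}$ has no martingale part (the Brownian motions cancel). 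Applying It\^o's formula to $|D^{i,N}_t|^2$ and taking expectations gives
\begin{equation*}
\frac{d}{dt}\E|D^{i,N}_t|^2 = -2\,\E\langle D^{i,N}_t, \nabla V(X^{i,N}_t)-\nabla V(\bar X^i_t)\rangle + 2\chi\,\E\langle D^{i,N}_t, \nabla h_N(t,X^{i,N}_t)-\nabla h(t,\bar X^i_t)\rangle.
\end{equation*}
The first term is bounded above by $-2 v_*\,\E|D^{i,N}_t|^2$ using the definition \eqref{eq:eq951} of $v_*$.

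The main work is controlling the chemical term. I would split it as
\begin{equation*}
\nabla h_N(t,X^{i,N}_t) - \nabla h(t,\bar X^i_t) = \bigl(\nabla h_N(t,X^{i,N}_t) - \nabla h_N(t,\bar X^i_t)\bigr) + \bigl(\nabla h_N(t,\bar X^i_t) - \nabla h(t,\bar X^i_t)\bigr).
\end{equation*}
For the first piece one wants a Lipschitz bound in $x$ that is \emph{uniform in $t$} and, crucially, has the sharp constant $d\bigl(\|\Hess h_0\|_\infty + 2\beta\|\Hess g\|_\infty/\alpha\bigr)$ appearing in $\lambda$; this comes from differentiating the representation \eqref{E:h_PP_N}--\eqref{Def:Theta}, bounding $\|\Hess Q_t h_0\|$ by $\|\Hess h_0\|_\infty$ and $\|\Hess Q_{t-s}(g(\cdot-y))\|$ by $\|\Hess g\|_\infty$, and integrating $\int_0^t e^{-\alpha(t-s)}\,ds \le 1/\alpha$ (recall $\gamma=1$). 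This yields $|\nabla h_N(t,x)-\nabla h_N(t,y)| \le (\lambda/\chi)|x-y|$, so the first piece contributes at most $2\lambda\,\E|D^{i,N}_t|^2$ after multiplying by $\chi$ and using Cauchy--Schwarz. Combining with the potential term gives a coefficient $-2(v_* - \lambda) < 0$, which is exactly what Assumption \ref{A:A} provides.

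For the second piece, $\nabla h_N(t,\bar X^i_t) - \nabla h(t,\bar X^i_t)$, note that by \eqref{E:h_PP_2} and \eqref{E:h_PP_N} this equals $\chi^{-1}\lambda$-Lipschitz-controllable in terms of $\nabla\Theta^{\mu^N}_t - \nabla\Theta^{\mu}_t$ evaluated at $\bar X^i_t$, and since $\Theta$ is linear in the measure, this is $\chi\beta\int_0^t Q_{t-s}(\nabla g(\cdot-y))$ integrated against $(\mu^N_s - \mu_s)$. I would handle this by comparing $\mu^N$ to the empirical measure $\bar\mu^N \doteq \frac1N\sum_j \delta_{\bar X^j}$ of the i.i.d. copies: the difference $\nabla h_N$ built from $\mu^N$ versus from $\bar\mu^N$ is again $(\lambda/\chi)$-Lipschitz-controlled by $\frac1N\sum_j |D^{j,N}_s|$, contributing another term that, after exchangeability and averaging over $i$, feeds back into $\sup_{s\le t}\E|D^{1,N}_s|^2$ with the same constant $\lambda$; and the difference between $\bar\mu^N$ and $\mu=\mathcal L(\bar X)$ is a genuine statistical error of order $1/N$ in $L^2$, bounded by a standard variance estimate $\E|\frac1N\sum_j (f(\bar X^j_s) - \langle f,\mu_s\rangle)|^2 \le \|f\|_\infty^2/N$ applied coordinatewise to the bounded kernel $Q_{t-s}\nabla g(\cdot - \bar X^i_t)$, uniformly in $t$ because of the exponential decay $e^{-\alpha(t-s)}$. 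Here one must be a little careful because $\bar X^i_t$ and $\bar X^j_s$ are not independent, but conditioning on $\bar X^i$ (or using that $g$ and its derivatives are bounded so the relevant kernel is a bounded function of the pair) reduces it to the i.i.d. variance bound; this bookkeeping, plus tracking that every appearance of the history-dependent chemical term contributes only the \emph{same} sharp constant $\lambda$ rather than accumulating, is the main obstacle. Assembling these into
\begin{equation*}
\frac{d}{dt}\E|D^{1,N}_t|^2 \le -2(v_*-\lambda)\,\E|D^{1,N}_t|^2 + 2(v_*-\lambda)\sup_{s\le t}\E|D^{1,N}_s|^2\cdot(\text{something}<1) + \frac{C}{N}
\end{equation*}
— or, more cleanly, first establishing the bound with $\sup_{s\le t}\E|D^{1,N}_s|^2$ on the right with a coefficient strictly less than the negative drift rate so it can be absorbed — and then applying a Gr\"onwall-type argument on $\phi(t)\doteq\E|D^{1,N}_t|^2$ (using $\phi(0)=0$) yields $\sup_{t\ge 0}\phi(t) \le C/N$, completing the proof.
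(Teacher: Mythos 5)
Your overall skeleton --- coupling $X^{i,N}$ with $\bar X^i$, splitting the chemical drift into a spatial-Lipschitz piece, a $\mu^N$-versus-$\nu^N$ piece controlled by $\frac1N\sum_j|\Delta^j_s|$, and a statistical $\nu^N$-versus-$\mu$ piece handled by conditional orthogonality (order $1/\sqrt N$ uniformly in $t$ thanks to the factor $e^{-\alpha(t-s)}$) --- is exactly the paper's. But your constant bookkeeping, as stated, does not close under Assumption \ref{A:A}. The spatial Lipschitz constant of $\nabla h^{m}(t,\cdot)$ from Lemma \ref{L:Properties_PPN} is $d\,\big(e^{-\alpha t}\|\Hess h_0\|_\infty + \beta\|\Hess g\|_\infty/\alpha\big)$, i.e.\ it carries only \emph{one} factor of $\beta\|\Hess g\|_\infty/\alpha$; the second factor inside $\lambda$ is reserved for the memory ($\mu^N$ vs.\ $\nu^N$) term. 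You instead assign the full constant $\lambda/\chi$ to the spatial piece (leaving drift $-2(v_*-\lambda)$) and then assert that the measure-comparison piece ``feeds back with the same constant $\lambda$''. Taken literally, a drift $-2(v_*-\lambda)$ against a sup-term with coefficient of order $\lambda$ can only be absorbed if $v_*>2\lambda$, which Assumption \ref{A:A} does not provide. The correct allocation --- drift coefficient $2\tilde\lambda$ with $\tilde\lambda = v_* - \chi d\|\Hess h_0\|_\infty - \chi\beta d\|\Hess g\|_\infty/\alpha$, and memory coefficient $\chi\beta d\|\Hess g\|_\infty/\alpha$ --- makes ``memory $<$ drift'' exactly equivalent to $v_*>\lambda$; you flag this non-accumulation of constants as ``the main obstacle'' but do not resolve it.

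Second, the concluding step is not a routine Gr\"onwall argument, and it is where the paper does most of its work. After taking expectations one is left with the memory inequality $f'(t)\le -2\tilde\lambda f(t) + 2\tilde C_2\sqrt{f(t)}\int_0^t e^{-\alpha(t-s)}\sqrt{f(s)}\,ds + \tfrac{2\tilde C_3}{\sqrt N}\sqrt{f(t)}$; the paper regularizes the square root (to handle non-Lipschitzness at $0$), compares with a second-order linear ODE whose characteristic roots $r_2<0<r_1$ satisfy $r_1<\tilde\lambda$ precisely because Assumption \ref{A:A} gives $\tilde C_2<\tilde\lambda\alpha$, and only then extracts the uniform-in-time $O(1/\sqrt N)$ bound. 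Your route --- bound the memory integral by $\alpha^{-1}\sup_{s\le t}\sqrt{f(s)}$ and then ``absorb'' --- can in fact replace the ODE comparison via a barrier (Halanay-type) argument at the first time $f$ would exceed the level $\big(\tilde C_3/((\tilde\lambda-\tilde C_2/\alpha)\sqrt N)\big)^2$, and would be simpler; but it only works with the corrected constants above, and as written the sentence ``applying a Gr\"onwall-type argument yields $\sup_{t\ge0}\phi(t)\le C/N$'' asserts rather than proves the hardest part (note that a plain Gr\"onwall bound with a sup on the right and $\phi(0)=0$ gives nothing uniform in time unless the strict negativity of the net drift is exploited as above).
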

As an immediate consequence of the theorem we have the following uniform in time propagation of chaos result and uniform in time convergence of the empirical measures $\mu^N(t)$.
\begin{corollary}\label{cor:UniformPOC_PE_2_1}
	Suppose Assumption \ref{A:A} is satisfied. Then for all $N, k\in \mathbb{N}$, we have
	\begin{equation*}
	\sup_{t\geq 0} \mathcal{W}_2\Big(\mathcal{L}(X^{1,N}_t,\,X^{2,N}_t,\,\cdots,\,X^{k,N}_t),\;\mathcal{L}(\bar{X}^1_t)^{\otimes \,k}\Big) \le \frac{C \sqrt{k}}{\sqrt{N}},
	\end{equation*}
	where $\mathcal{W}_2$ is the Wasserstein-2 distance on $\mathcal{P}(\mathbb{R}^{dk})$.  Furthermore, if $\mu_0 \in \clp_q(\R^d)$ for some $q>2$, then
	$$\sup_{t\ge 0} \E\left[\clw^2_2(\mu^N_t, \mu_t)\right] \to 0$$
	as $N\to \infty$, where $\mu^N(t) = \frac{1}{N} \sum_{i=1}^N \delta_{X^{i,N}_t}$ and 
	$\mu_t = \mathcal{L}(\bar X^1_t)$.
\end{corollary}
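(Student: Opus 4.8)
The plan is to prove Corollary~\ref{cor:UniformPOC_PE_2_1} in two stages: first the quantitative Wasserstein bound on the $k$-marginals, which is an essentially immediate consequence of Theorem~\ref{T:UniformPOC_PP}, and then the empirical measure convergence, which requires a little more work to control the fluctuation of the empirical measure of the i.i.d.\ copies $\bar X^i_t$ around their common law $\mu_t$.

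First I would dispose of the marginal statement. Fix $N,k\in\mathbb N$ and $t\ge 0$. Using the coupling $(X^{i,N},\bar X^i)$ from Section~\ref{subsectionPOC}, the joint law of $(X^{1,N}_t,\dots,X^{k,N}_t)$ together with the i.i.d.\ vector $(\bar X^1_t,\dots,\bar X^k_t)$ (whose law is $\mathcal L(\bar X^1_t)^{\otimes k}$) furnishes an admissible coupling in the definition~\eqref{eq:eq909} of $\mathcal W_2$ on $\mathcal P(\R^{dk})$. Since the distance on $\R^{dk}$ is the sum of the $k$ component distances in $\R^d$ (per the notation section), one gets
\begin{equation*}
\mathcal W_2\Big(\mathcal L(X^{1,N}_t,\dots,X^{k,N}_t),\,\mathcal L(\bar X^1_t)^{\otimes k}\Big)^2
\le \sum_{i=1}^k \E\big[|X^{i,N}_t-\bar X^i_t|^2\big]
= k\,\E\big[|X^{1,N}_t-\bar X^1_t|^2\big],
\end{equation*}
where the last equality uses exchangeability, i.e.\ that $(X^{i,N},\bar X^i)\stackrel{\mathcal L}{=}(X^{1,N},\bar X^1)$ for each $i$. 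Taking square roots and invoking Theorem~\ref{T:UniformPOC_PP} gives the bound $C\sqrt k/\sqrt N$, uniformly in $t\ge 0$.

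For the second assertion I would use the triangle inequality for $\mathcal W_2$ on $\mathcal P(\R^d)$. Write $\bar\mu^N_t \doteq \frac1N\sum_{i=1}^N \delta_{\bar X^i_t}$ for the empirical measure of the coupled i.i.d.\ copies. Then
\begin{equation*}
\E\big[\mathcal W_2^2(\mu^N_t,\mu_t)\big]
\le 2\,\E\big[\mathcal W_2^2(\mu^N_t,\bar\mu^N_t)\big] + 2\,\E\big[\mathcal W_2^2(\bar\mu^N_t,\mu_t)\big].
\end{equation*}
The first term is controlled deterministically by the synchronous coupling that matches $\delta_{X^{i,N}_t}$ with $\delta_{\bar X^i_t}$: $\mathcal W_2^2(\mu^N_t,\bar\mu^N_t)\le \frac1N\sum_i |X^{i,N}_t-\bar X^i_t|^2$, so by exchangeability and Theorem~\ref{T:UniformPOC_PP} its expectation is at most $\E[|X^{1,N}_t-\bar X^1_t|^2]\le C/N$, uniformly in $t$. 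For the second term, $\bar\mu^N_t$ is the empirical measure of $N$ i.i.d.\ samples from $\mu_t$, so one wants a quantitative law of large numbers in $\mathcal W_2$, uniform in $t$. The standard tool is the bound of the form $\E[\mathcal W_2^2(\bar\mu^N_t,\mu_t)] \le C\,\varepsilon_N\big(M_q(\mu_t)\big)$, where $M_q(\mu_t)=\int|x|^q\,\mu_t(dx)$ and $\varepsilon_N\to 0$ depends only on $N$, $d$, $q$ (Fournier--Guillin / Boissard type rates for empirical measures). Hence it suffices to show $\sup_{t\ge0} M_q(\mu_t)<\infty$ for some $q>2$. This uniform-in-time moment bound is the one genuine piece of work: it should follow from an It\^o-formula / Gronwall estimate applied to $\E|\bar X^1_t|^q$ using the confinement provided by Assumption~\ref{A:A} (the drift $-\nabla V$ satisfies $\langle x,\nabla V(x)\rangle \ge v_*|x|^2 - $ lower order, and $\nabla h$ is bounded by Lemma~\ref{L:Properties_PPN}, so for $q$ even one gets $\frac{d}{dt}\E|\bar X^1_t|^q \le -c\,\E|\bar X^1_t|^q + C$), given $\mu_0\in\mathcal P_q(\R^d)$; for general $q>2$ one interpolates or works with $\lceil q\rceil$. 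Combining the two terms and letting $N\to\infty$ gives $\sup_{t\ge0}\E[\mathcal W_2^2(\mu^N_t,\mu_t)]\to 0$.

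The main obstacle is precisely the uniform-in-time $q$-th moment bound $\sup_{t\ge0}\E|\bar X^1_t|^q<\infty$: everything else is either the definition of Wasserstein distance, exchangeability, or a citation of the empirical-measure rate. I expect the confinement from $v_*>\lambda>0$ in Assumption~\ref{A:A}, together with the global bound on $\nabla h$ from Lemma~\ref{L:Properties_PPN}, to be exactly what makes the Lyapunov estimate for $|\bar X^1_t|^q$ dissipative, so that the moment stays bounded for all time; the hypothesis $\mu_0\in\mathcal P_q(\R^d)$ ensures the bound holds at $t=0$ and hence for all $t$ by Gronwall.
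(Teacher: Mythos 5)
Your proposal is correct and follows essentially the same route as the paper: the marginal bound is the same coupling/exchangeability consequence of Theorem~\ref{T:UniformPOC_PP}, and the empirical-measure statement is obtained, as in the paper, by splitting through the empirical measure of the coupled i.i.d.\ copies, bounding the coupling term by Theorem~\ref{T:UniformPOC_PP} and the i.i.d.\ term by the Fournier--Guillin rate \cite{FG15}. The uniform-in-time moment bound you single out as the genuine work is exactly what the paper supplies in Remark~\ref{Rk:Smoment}(ii) by the same It\^o/Gronwall confinement argument using $v_*>0$ and Lemma~\ref{L:Properties_PPN}, there carried out with the exponent $q_0/2+1>2$, which sidesteps your need to restrict to even $q$ or interpolate.
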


Proofs of Theorem \ref{T:POC_PP}, Theorem \ref{T:UniformPOC_PP} and Corollary \ref{cor:UniformPOC_PE_2_1}
are given in Section \ref{sec:poc}.


\subsection{Concentration bounds}

In this section, we present our concentration estimates for $\mu^N_t$  in $\mathcal{W}_1$-distance.
As in the previous subsection, we first give a result for finite time horizons (this result will not use Assumption \ref{A:A}). 
\begin{theorem} \label{T:Concen_PP}
	Suppose the initial distribution $\mu_0\in \mathcal{P}(\R^d)$ has a finite square-exponential
	moment, that is,
	there is $\theta_0>0$ such that 
	\begin{equation}\label{eq:eq610}
	\int_{\R^d} e^{\theta_0 \,|x|^2} \, d\mu_0(x) < \infty.
	\end{equation}
	Fix $T\in (0, \infty)$. Then there is a $K\in (0, \infty)$ and,  for any $d'\in (d,\infty)$, $N_0$ and $C$ in $(0, \infty)$, such that
	$$
	\mathbb{P}  \Big(\sup_{0 \leq t \leq T} \mathcal{W}_1(\mu_t^N,\mu_t) > \epsilon\Big) \leq
	C (1+\epsilon^{-2}) \exp \left( - K \, N\, \epsilon^2 \right)
	$$
	for all $N \geq N_0\,\max(\epsilon^{-(d'+2)},1)$ and $\epsilon>0$.
\end{theorem}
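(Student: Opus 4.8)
The plan is to combine the finite-time propagation of chaos estimate from Theorem \ref{T:POC_PP} with a concentration bound for the empirical measure of the i.i.d. nonlinear particles $\{\bar X^i\}$. First I would observe that by the triangle inequality for $\mathcal{W}_1$ and the Kantorovich-Rubinstein duality,
\[
\mathcal{W}_1(\mu^N_t, \mu_t) \le \mathcal{W}_1(\mu^N_t, \bar\mu^N_t) + \mathcal{W}_1(\bar\mu^N_t, \mu_t),
\]
where $\bar\mu^N_t \doteq \frac1N\sum_{i=1}^N \delta_{\bar X^i_t}$. The first term is controlled pathwise: since the empirical map is 1-Lipschitz in the obvious sense, $\mathcal{W}_1(\mu^N_t,\bar\mu^N_t)\le \frac1N\sum_i |X^{i,N}_t - \bar X^i_t| \le \frac1N\sum_i \sup_{s\le T}|X^{i,N}_s - \bar X^i_s|$. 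Using the coupling and Theorem \ref{T:POC_PP} together with exponential integrability of the increments (which follows from the boundedness and Lipschitz properties of the drift in Lemma \ref{L:Properties_PPN} combined with Gaussian tail bounds on the Brownian motion, plus the square-exponential moment \eqref{eq:eq610} on $\mu_0$), I would show $\sup_{t\le T}\mathcal{W}_1(\mu^N_t,\bar\mu^N_t)$ is bounded above by a quantity that is $O(N^{-1/2})$ in $L^2$ and has sub-Gaussian-type tails; this lets me absorb the event $\{\sup_t \mathcal{W}_1(\mu^N_t,\bar\mu^N_t) > \epsilon/2\}$ into the stated bound $C(1+\epsilon^{-2})\exp(-KN\epsilon^2)$.

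For the second term, $\mathcal{W}_1(\bar\mu^N_t,\mu_t)$ where $\bar\mu^N_t$ is the empirical measure of $N$ i.i.d. samples from $\mu_t$, I would invoke a standard nonasymptotic concentration inequality for empirical measures in $\mathcal{W}_1$ — for instance the Fournier-Guillin deviation bound, or the transportation-information approach of Bolley-Guillin-Villani as in \cite{BGV05}. Such a bound has exactly the structure $\mathbb{P}(\mathcal{W}_1(\bar\mu^N_t,\mu_t)>\eta)\le C(1+\eta^{-2})\exp(-KN\eta^2)$ provided $N\ge N_0 \eta^{-(d'+2)}$ for $d'>d$, and provided one has a uniform-in-$t$ square-exponential moment bound $\sup_{t\le T}\int e^{\theta|x|^2}d\mu_t(x) < \infty$. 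I would establish the latter by a Gronwall-type argument on $\mathbb{E}[e^{\theta|\bar X_t|^2}]$ for small $\theta$, again using the boundedness of $\nabla h$, the lower bound on $\langle x,\nabla V(x)\rangle$ (or just the Lipschitz bound on $\nabla V$ over a finite horizon), and \eqref{eq:eq610}; the constant $K$ in the final statement comes out of this moment, hence does not depend on $T$ in its form but the relevant constant is allowed to, and in any case the theorem only claims existence of such constants for fixed $T$.

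To go from the pointwise-in-$t$ concentration to the supremum over $[0,T]$, I would use a chaining/discretization argument in time: pick a mesh $0=t_0<t_1<\dots<t_M=T$ of size $M \sim N$ (or $M\sim \epsilon^{-a}$ for appropriate $a$), apply the pointwise bound at each $t_j$ with a union bound (costing a factor $M$, which is polynomial in $N,\epsilon^{-1}$ and hence absorbed into the prefactor at the cost of shrinking $K$), and control the modulus of continuity $\sup_{|s-t|\le T/M}\mathcal{W}_1(\bar\mu^N_s,\bar\mu^N_t)\le \frac1N\sum_i \sup_{|s-t|\le T/M}|\bar X^i_s - \bar X^i_t|$ using the a priori sub-Gaussian increment estimates for the nonlinear process. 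The main obstacle I anticipate is precisely this last step — getting a clean exponential tail on the time-modulus of continuity of $\mathcal{W}_1(\mu^N_\cdot,\mu_\cdot)$ uniformly in $N$, since one must handle $N$ processes simultaneously and the drift, while bounded, enters through $h_N$ which itself depends on the empirical measure; I would resolve this by noting $\|\nabla h_N\|_\infty\le C$ uniformly (Lemma \ref{L:Properties_PPN}) so that the increments are dominated by Brownian increments plus a deterministic linear-in-time term, reducing everything to standard Gaussian concentration. Keeping careful track of how the powers of $\epsilon^{-1}$ accumulate (from the prefactor $1+\eta^{-2}$, from the time-mesh count, and from the constraint $N\ge N_0\epsilon^{-(d'+2)}$) is the bookkeeping one must do to land exactly on the stated form.
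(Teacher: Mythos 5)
There is a genuine gap in your treatment of the coupling term $\mathcal{W}_1(\mu^N_t,\bar\mu^N_t)$. You propose to control $\mathbb{P}\big(\sup_{t\le T}\mathcal{W}_1(\mu^N_t,\bar\mu^N_t)>\epsilon/2\big)$ by combining Theorem \ref{T:POC_PP} with ``sub-Gaussian-type tails'' for $\frac1N\sum_i\sup_{s\le T}|X^{i,N}_s-\bar X^i_s|$, but Theorem \ref{T:POC_PP} is only an $L^2$ bound: by itself it yields, via Chebyshev, a bound of order $C_T/(N\epsilon^2)$, which is polynomial and cannot be absorbed into $C(1+\epsilon^{-2})e^{-KN\epsilon^2}$. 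The asserted exponential tails do not follow from exponential integrability of the individual particles: the quantities $\sup_{s\le T}|X^{i,N}_s-\bar X^i_s|$ are each of order one pathwise (their $O(N^{-1/2})$ size comes from a cancellation in the mean-field term that is captured only after taking expectations, cf.\ the orthogonality argument in \eqref{E:UniformPOC_PP3}), and they are strongly dependent across $i$ through $h_N$, so no off-the-shelf concentration inequality applies. Establishing a deviation bound at rate $e^{-KN\epsilon^2}$ for this averaged coupling error would itself be a concentration-of-measure result for the whole interacting system, which your sketch does not supply.

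The paper avoids this problem entirely by a different handling of the same decomposition: instead of estimating the coupling error in probability, Proposition \ref{prop:W1PP} bounds it \emph{pathwise}. The fluctuation term $\int\nabla P_{t-s}g(y-\bar X^i_t)\,(\nu^N_s-\mu_s)(dy)$ is bounded for each realization by $d\|\Hess g\|_\infty\,\mathcal{W}_1(\nu^N_s,\mu_s)$ via Kantorovich--Rubinstein duality, and a Gronwall/comparison-ODE argument then gives $\mathcal{W}_1(\mu^N_t,\nu^N_t)\le\sqrt{F(t)}\le C_T'\int_0^t\mathcal{W}_1(\nu^N_s,\mu_s)\,ds$ almost surely (Corollary \ref{cor:cor619}), hence $\sup_{t\le T}\mathcal{W}_1(\mu^N_t,\mu_t)\le\kappa_T\sup_{t\le T}\mathcal{W}_1(\nu^N_t,\mu_t)$. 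The entire problem is thus reduced to the i.i.d.\ empirical measure $\nu^N$, where the BGV concentration result (Theorem \ref{T:BGV1.1}), the square-exponential moment bound of Proposition \ref{prop:Smoment}(i), and the time-regularity estimates (\eqref{E:W_st}, Proposition \ref{prop:nu_st}) apply. Your remaining ingredients (the moment bound on $\mu_t$ over $[0,T]$, the appeal to a BGV/Fournier--Guillin deviation inequality, and the time discretization with increment control via $\|\nabla h_N\|_\infty\le C$) are essentially correct and mirror the paper, but the proof as proposed does not close without replacing the probabilistic control of the coupling term by a pathwise domination of the type in Proposition \ref{prop:W1PP}.
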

We note that the constant $K$ may depend on $T$ but not on $\eps$ and $d'$; also $C$ and $N_0$ may depend on $T$ and $d'$
but not on $\eps$.
The main idea  in the proof is, as in \cite{BGV05}, to (i)  bound $\mathcal{W}_1(\mu_t^N,\mu_t)$ in terms of $\big(\mathcal{W}_1(\nu_s^N,\mu_s)\big)_{s\in[0,t]}$ where
\begin{equation}\label{def:munuN}
\nu^N_s\doteq  \frac{1}{N} \sum_{i=1}^N \delta_{\bar{X}^{i}_s},
\end{equation}
and $\{\bar{X}^i\}_{i=1}^N$ are the processes defined at the beginning of Section \ref{subsectionPOC}, and
then; (ii) estimate $\big(\mathcal{W}_1(\nu_s^N,\mu_s)\big)_{s\in[0,t]}$ which is a quantity that concerns i.i.d. random variables $\{\bar{X}^i\}_{i=1}^N$. The first step is accomplished in Subsection \ref{s:4.3.1} via a coupling argument similar to the one used in the proof of results in Section \ref{subsectionPOC}, while the second step relies on 
an estimate from \cite{BGV05} for the tail probabilities for empirical measures of i.i.d. random variables that
is based on the equivalence between Talagrand's transportation inequalities (cf. \cite{BGV05}) and existence of a finite
square-exponential moment. The precise result obtained in \cite{BGV05} is as follows.
For $a,\alpha\in (0,\infty)$ we let
$$\clp_{a,\alpha} \doteq \{\nu \in \clp(\R^d): 	\int_{\R^d} e^{\alpha \,|x|^2} \, d\nu(x) \le a\}.$$
\begin{theorem}\cite[Theorem 2.1]{BGV05}\label{T:BGV1.1}
	Fix $a,\alpha\in (0,\infty)$.	Then, there is a $\theta > 0$ such that for any $d'\in (d,\infty)$  there exists a positive integer $N_0$ such that
	\begin{equation*}
	\sup_{\nu \in \clp_{a,\alpha}}
	\mathbb{P}\left( \mathcal{W}_1(\hat{\nu}^N,\,\nu)>\eps\right)\le e^{- \, \frac{\theta}{2} \, N \, \eps^2}.
	\end{equation*}
	for all $\eps >0$ and $N \geq N_0\,\max(\eps^{-(d'+2)},\,1) $, where $\hat{\nu}^N \doteq \frac{1}{N} \sum_{i=1}^N \delta_{Z^i}$ and $(Z^i)_{i \in \mathbb{N}}$ are iid random variables with  law $\nu$.
\end{theorem}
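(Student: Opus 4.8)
The plan is to combine a Talagrand transportation--entropy ($T_1$) inequality that holds uniformly over $\mathcal{P}_{a,\alpha}$ with a uniform bound on the mean of $\mathcal{W}_1(\hat\nu^N,\nu)$. For the transportation inequality I would invoke the equivalence, due to Bobkov--G\"otze, between $T_1$ and a sub-Gaussian Laplace-transform bound for $1$-Lipschitz functions, together with the Djellout--Guillin--Wu criterion that a finite square-exponential moment implies $T_1$; tracking the constants shows that the resulting constant is uniform over the class. Thus there is $C=C(a,\alpha)\in(0,\infty)$ such that for every $\nu\in\mathcal{P}_{a,\alpha}$ and every $1$-Lipschitz $f:\R^d\to\R$,
\[
\int_{\R^d} e^{\lambda(f-\langle f,\nu\rangle)}\,d\nu\le e^{C\lambda^2/2},\qquad \lambda\in\R .
\]

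Next I would transfer this to the $N$-fold product and then to $\mathcal{W}_1(\hat\nu^N,\nu)$. The Laplace-transform bound tensorizes coordinatewise by iterated conditioning: for $(Z^i)$ i.i.d.\ with law $\nu$ and any $H:(\R^d)^N\to\R$ that is $1$-Lipschitz in each coordinate separately, $\mathbb{E}[e^{\lambda (H-\mathbb{E}H)}]\le e^{NC\lambda^2/2}$. I apply this to $H\doteq N\,\mathcal{W}_1\big(\tfrac1N\sum_i\delta_{Z^i},\nu\big)$: replacing one $Z^i$ by $z$ moves the empirical measure by at most $\tfrac1N|Z^i-z|$ in $\mathcal{W}_1$, so $H$ is indeed $1$-Lipschitz in each coordinate; since the prefactor $e^{NC\lambda^2/2}$ is compensated by the factor $N$ built into $H$, a Chernoff optimization gives
\[
\mathbb{P}\big(\mathcal{W}_1(\hat\nu^N,\nu)\ge \mathbb{E}[\mathcal{W}_1(\hat\nu^N,\nu)]+r\big)\le e^{-N r^2/(2C)},\qquad r>0,
\]
uniformly over $\nu\in\mathcal{P}_{a,\alpha}$ and $N\in\mathbb{N}$.

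It remains to absorb the mean. Here I would use the classical nonasymptotic rate of convergence of empirical measures in $\mathcal{W}_1$, in a form uniform over $\mathcal{P}_{a,\alpha}$: decompose $\R^d$ into a centered ball $B_R$ and dyadic annuli; on $B_R$ compare $\hat\nu^N$ to $\nu$ using a mesh of size of order $N^{-1/d}$ via a block/covering estimate; and control the mass and first moment of $\nu$ and of $\hat\nu^N$ outside $B_R$ using $\int e^{\alpha|x|^2}d\nu\le a$ together with a Chernoff bound for $\tfrac1N\sum_i|Z^i|\mathbf{1}_{\{|Z^i|>R\}}$; optimizing $R=R(N)$ then yields $\sup_{\nu\in\mathcal{P}_{a,\alpha}}\mathbb{E}[\mathcal{W}_1(\hat\nu^N,\nu)]\le C'\,N^{-1/(d'+2)}$ with $C'=C'(d',a,\alpha)$ (the logarithmic loss present in low dimension is swallowed by the slack $d'>d$, and the ``$+2$'' accommodates the moment estimates). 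Consequently, for $N\ge N_0\max(\eps^{-(d'+2)},1)$ with $N_0=N_0(d',a,\alpha)$ large enough, $\mathbb{E}[\mathcal{W}_1(\hat\nu^N,\nu)]\le\eps/2$ uniformly over the class; taking $r=\eps/2$ in the concentration bound above gives $\mathbb{P}(\mathcal{W}_1(\hat\nu^N,\nu)>\eps)\le e^{-N\eps^2/(8C)}$, which is the assertion with $\theta\doteq 1/(4C)$ depending only on $(a,\alpha)$, the dependence on $d'$ being confined to $N_0$.

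I expect the mean estimate to be the main obstacle: obtaining the rate $N^{-1/(d'+2)}$ \emph{uniformly} over the infinite-dimensional family $\mathcal{P}_{a,\alpha}$, matching it cleanly against the threshold $N\ge N_0\eps^{-(d'+2)}$, handling the non-compactness through the square-exponential tail rather than a crude truncation, and arranging that every constant depends only on $(a,\alpha)$ while only $N_0$ sees $d'$. By contrast, the transportation-inequality input, its coordinatewise tensorization, and the final optimization in $r$ are standard. An equivalent route would avoid the mean bound altogether and instead control $\sup_{f\in\mathrm{Lip}_1(\R^d)}\langle f,\hat\nu^N-\nu\rangle$ directly by a covering-number/union-bound argument over $1$-Lipschitz test functions, using Kantorovich--Rubinstein duality \eqref{E:KR_Duality} and the per-function sub-Gaussian bound; the covering numbers of Lipschitz balls on $\R^d$ are then what produce the condition $N\ge N_0\eps^{-(d'+2)}$, and the crux is again the matching of this entropy against $N\eps^2$.
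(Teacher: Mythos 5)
There is nothing to compare against inside the paper: Theorem \ref{T:BGV1.1} is not proved here, it is imported verbatim from \cite[Theorem 2.1]{BGV05} and used as a black box in Section \ref{sec:sec-concbd}. Judged on its own, your outline is a correct and standard way to prove the statement, but it is not the route taken in \cite{BGV05}. You decompose the deviation as ``Gaussian concentration of $\mathcal{W}_1(\hat\nu^N,\nu)$ around its mean'' (via the uniform $T_1$ inequality from the square-exponential moment, Bobkov--G\"otze duality, and coordinatewise tensorization of the Laplace bound applied to the $1/N$-coordinate-Lipschitz functional) plus a uniform bias bound $\sup_{\nu\in\clp_{a,\alpha}}\E[\mathcal{W}_1(\hat\nu^N,\nu)]\le C' N^{-1/(d'+2)}$; this is essentially the modern Fournier--Guillin-style argument, and the quantifier structure comes out right ($\theta$ depending only on $(a,\alpha)$, $d'$ entering only through $N_0$). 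The original proof in \cite{BGV05} is instead the ``equivalent route'' you mention in your last paragraph: truncation to a ball $B_R$ using the square-exponential tail, Kantorovich--Rubinstein duality, a covering of the unit Lipschitz ball on $B_R$ with metric entropy of order $(R/\delta)^d$, and a union bound over the net with the per-function sub-Gaussian estimate; the entropy-versus-$N\eps^2$ tradeoff is what produces $N\ge N_0\,\eps^{-(d'+2)}$. Your version trades that covering argument for the mean estimate, which, as you correctly flag, is the only step carrying real weight: it does close, because the target rate $N^{-1/(d'+2)}$ is strictly weaker than the sharp $N^{-1/d}$ (with a logarithm when $d=2$ and $N^{-1/2}$ when $d=1$) delivered by the dyadic block argument, whose constants depend only on a fixed moment of $\nu$ and hence are uniform over $\clp_{a,\alpha}$. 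The one point to be explicit about if you wrote this out is the uniformity of the $T_1$ constant over the class, which does follow from the Djellout--Guillin--Wu criterion with constants expressed in terms of $a$ and $\alpha$ only.
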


We shall apply this theorem to $\nu=\mu_s$. In order to do so, we need  $\mu_s$ to have a finite squared-exponential moment. 
We will show in Section \ref{s:4.3.2} that if $\mu_0$ satisfies \eqref{eq:eq610}, then for every $T>0$ there is a
$\theta_T \in (0, \theta_0)$ such that  
\begin{equation}\label{eq:eq610T}
\sup_{s\in [0,T]}\int_{\R^d} e^{\theta_T \,|x|^2} \, d\mu_s(x) < \infty.
\end{equation}
This will allow us to apply Theorem \ref{T:BGV1.1} in completing step (ii) in the proof
of Theorem \ref{T:Concen_PP}.

We  next show in Theorem \ref{T:Concen_PP_longtime} below that when the convexity property in Assumption \ref{A:A} is satisfied then a {\em uniform in time} concentration bound holds. The key step (see Section \ref{s:4.3.2}) is to argue (see Proposition \ref{prop:Smoment}) that under this assumption, for some $\theta_{\infty} >0$, \eqref{eq:eq610T}
holds with $[0,T]$ and $\theta_T$  replaced with $[0,\infty)$ and $\theta_{\infty}$ respectively.
This together with another uniform bound established in Section \ref{s:4.3.1} (Proposition \ref{prop:W1PP}) will imply the uniform in time  concentration bound given in the theorem below. 
\begin{theorem}\label{T:Concen_PP_longtime}
	Suppose that $\mu_0$ satisfies \eqref{eq:eq610} for some $\theta_0>0$. Suppose further that Assumption \ref{A:A} is satisfied.
	Then there exists $K \in (0, \infty)$ such that
	for any $d'\in (d,\infty)$, there exist $C\in (0, \infty)$ and $N_0\in (0, \infty)$ such that
	$$
	\sup_{t\geq 0} \mathbb{P} \, ( \mathcal{W}_1(\mu_t^N,\mu_t) > \epsilon) \leq
	C (1+\epsilon^{-2})\; \exp \left( - K \, N\, \epsilon^2 \right)
	$$
	for all $N \geq N_0 \max(\epsilon^{-(d'+2)},1) $ and $\epsilon>0$.
\end{theorem}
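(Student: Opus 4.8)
The plan is to combine two ingredients, following the blueprint already laid out in the paragraphs preceding the statement. The first ingredient is a coupling/stability estimate that controls $\mathcal{W}_1(\mu_t^N,\mu_t)$ by the ``i.i.d. fluctuation'' $\mathcal{W}_1(\nu_s^N,\mu_s)$, where $\nu^N_s = \frac1N\sum_i \delta_{\bar X^i_s}$ is the empirical measure of the coupled nonlinear processes; the second is the i.i.d. concentration bound of Theorem \ref{T:BGV1.1}, applied to $\nu = \mu_t$, which requires a uniform-in-time square-exponential moment bound on $(\mu_t)_{t\ge 0}$. So the proof has three phases: (a) establish the uniform square-exponential moment bound under Assumption \ref{A:A}; (b) establish the uniform coupling bound $\mathcal{W}_1(\mu_t^N,\mu_t) \le (\text{something small, uniformly in } t) + \mathcal{W}_1(\nu_t^N,\mu_t)$ with good tail control on the ``something''; (c) assemble (a), (b), and Theorem \ref{T:BGV1.1} into the stated exponential bound.

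For phase (a): I would show that if $\int e^{\theta_0|x|^2}d\mu_0 < \infty$ then, under $v_* > \lambda$, there is $\theta_\infty > 0$ with $\sup_{t\ge 0}\int e^{\theta_\infty|x|^2}d\mu_t < \infty$ (this is Proposition \ref{prop:Smoment}, referenced in the excerpt). The natural route is a Lyapunov/Gronwall argument on $t \mapsto \E[e^{\theta(t)|\bar X_t|^2}]$ for a suitable decreasing function $\theta(t)$: apply It\^o to $e^{\theta|x|^2}$ along the nonlinear SDE $d\bar X_t = dB_t - \nabla V(\bar X_t)dt + \chi\nabla h(t,\bar X_t)dt$, use the one-sided Lipschitz/convexity bound $\langle x, \nabla V(x)\rangle \ge v_* |x|^2 - c|x|$ (which follows from \eqref{eq:eq951} together with $\nabla V(0)=0$), and the uniform bound $\|\nabla h\|_\infty \le C$ from Lemma \ref{L:Properties_PPN} (more precisely, the constant there is governed by $\|\Hess h_0\|_\infty$ and $\|\Hess g\|_\infty/\alpha$, which is exactly what makes $\lambda$ the relevant threshold). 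The drift then contributes a term $\le -2\theta(v_* - \lambda)|x|^2 e^{\theta|x|^2} + (\text{lower order})$, and since $v_* - \lambda > 0$ the exponential moment stays bounded uniformly in $t$ for small enough $\theta$.

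For phase (b): I would reuse the coupling $(X^{i,N}, \bar X^i)$ from Section \ref{subsectionPOC}. By Kantorovich--Rubinstein duality \eqref{E:KR_Duality} and the triangle inequality, $\mathcal{W}_1(\mu_t^N,\mu_t) \le \mathcal{W}_1(\mu_t^N,\nu_t^N) + \mathcal{W}_1(\nu_t^N,\mu_t) \le \frac1N\sum_i |X^{i,N}_t - \bar X^i_t| + \mathcal{W}_1(\nu_t^N,\mu_t)$. The first term has $L^1$ norm $\le (\E|X^{1,N}_t - \bar X^1_t|^2)^{1/2} \le C/\sqrt N$ by Theorem \ref{T:UniformPOC_PP}; for the tail bound I need more, namely a uniform-in-time exponential (or at least strong enough polynomial) concentration of $\frac1N\sum_i |X^{i,N}_t - \bar X^i_t|$ around a quantity of order $N^{-1/2}$. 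This is Proposition \ref{prop:W1PP} alluded to in the excerpt, and I expect it to be the main obstacle: the pairwise differences $X^{i,N}_t - \bar X^i_t$ are not independent (they are coupled through $h_N$ vs.\ $h$, i.e.\ through the whole empirical history), so one cannot simply invoke a Bernstein-type inequality. The way I would handle it is to control $\E\big[\exp(\lambda N^{-1}\sum_i \sup_{s\le t}|X^{i,N}_s - \bar X^i_s|^2)\big]$ or a similar object: write a Gronwall-type differential inequality for $\sup_s |X^{i,N}_s - \bar X^i_s|^2$, exploit the Lipschitz property of $\nabla h^m$ from Lemma \ref{L:Properties_PPN} uniformly in $m$ to absorb the dependence on $\mu^N$ vs.\ $\mu$, use Assumption \ref{A:A} to get a negative exponential rate (so the Gronwall constant does not blow up in $t$), and then bound the resulting exponential moment using the uniform square-exponential moments from phase (a) plus standard martingale (BDG/Novikov) estimates for the stochastic integral terms. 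A Markov/Chebyshev step on this exponential moment then yields $\mathbb{P}(\frac1N\sum_i|X^{i,N}_t-\bar X^i_t| > \epsilon/2) \le C(1+\epsilon^{-2})e^{-KN\epsilon^2}$ uniformly in $t$.

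For phase (c): with phase (a) giving $\mu_t \in \clp_{a,\alpha}$ for all $t$ (some fixed $a,\alpha$), Theorem \ref{T:BGV1.1} applied to $\nu = \mu_t$, $\hat\nu^N = \nu_t^N$ gives $\sup_{t\ge0}\mathbb{P}(\mathcal{W}_1(\nu_t^N,\mu_t) > \epsilon/2) \le e^{-(\theta/8)N\epsilon^2}$ for $N \ge N_0\max(\epsilon^{-(d'+2)},1)$. Combining with the tail bound from phase (b) via $\mathbb{P}(\mathcal{W}_1(\mu_t^N,\mu_t) > \epsilon) \le \mathbb{P}(\frac1N\sum_i|X^{i,N}_t - \bar X^i_t| > \epsilon/2) + \mathbb{P}(\mathcal{W}_1(\nu_t^N,\mu_t) > \epsilon/2)$ and taking the supremum over $t$, then adjusting constants $K, C, N_0$ (and shrinking $K$ to dominate the prefactor $1+\epsilon^{-2}$ against the exponential) gives exactly the claimed bound. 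The only subtlety in this last step is matching the ranges of validity ($N$ large relative to a power of $\epsilon^{-1}$) between the two contributions, which is automatic since both require a condition of the same form $N \ge N_0\max(\epsilon^{-(d'+2)},1)$.
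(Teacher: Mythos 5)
Your phases (a) and (c) match the paper's strategy (Proposition \ref{prop:Smoment}(ii) plus Theorem \ref{T:BGV1.1}), but phase (b) — which is the heart of the matter — deviates from the paper and, as sketched, has a genuine gap. The paper's Proposition \ref{prop:W1PP} is \emph{not} a probabilistic tail bound on the coupling discrepancy: it is a pathwise, deterministic inequality $\mathcal{W}_1(\mu^N_t,\mu_t)\le \mathcal{W}_1(\nu^N_t,\mu_t)+c\int_0^t e^{(r_1-\tilde\lambda)(t-s)}\mathcal{W}_1(\nu^N_s,\mu_s)\,ds$, obtained by running the coupling estimate for $F(t)=\frac1N\sum_i|\Delta^i_t|^2$ \emph{without} taking expectations and converting the fluctuation term $\int \nabla P_{t-s}g(y-\bar X^i_t)\,(\nu^N_s-\mu_s)(dy)$ into $\mathcal{W}_1(\nu^N_s,\mu_s)$ via Kantorovich--Rubenstein duality, then solving the resulting integral inequality by the same ODE comparison as in Theorem \ref{T:UniformPOC_PP}. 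This reduces the whole problem to tails of the i.i.d.\ empirical measures $\nu^N_s$ at past times, where Theorem \ref{T:BGV1.1} applies; handling the integral over $s\in[0,t]$ uniformly in $t$ is then done as in \cite[Section 7.2]{BGV05}, using the exponential weight (Assumption \ref{A:A} gives $r_1-\tilde\lambda<0$) together with the time-regularity estimates of Section \ref{s:4.3.3} (Lemma \ref{L:exp_st}, Proposition \ref{prop:nu_st}, and \eqref{E:W_st}) and a union bound over a time grid — this is in fact where the prefactor $1+\epsilon^{-2}$ comes from, rather than being something one ``dominates'' afterwards.

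Your alternative route for (b) — proving directly that $\mathbb{P}\big(\frac1N\sum_i|X^{i,N}_t-\bar X^i_t|>\epsilon/2\big)\le Ce^{-KN\epsilon^2}$ via exponential moments of the coupled discrepancy controlled by ``BDG/Novikov estimates for the stochastic integral terms'' — does not work as described. In the synchronous coupling the Brownian motions cancel, so $d(X^{i,N}_t-\bar X^i_t)$ contains no stochastic integral at all; the only source of randomness driving the discrepancy is the empirical fluctuation $\nu^N_s-\mu_s$, which is exactly what the paper's KR-duality step isolates. Consequently your sketch identifies no mechanism producing the factor $N$ in the exponent: since the $\Delta^i$ are dependent (coupled through the whole history of $\mu^N$), a Chernoff bound at scale $e^{-KN\epsilon^2}$ would require something like $\sup_{N,t}\E\exp\big(c\sum_{i=1}^N|\Delta^i_t|^2\big)<\infty$ for a fixed $c>0$, and neither the Gronwall argument nor the square-exponential moments of phase (a) yield such a bound. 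Unless you supply that (nontrivial) concentration estimate, phase (b) is an assertion, not a proof; the paper's pathwise comparison plus the BGV time-discretization argument is the way this step is actually closed.
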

We note that $C$ and $N_0$ may depend on $d'$ but not on $\eps$.\\ 

Proofs of Theorems \ref{T:Concen_PP} and \ref{T:Concen_PP_longtime} will be given in Section \ref{sec:sec-concbd}.
\subsection{Uniform convergence of Euler scheme}
In this section  we will introduce an Euler approximation for the collection of SDE in \eqref{E:PKS_PP_N}
which can be used for approximate simulation of the system.  We show that the approximation error converges to $0$ as the time discretization size $\eps$ converges to $0$, {\em uniformly in time}. As a consequence it will follow that
the empirical measure of the particle states in the approximate system converges to the law of the nonlinear process, uniformly in time, as $N\to \infty$ and $\eps \to 0$ (Corollary \ref{cor:EScheme}).


Note that $Q_t$ has transition density $q(t,x,y)=e^{-\alpha t}p( t,x,y)$ with respect to Lebesgue measure, where $p(t,x,y)$ is the standard Gaussian kernel. 
Using \eqref{E:h_PP_N}, the system of equations governing the particle system $X^{(N)} = (X^{i,N})_{i=1}^N$ in \eqref{E:PKS_PP_N}  can be written as
\begin{equation}\label{PPN2}
dX_t^{i,N} = dB_t^i + \Big(\int_0^t G_{t-s}(X_s^{(N)},X_t^{i,N})\,ds - \nabla V_t(X_t^{i,N})
\Big)\,dt 
\end{equation}
for $1\leq i\leq N$, where 
\begin{equation}
\label{eq:eq226}
V_t = V-\chi Q_t h_0
\end{equation}
and for $\vec{x} = (x_1,\dots,x_N)\in \R^{dN}$ and $y\in \R^d$, 
\begin{align}\label{G_theta}
G_\theta(\vec{x},y) 
&= \frac{\chi\beta}{N}\sum_{i=1}^N\int_{\R^d} \nabla_y q(\theta,y,z)g(x_i-z)\,dz.
\end{align}
We now define an explicit Euler scheme for  \eqref{PPN2} with step size $\epsilon \in (0,1)$. Let $Y_0^{(N),\epsilon} = X_0^{(N)}$. Having defined $Y_n^{(N),\epsilon} = (Y_n^{i,N,\epsilon})_{i=1}^{N}$ for some $n\ge 0$,  we define $Y_{n+1}^{(N),\epsilon}$ naturally as
\begin{equation} \label{EPPN2}
Y_{n+1}^{i,N,\epsilon} \doteq Y_n^{i,N,\epsilon}+\Delta_nB^i + \epsilon\,\Big( \int_{0}^{n\epsilon}G_{n\epsilon-s}(\tilde{Y}_s^{(N),\epsilon},\,Y_n^{i,N,\epsilon})\,ds - \nabla V_{n\epsilon}(Y_n^{i,N,\epsilon})\Big)
\end{equation}
for $1\leq i\leq N$,
where $\Delta_n B^i \doteq B_{(n+1)\epsilon}^i-B_{n\epsilon}^i$  and
\begin{equation*}
\tilde{Y}_s^{(N),\epsilon} \doteq Y_k^{(N),\epsilon} \quad\text{for }s\in[k\epsilon,(k+1)\epsilon).
\end{equation*}  
Note that the integral on the right hand side of  \eqref{EPPN2} can be written as
$$\int_{0}^{n\epsilon}G_{n\epsilon-s}(\tilde{Y}_s^{(N),\epsilon},\,Y_n^{i,N,\epsilon})\,ds= \sum_{k=0}^{n-1} \int_{k\epsilon}^{(k+1)\epsilon}G_{n\epsilon-s}(Y_k^{(N),\epsilon},\,Y_n^{i,N,\epsilon})\,ds.$$
Thus in order to evaluate a typical Euler step, one needs to compute terms of the form
$\int_{[k\eps, (k+1)\eps]} G_\theta(\vec{x},y) d\theta$ which can be done using numerical integration.

Our goal is to provide uniform in time estimates on the mean square error of the scheme, namely to estimate the quantity 
$$\E\big| Y_n^{i,N,\epsilon}-X_{n\epsilon}^{i,N}\big|^2.$$
For that we begin by establishing moment bounds for the Euler scheme which are uniform in $N$, step size $\epsilon$ and  time instant $n$. 
Recall $v_*$ introduced in \eqref{eq:eq951}. Also recall that $\mu_0 \in \clp_2(\R^d)$.

\begin{lemma}\label{L:Ybound}
	Suppose $v_*>0$. Then there exists $\epsilon_0\in (0,1)$ such that
	\begin{equation*}
	\sup_{\epsilon\in (0,\,\epsilon_0)}\sup_{N\in \mathbb{N}_0} \sup_{1\le i\le N} \sup_{n\ge0} \E|Y_n^{i,N,\epsilon}|^2<\infty.
	\end{equation*}
\end{lemma}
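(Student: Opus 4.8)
The plan is to establish a geometric (one-step) recursion for $\E|Y_n^{i,N,\epsilon}|^2$, of the form $a_{n+1}\le (1-\epsilon v_*/2)\,a_n + \epsilon C$, and then iterate. The first ingredient I would isolate is that the non-$\nabla V$ part of the drift in \eqref{EPPN2} is bounded by a deterministic constant. Recalling $V_{n\epsilon}=V-\chi Q_{n\epsilon}h_0$ from \eqref{eq:eq226} and that $\nabla(Q_t h_0)=Q_t(\nabla h_0)$, I would write the Euler drift as $A_n^{i}-\nabla V(Y_n^{i,N,\epsilon})$ with
\[
A_n^{i}\doteq \int_0^{n\epsilon} G_{n\epsilon-s}\big(\tilde Y_s^{(N),\epsilon},\,Y_n^{i,N,\epsilon}\big)\,ds+\chi\,(Q_{n\epsilon}\nabla h_0)(Y_n^{i,N,\epsilon}).
\]
From the definition of $G_\theta$ in \eqref{G_theta}, moving $\nabla_y$ past the heat semigroup gives $|G_\theta(\vec x,y)|\le \chi\beta\,e^{-\alpha\theta}\|\nabla g\|_\infty$ for all $\vec x\in\R^{dN}$, $y\in\R^d$; integrating in time yields $\big|\int_0^{n\epsilon}G_{n\epsilon-s}(\tilde Y_s^{(N),\epsilon},\cdot)\,ds\big|\le \chi\beta\|\nabla g\|_\infty/\alpha$ (this is in effect Lemma \ref{L:Properties_PPN} applied to the empirical measure of the interpolated trajectories), while $|\chi(Q_{n\epsilon}\nabla h_0)(\cdot)|\le \chi\|\nabla h_0\|_\infty$. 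Hence $|A_n^{i}|\le C_A$ for a constant $C_A$ independent of $n,N,i,\epsilon$ and of the positions of the other particles. I would also record that, since $\nabla V$ is Lipschitz with $\nabla V(0)=0$, one has $|\nabla V(y)|\le L_{\nabla V}|y|$ and, from \eqref{eq:eq951} taken at $x=y$, $y'=0$, $\langle y,\nabla V(y)\rangle\ge v_*|y|^2$.

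Next I would derive the recursion. Since $\Delta_n B^i$ is independent of $\mathcal{F}_{n\epsilon}$ with mean zero and covariance $\epsilon I_d$, conditioning gives
\[
\E|Y_{n+1}^{i,N,\epsilon}|^2=\E\big|Y_n^{i,N,\epsilon}+\epsilon\,(A_n^{i}-\nabla V(Y_n^{i,N,\epsilon}))\big|^2+d\,\epsilon .
\]
Expanding the square and using $2\epsilon\langle Y_n^{i},A_n^{i}\rangle\le \epsilon v_*|Y_n^{i}|^2+\epsilon C_A^2/v_*$ (Young), $-2\epsilon\langle Y_n^{i},\nabla V(Y_n^{i})\rangle\le-2\epsilon v_*|Y_n^{i}|^2$, and $\epsilon^2|A_n^{i}-\nabla V(Y_n^{i})|^2\le 2\epsilon^2 C_A^2+2\epsilon^2 L_{\nabla V}^2|Y_n^{i}|^2$, I obtain for $\epsilon<1$
\[
\E|Y_{n+1}^{i,N,\epsilon}|^2\le\big(1-\epsilon v_*+2\epsilon^2 L_{\nabla V}^2\big)\,\E|Y_n^{i,N,\epsilon}|^2+\epsilon\,C_1,\qquad C_1\doteq \tfrac{C_A^2}{v_*}+2C_A^2+d .
\]
Since $v_*>0$ forces $L_{\nabla V}>0$ (because $|v_*|\le L_{\nabla V}$), I may take $\epsilon_0\doteq\min\{1,\,v_*/(4L_{\nabla V}^2)\}>0$, and then for $\epsilon\in(0,\epsilon_0)$ the coefficient is at most $\rho_\epsilon\doteq 1-\epsilon v_*/2\in(0,1)$.

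Finally I would iterate from $n=0$ and sum the geometric series:
\[
\E|Y_n^{i,N,\epsilon}|^2\le\rho_\epsilon^{\,n}\,\E|Y_0^{i,N,\epsilon}|^2+\epsilon C_1\sum_{k\ge0}\rho_\epsilon^{\,k}=\E|\xi_0^{i,N}|^2+\frac{\epsilon C_1}{1-\rho_\epsilon}=\int_{\R^d}|x|^2\,d\mu_0(x)+\frac{2C_1}{v_*},
\]
using that $Y_0^{i,N,\epsilon}$ has law $\mu_0\in\clp_2(\R^d)$. The right-hand side is finite and independent of $n$, $N$, $i$ and $\epsilon\in(0,\epsilon_0)$, which is the assertion of the lemma.

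The main obstacle — and the exact reason the hypothesis $v_*>0$ is needed — is the term $2\epsilon^2 L_{\nabla V}^2|Y_n^{i}|^2$: this is the discretization error produced by the \emph{explicit} Euler treatment of $\nabla V$, and it carries the destabilizing sign. It has to be dominated by the order-$\epsilon$ dissipation $-\epsilon v_*|Y_n^{i}|^2$, which is precisely what the smallness restriction $\epsilon<\epsilon_0\sim v_*/L_{\nabla V}^2$ buys; this is also why one cannot simply quote a continuous-time a priori moment bound. The only other point to watch is that the additive term in the recursion is $O(\epsilon)$, matching the $O(\epsilon)$ spectral gap $1-\rho_\epsilon$, so that the geometric sum $\epsilon/(1-\rho_\epsilon)=2/v_*$ stays bounded as $\epsilon\to0$; everything else (the uniform bound on the interaction drift regardless of where the other particles are) is routine and follows from Lemma \ref{L:Properties_PPN}.
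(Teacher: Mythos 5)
Your proof is correct and follows essentially the same route as the paper: a uniform bound on the interaction drift via $|G_\theta(\vec x,y)|\le \chi\beta e^{-\alpha\theta}\|\nabla g\|_\infty$ and on $\nabla Q_t h_0$, the coercivity $\langle y,\nabla V(y)\rangle\ge v_*|y|^2$ producing an order-$\epsilon$ dissipation, a one-step recursion of the form $a_{n+1}\le(1-\epsilon v_*/2)a_n+C\epsilon$ (the paper keeps a $\kappa\epsilon\sqrt{a_n}$ term and removes it by the same Young inequality you apply one step earlier), and a geometric summation with the $\epsilon^2$ explicit-Euler error absorbed by taking $\epsilon$ small. The only cosmetic difference is that you carry $a_0=\int|x|^2\,d\mu_0<\infty$ through the iteration (which is in fact the careful way to treat the initial condition), whereas the paper takes $a_0=0$.
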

We now present our main result on the uniform convergence of the Euler scheme.
For this result we will make the stronger convexity assumption in Assumption \ref{A:A}. 

\begin{theorem}\label{EScheme}
	Suppose Assumption \ref{A:A} holds.
	Then there exists $\epsilon_0 \in(0,1)$ and $C\in(0,\infty)$ such that 
	\begin{equation}\label{E:EScheme}
	\sup_{N\ge1}\sup_{1\le i\le N}\sup_{n\in \mathbb{N}_0} \E\big| Y_n^{i,N,\epsilon}-X_{n\epsilon}^{i,N}\big|^2 \le C\,\epsilon
	\end{equation}
	for all $\epsilon\in(0,\,\epsilon_0)$.
\end{theorem}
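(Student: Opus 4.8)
The plan is to run a discrete Gronwall argument on $\phi_n \doteq \E|Y_n^{i,N,\epsilon} - X_{n\epsilon}^{i,N}|^2$, which by exchangeability of the coupled family $\big((X^{i,N},Y^{i,N,\epsilon})\big)_{i=1}^N$ does not depend on $i$. Writing $e_n^i \doteq Y_n^{i,N,\epsilon}-X_{n\epsilon}^{i,N}$ and subtracting \eqref{EPPN2} from the exact increment of \eqref{PPN2} over $[n\epsilon,(n+1)\epsilon]$, the Brownian increments cancel and one gets $e_{n+1}^i = e_n^i + \int_{n\epsilon}^{(n+1)\epsilon}(\hat b_n^i - b_t^i)\,dt$, where $b_t^i \doteq \int_0^t G_{t-s}(X_s^{(N)},X_t^{i,N})\,ds - \nabla V_t(X_t^{i,N})$ is the exact drift and $\hat b_n^i \doteq \int_0^{n\epsilon} G_{n\epsilon-s}(\tilde Y_s^{(N),\epsilon},Y_n^{i,N,\epsilon})\,ds - \nabla V_{n\epsilon}(Y_n^{i,N,\epsilon})$ the frozen Euler drift. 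Expanding $|e_{n+1}^i|^2$, the term $\E\big|\int_{n\epsilon}^{(n+1)\epsilon}(\hat b_n^i-b_t^i)\,dt\big|^2$ is $O(\epsilon^2)$ by Cauchy--Schwarz together with the a priori bounds $\sup_{t,N,i}\E|X_t^{i,N}|^2<\infty$ (which follows from $v_*>0$, the bound $\|\nabla h_N\|_\infty\le C$ of Lemma \ref{L:Properties_PPN}, and It\^o's formula applied to $|X_t^{i,N}|^2$) and $\sup_{\epsilon,N,i,n}\E|Y_n^{i,N,\epsilon}|^2<\infty$ (Lemma \ref{L:Ybound}); the same bounds give the increment estimate $\E|X_t^{i,N}-X_{n\epsilon}^{i,N}|^2\le C\epsilon$ for $t\in[n\epsilon,(n+1)\epsilon]$, which is used throughout.

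The crux is the cross term $2\int_{n\epsilon}^{(n+1)\epsilon}\E\langle e_n^i,\hat b_n^i-b_t^i\rangle\,dt$. First I would record the regularity of $G_\theta$ obtained by transferring the gradient from the Gaussian kernel onto $g$, namely $G_\theta(\vec x,y)=-\frac{\chi\beta}{N}e^{-\alpha\theta}\sum_i\big(P_\theta[\nabla g(x_i-\cdot)]\big)(y)$: this gives $\|G_\theta\|_\infty\le\chi\beta e^{-\alpha\theta}\|\nabla g\|_\infty$, Lipschitz constant $\chi\beta d\|\Hess g\|_\infty e^{-\alpha\theta}$ in $y$, averaged Lipschitz constant $\frac{\chi\beta d}{N}\|\Hess g\|_\infty e^{-\alpha\theta}$ in $\vec x$, and (using $\|(P_h-\mathrm{Id})\nabla g\|_\infty\le C\sqrt h\,\|\Hess g\|_\infty$) the time-modulus bound $\|G_\theta-G_{\theta'}\|_\infty\le Ce^{-\alpha\theta}\sqrt{|\theta-\theta'|}$; it is essential here to avoid the $\theta^{-1/2}$ blow-up of $\nabla_y q(\theta,\cdot,\cdot)$. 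Then I would split $\hat b_n^i-b_t^i$ into: (A) $\nabla V_{n\epsilon}(X_{n\epsilon}^{i,N})-\nabla V_{n\epsilon}(Y_n^{i,N,\epsilon})$, which paired with $e_n^i$ is $\le-(v_*-\chi d\|\Hess h_0\|_\infty)|e_n^i|^2$ by \eqref{eq:eq951}, $V_t=V-\chi Q_th_0$ and $\|\Hess Q_th_0\|_\infty\le\|\Hess h_0\|_\infty$; (B) the time-freezing error $\nabla V_t(X_t^{i,N})-\nabla V_{n\epsilon}(X_{n\epsilon}^{i,N})$, whose $L^2$-norm is $O(\sqrt\epsilon)$; and (C) the interaction error. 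The latter I would decompose further by freezing the $y$-variable from $Y_n^{i,N,\epsilon}$ to $X_{n\epsilon}^{i,N}$ (which, after integrating the weight $e^{-\alpha(n\epsilon-s)}$, contributes $\le\frac{\chi\beta d\|\Hess g\|_\infty}{\alpha}|e_n^i|^2$), replacing the history $\tilde Y_s^{(N),\epsilon}$ by $X_s^{(N)}$ (which contributes $\le\chi\beta d\|\Hess g\|_\infty\int_0^{n\epsilon}e^{-\alpha(n\epsilon-s)}\frac1N\sum_j\big(|e_{k(s)}^j|+|X_s^{j,N}-X_{k(s)\epsilon}^{j,N}|\big)\,ds$, with $k(s)=\lfloor s/\epsilon\rfloor$), changing the time index $t\to n\epsilon$ both in $G_{t-s}$ and in $X_t^{i,N}$, and discarding the tail $\int_{n\epsilon}^t G_{t-s}(\cdot)\,ds$. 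Every piece other than the two displayed $|e|^2$-terms has $L^2$-norm $O(\sqrt\epsilon)$ and hence contributes at most $C\sqrt{\phi_n}\,\sqrt\epsilon$ after Cauchy--Schwarz; the history term carrying the $e_{k(s)}^j$'s is handled by Jensen's inequality ($\E|\frac1N\sum_j|e_{k(s)}^j||^2\le\phi_{k(s)}$) and the bound $\big|\int_0^{n\epsilon}e^{-\alpha(n\epsilon-s)}\psi(s)\,ds\big|^2\le\frac1\alpha\int_0^{n\epsilon}e^{-\alpha(n\epsilon-s)}|\psi(s)|^2\,ds$.

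Collecting these estimates, and writing $\Phi_n\doteq\max_{0\le k\le n}\phi_k$, the above produces a recursion of the form
\[
\phi_{n+1}\le(1-2a\epsilon)\phi_n+\frac{2\chi\beta d\|\Hess g\|_\infty}{\sqrt\alpha}\,\epsilon\sqrt{\phi_n}\,\Big(\int_0^{n\epsilon}e^{-\alpha(n\epsilon-s)}\phi_{k(s)}\,ds\Big)^{1/2}+C\epsilon^{3/2}\sqrt{\phi_n}+C\epsilon^2,
\]
where $a=v_*-\chi d\|\Hess h_0\|_\infty-\frac{\chi\beta d\|\Hess g\|_\infty}{\alpha}$. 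Bounding $\phi_n\le\Phi_n$ and $\int_0^{n\epsilon}e^{-\alpha(n\epsilon-s)}\phi_{k(s)}\,ds\le\Phi_n/\alpha$ (valid once $2a\epsilon_0<1$) collapses this to $\phi_{n+1}\le(1-2(v_*-\lambda)\epsilon)\Phi_n+C\epsilon^{3/2}\sqrt{\Phi_n}+C\epsilon^2$, with $v_*-\lambda>0$ by Assumption \ref{A:A}. An induction then gives $\Phi_n\le M\epsilon$ for all $n$: the base case $\Phi_0=\phi_0=0$ is trivial, and for $M$ large enough that $-2(v_*-\lambda)M+C\sqrt M+C\le0$ the inductive step is immediate. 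This proves \eqref{E:EScheme} with $C=M$. The main obstacle is exactly the interplay displayed in the recursion: since the interaction drift depends on the entire past of the empirical measure, the one-step error feeds on an exponentially-weighted sum of all earlier errors, and the decomposition must be arranged so that, after dominating that sum by the running maximum $\Phi_n$, a strictly negative effective contraction rate $-(v_*-\lambda)$ survives --- which is precisely why Assumption \ref{A:A}, matching the constant $\lambda$ that governs the parabolic--elliptic theory, is the right hypothesis. A secondary point is that all the space--time integrals produced by the time-discretization of $G$ (the $\int_{n\epsilon}^t$ tail, the $t\to n\epsilon$ change in $G_\theta$, and the $\theta^{-1/2}$ behaviour of $\nabla_y q$) must be controlled via the $e^{-\alpha\theta}$ decay so that the resulting bounds are uniform in $n$.
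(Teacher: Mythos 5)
Your proposal is correct and it reaches exactly the paper's threshold, but it closes the argument by a genuinely different and more elementary route. The error decomposition itself runs parallel to the paper's Steps (i)--(iii): the same kernel estimates for $G_\theta$ after moving the gradient onto $g$ (boundedness, Lipschitz in $y$ and in the empirical variable, the $\sqrt{|\theta-\theta'|}$ time-modulus with the essential $e^{-\alpha\theta}$ factor), the same use of Lemma \ref{L:Ybound} and of the increment bounds of the type \eqref{Yincp}--\eqref{Xincp}, and the same effective constants --- your $a$ is the paper's $\tilde\lambda$, the memory term carries $\tilde C_2=\chi\beta d\|\Hess g\|_\infty$, and Assumption \ref{A:A} enters precisely as $\tilde\lambda>\tilde C_2/\alpha$, i.e.\ $v_*>\lambda$. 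Two differences are worth recording. First, you expand $|e_{n+1}^i|^2=|e_n^i|^2+2\langle e_n^i,\Delta_n\rangle+|\Delta_n|^2$ with the drift frozen at $n\epsilon$ and paired against $e_n^i$, accepting an $O(\epsilon^2)$ quadratic term; the paper instead uses the exact identity $|Z_n^i|^2-|Z_{n-1}^i|^2=(a_n^i+b_n^i)\cdot(Z_n^i+Z_{n-1}^i)$, which forces it to shift the drift between time points and pick up the $|Y_n^i-Y_{n-1}^i|$ increments --- your choice is slightly cleaner and equally valid since the target is $\E|e_n^i|^2=O(\epsilon)$. Second, and more substantively, the paper turns its difference--summation inequality \eqref{DiscreteDE} into a second-order linear difference equation (substitution $\delta=(1-\tilde\lambda\epsilon)/(1+\tilde\lambda\epsilon)$, perturbation $h_\theta(n)=\sqrt{g_n+\theta^2}$, explicit roots of $r^2-pr+q=0$, and the root asymptotics \eqref{claim} proved in the Appendix), whereas you apply Cauchy--Schwarz/Jensen to the memory term, dominate the exponentially weighted convolution by $\Phi_n/\alpha$ with $\Phi_n$ the running maximum (legitimate because the convolution only involves indices $k\le n-1$ and the coefficient $1-2a\epsilon$ is nonnegative for small $\epsilon$ --- note this positivity, together with $1-2(v_*-\lambda)\epsilon\ge 0$, is what your restriction on $\epsilon_0$ should be attached to), and close by induction on $\Phi_n\le M\epsilon$. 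Your route buys a much shorter proof of the uniform bound, which is all Theorem \ref{EScheme} asserts; the paper's explicit solution buys sharper structural information (geometric decay rates $\delta^{1/2}r_1<1$ and explicit constants) and keeps the argument in the same comparison-equation framework used for Theorem \ref{T:UniformPOC_PP} and Proposition \ref{prop:W1PP}.
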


It is important that the estimate in \eqref{E:EScheme} is uniform not only in time instant $n$ but also in the size of the system $N$. As a consequence one has the desired property that in order to control the mean square error for larger systems one does not need  smaller time discretization steps.  This in particular implies that the Euler scheme provides a good numerical approximation to the nonlinear process, uniformly in time. Namely we have the following result.
\begin{corollary}\label{cor:EScheme}
	Suppose Assumption \ref{A:A} holds. There exists $\eps_0 \in (0,1)$ and $C\in(0,\infty)$ such that for any positive integer $k$,
	\begin{equation*}
	\sup_{n\geq 0} \mathcal{W}_2\Big(\mathcal{L}(Y^{1,N,\epsilon}_n,\,Y^{2,N,\epsilon}_n,\,\cdots,\,Y^{k,N,\epsilon}_n),\;\mathcal{L}(\bar{X}_{n\epsilon})^{\otimes \,k}\Big) \leq C\sqrt{k}\,\Big(\sqrt{\epsilon}+\frac{1}{\sqrt{N}}\Big)
	\end{equation*}
	for all $\eps \in (0,\eps_0)$. Furthermore if $\mu_0 \in \clp_q(\R^d)$ for some $q\in(2,\infty)$.
	Then
	we have
	\begin{equation}\label{EmpiY}
	\limsup_{N\to\infty}\,\sup_{n\geq 1}\E[\mathcal{W}^2_2(\mu_n^{N,\eps}, \mu_{n\eps})]\leq 2\,C\epsilon
	\end{equation}
	for all  $\epsilon\in(0,\,\epsilon_0)$, where $\mu_n^{N,\eps} \doteq \frac{1}{N} \sum_{i=1}^N \delta_{Y^{i,N,\epsilon}_n}$.
\end{corollary}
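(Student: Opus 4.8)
The plan is to prove both assertions by bookkeeping: insert the particle system \eqref{E:PKS_PP_N} as an intermediate object between the Euler scheme and the nonlinear process, and then combine the Euler error bound of Theorem \ref{EScheme} with the uniform in time propagation of chaos estimates of Theorem \ref{T:UniformPOC_PP} and Corollary \ref{cor:UniformPOC_PE_2_1} via the triangle inequality for $\mathcal{W}_2$. Everything is realized on the common probability space of Section \ref{subsectionPOC}, carrying the Brownian motions $\{B^i\}$ and the i.i.d. initial data $\{\xi_0^{i,N}\}$ that drive simultaneously the Euler iterates $\{Y^{i,N,\epsilon}_n\}$ defined in \eqref{EPPN2}, the particle system $\{X^{i,N}_t\}$, and the coupled nonlinear processes $\{\bar X^i_t\}$. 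We take $\epsilon_0$ as in Theorem \ref{EScheme}.

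For the first assertion, fix $k$ and write $Y^{(k)}_n=(Y^{1,N,\epsilon}_n,\dots,Y^{k,N,\epsilon}_n)$, $X^{(k)}_{n\epsilon}=(X^{1,N}_{n\epsilon},\dots,X^{k,N}_{n\epsilon})$ and $\bar X^{(k)}_{n\epsilon}=(\bar X^1_{n\epsilon},\dots,\bar X^k_{n\epsilon})$, the last of which has law $\mathcal{L}(\bar X_{n\epsilon})^{\otimes k}$ by independence of the $\bar X^i$. By the triangle inequality,
$$\mathcal{W}_2\big(\mathcal{L}(Y^{(k)}_n),\mathcal{L}(\bar X_{n\epsilon})^{\otimes k}\big)\le \mathcal{W}_2\big(\mathcal{L}(Y^{(k)}_n),\mathcal{L}(X^{(k)}_{n\epsilon})\big)+\mathcal{W}_2\big(\mathcal{L}(X^{(k)}_{n\epsilon}),\mathcal{L}(\bar X_{n\epsilon})^{\otimes k}\big).$$
The second term is at most $C\sqrt{k}/\sqrt{N}$, uniformly in $n$, by Corollary \ref{cor:UniformPOC_PE_2_1}. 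For the first term, the (deterministic) coupling that pairs component $i$ with component $i$ is admissible, so $\mathcal{W}_2^2(\mathcal{L}(Y^{(k)}_n),\mathcal{L}(X^{(k)}_{n\epsilon}))\le \sum_{i=1}^k\E|Y^{i,N,\epsilon}_n-X^{i,N}_{n\epsilon}|^2\le k\,C\epsilon$ by Theorem \ref{EScheme}. Combining these, taking $\sup_{n\ge0}$ and relabeling the constant gives the first display.

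For the second assertion I would insert the empirical measure $\mu^N_{n\epsilon}=\frac1N\sum_{i=1}^N\delta_{X^{i,N}_{n\epsilon}}$ of the particle system. Using $(a+b)^2\le 2a^2+2b^2$ together with the triangle inequality for $\mathcal{W}_2$,
$$\E\big[\mathcal{W}_2^2(\mu^{N,\epsilon}_n,\mu_{n\epsilon})\big]\le 2\,\E\big[\mathcal{W}_2^2(\mu^{N,\epsilon}_n,\mu^N_{n\epsilon})\big]+2\,\E\big[\mathcal{W}_2^2(\mu^N_{n\epsilon},\mu_{n\epsilon})\big].$$
The coupling matching $Y^{i,N,\epsilon}_n$ with $X^{i,N}_{n\epsilon}$ is admissible for $(\mu^{N,\epsilon}_n,\mu^N_{n\epsilon})$, so the first term is at most $\frac2N\sum_{i=1}^N\E|Y^{i,N,\epsilon}_n-X^{i,N}_{n\epsilon}|^2\le 2C\epsilon$ by Theorem \ref{EScheme}, uniformly in $n$ and $N$. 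The second term is at most $2\sup_{t\ge0}\E[\mathcal{W}_2^2(\mu^N_t,\mu_t)]$, which tends to $0$ as $N\to\infty$ by the last part of Corollary \ref{cor:UniformPOC_PE_2_1}; this is the only place the stronger hypothesis $\mu_0\in\clp_q(\R^d)$, $q>2$, enters. Taking $\sup_{n\ge1}$ and then $\limsup_{N\to\infty}$ yields \eqref{EmpiY}.

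There is no genuine obstacle here beyond the care needed to ensure the three processes are built on one space with compatible couplings (which is exactly how $\{X^{i,N}\}$, $\{\bar X^i\}$ are constructed in Section \ref{subsectionPOC} and how $\{Y^{i,N,\epsilon}_n\}$ is defined in \eqref{EPPN2}) and remembering that the $\clp_q$ assumption is used only to invoke the $\mathcal{W}_2$-convergence of empirical measures in Corollary \ref{cor:UniformPOC_PE_2_1}; the rest is the triangle inequality and the already-established uniform bounds.
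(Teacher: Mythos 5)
Your proposal is correct and follows essentially the same route as the paper: the first bound is obtained by inserting $\mathcal{L}(X^{1,N}_{n\epsilon},\dots,X^{k,N}_{n\epsilon})$ and combining Theorem \ref{EScheme} (via the componentwise coupling) with Corollary \ref{cor:UniformPOC_PE_2_1}, and the second by the triangle inequality through $\mu^N_{n\epsilon}$, the empirical coupling bound $\mathcal{W}_2^2(\mu^{N,\epsilon}_n,\mu^N_{n\epsilon})\le \frac1N\sum_i|Y^{i,N,\epsilon}_n-X^{i,N}_{n\epsilon}|^2$, and the uniform in time LLN from Corollary \ref{cor:UniformPOC_PE_2_1} (where the $\clp_q$ hypothesis enters). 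This matches the paper's argument, including the origin of the factor $2$ in \eqref{EmpiY}.
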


Proofs of Lemma \ref{L:Ybound}, Theorem \ref{EScheme} and Corollary \ref{cor:EScheme} will be given in Section \ref{sec:secpfeuler}.

\bigskip
\section{Proofs}
\label{sec:sec4}
We will denote by $\kappa,\,\kappa_1,\,\kappa_2,\cdots$ the constants that appear in various estimates within
a proof. These constants only depend on the  model parameters or problem data, namely, $\alpha,\,\beta,\,\chi,\,g,\,V,\,h_0,d$ and $\mu_0$.
For estimates on a finite time horizon $[0,T]$, these constants may also depend on $T$ and in that case we use $\kappa_T,\,\kappa_{1,T},\,\kappa_{2,T},\cdots$ to denote such constants.
The value of such constants may change from one proof to another.

\subsection{Wellposedness}
\label{sec:wellposprf}

\begin{proof}[Proof of Lemma \ref{L:Properties_PPN}]
	From the definition of $h^m$ in \eqref{E:h^mu} and of the semigroup $\{Q_t\}$, we have for all $x, x_1, x_2 \in \R^d$ and $t \ge 0$,  uniform bounds
	\begin{align*}
	|h^m(t,x)|&\leq e^{-\alpha t}|P_{ t}h_0(x)| +\frac{\beta(1-e^{-\alpha t}) \|g\|_{\infty}}{\alpha}\\
	&\leq e^{-\alpha t}\|h_0\|_{\infty} + \frac{\beta\, \|g\|_{\infty}}{\alpha},\\
	|\nabla h^m(t,x)| &\leq e^{-\alpha t}\|\nabla h_0\|_{\infty} + \frac{\beta\, \|\nabla g\|_{\infty}}{\alpha},\\
	|\nabla h^m(t,x_1)-\nabla h^m(t,x_2)|
	&\leq |x_1-x_2|\,d\,\Big(  e^{-\alpha t}\|\Hess h_0\|_{\infty}+ \frac{\beta \,\|\Hess g\|_{\infty}}{\alpha}\Big).
	\end{align*}
	The result is immediate from the above inequalities.
\end{proof}

\begin{proof}[Proof of Proposition \ref{Wellpose_Chemotaxis_N}]
	For notational simplicity, we  suppress the index $N$ and write $X^{N,i}$ as $X_i$.
	
	\textbf{Uniqueness. }Suppose  $(X,\, h)$ and $(\tilde{X},\, \tilde{h})$ are two solutions to  \eqref{E:PKS_PP_N} with $h(0,\cdot)=\tilde{h}(0,\cdot)=h_0$, 
	and $X(0) = \tilde X(0)$,
	where $X=(X_1,\cdots,\,X_N)$ and $\tilde{X}=(\tilde{X}_1,\cdots,\,\tilde{X}_N)$. Letting $Y_i \doteq X_i-\tilde{X}_i$ and $H\doteq h-\tilde{h}$, we have
	\begin{eqnarray}
	Y_i(t) &=& \int^t_0 \left(-  \nabla V(X_i(s))+ \nabla V(\tilde{X}_i(s)) + \chi\nabla h(s,X_i(s))-\chi\nabla\tilde{h}(s,\tilde{X}_i(s))\right)\,ds, \label{E:Y_it}\\
	H(t,x) &=& \frac{\beta}{N}\sum^N_{i=1}\int^t_0\left(Q_{t-s}\big(g(X_i(s)-\cdot)-g(\tilde{X}_i(s)-\cdot)\big)(x)\right) ds.\label{E:H(t,x)}
	\end{eqnarray}
	
	From  \eqref{E:Y_it} we have for all $t\ge 0$
	\begin{align}
	\sup_{0\leq s\leq t}|Y_i(s)| 
	&\leq  d\int^t_0 \left(\|\Hess V\|_{\infty}\,\big|Y_i(s)\big|+\big|\nabla H(s, \tilde X_i(s))\big| + \chi \big|\nabla h(s, X_i(s))-\nabla h(s, \tilde{X}_i(s))\big|\right)\,ds \notag\\
	&\leq \kappa_1 \int^t_0 \left(\big|Y_i(s)\big|+  \|\nabla H(s)\|_{\infty}\right)\,ds,\label{E:1}
	\end{align}
	where $\|\nabla H(s)\|_{\infty} \doteq \sup_x|\nabla H(s,x)|$ and the last inequality follows 
	from Lemma \ref{L:Properties_PPN} on noting that $h$ equals $h^{\mu^N}$, where $\mu^N \doteq \frac{1}{N}\sum_{i=1}^N \delta_{X_i}$
	is the path empirical measure (see \eqref{E:h_PP_N}).
	From \eqref{E:H(t,x)}, the fact that $\nabla_xp(t,x,y)=-\nabla_yp(t,x,y)$ and integration by parts, we obtain
	\begin{eqnarray*}
		\nabla H(t,x) = \frac{\beta}{N}\sum^N_{i=1}\int^t_0 e^{-\alpha(t-s)}\int_{\R^d}p((t-s),x,y) \nabla_y\big(g(X_i(s)-y)-g(\tilde{X}_i(s)-y)\big)\,dy\,ds.
	\end{eqnarray*}
	Hence since $g \in \clc_b^2(\R^d)$,
	\begin{equation}\label{E:2}
	\|\nabla H(t)\|_{\infty}
	\leq \kappa_2\int^t_0 e^{-\alpha(t-s)} \max_{1\leq i\leq N}\big|Y_i(s)\big| ds.
	\end{equation}
	
	Combining \eqref{E:1} and \eqref{E:2}, and letting $Y(t)\doteq \max_{1\leq i\leq N}\big|Y_i(t)\big| $, we obtain
	\begin{align}
	\sup_{0\leq s\leq t}Y(s)&\leq  \kappa_3 \int^t_0 \left(Y(s) + \Big( \int^s_0e^{-\alpha(s-r)}Y(r)\,dr\Big) \right) ds\nonumber \\
	&\le \kappa_{4,t} \int^t_0  \sup_{0\le r \le s} Y(r) ds . \label{E:Y(s)}
	\end{align}
	This implies $Y(t)=0$ for all $t\geq0$. Finally, from \eqref{E:H(t,x)} we have $H(t,x)=0$ for all $t\ge 0$ and $x \in \R^d$. This completes the proof of pathwise uniqueness.\\
	
	\textbf{Existence. } This is argued by a minor modification of the standard Picard approximation method as follows. Define a sequence $\{(X^{(k)},\,h^{(k)})\}_{k\geq 1}$, where $X^{(k)}=(X^{(k)}_1,\cdots,\,X^{(k)}_N)$,
	of $\Crd \times \Cstar$ valued random variables as follows. Let $X^{(1)}(t)=(\xi_0^{1,N},\cdots,\xi_0^{N,N})$ and $h^{(1)}(t,x)=h_0(x)$ for all $t$. We then define, for $k\geq 2$,
	\begin{eqnarray*}
		X^{(k+1)}_i(t) &\doteq &  \xi_0^{i,N} + \int_0^t\left(-\nabla V(X^{(k)}_i(s)) + \nabla h^{(k)}(s, X^{(k)}_i(s))\right)\, ds + B^i_t,\quad i=1,\dots,\,N,\\
		h^{(k+1)}(t,x) &\doteq & Q_{t}\,h_0\,(x) + \frac{\beta}{N}\sum^N_{i=1}\int^t_0Q_{t-s}\big(g(X^{(k+1)}_i(s)-\cdot)\big)(x)\,ds,
	\end{eqnarray*}
	%
	Let $Y^{(k)}(t)\doteq \max_{1\leq i\leq N}\big|X^{(k+1)}_i(t)-X^{(k)}_i(t)\big|$.
	By similar estimates as that were used to obtain \eqref{E:Y(s)}, we have
	\begin{equation*}
	\sup_{0\leq s\leq t}Y^{(k)}(s)\leq  \kappa_5 \int^t_0 \left(Y^{(k-1)}(s) + \Big( \int^s_0e^{-\alpha(s-r)}Y^{(k-1)}(r)\,dr\Big)\right) \,ds.
	\end{equation*}
	Hence, for fixed $T>0$ and $t \in[0,T]$,
	\begin{equation*}
	\Big(\sup_{0\leq s\leq t}Y^{(k)}(s)\Big)^2\leq  \kappa_{6,T} \int^t_0 (Y^{(k-1)}(s))^2  \,ds .
	\end{equation*}
	A standard iteration argument  then yields
	\begin{equation*}
	\E\Big[\Big(\sup_{0\leq s\leq t}Y^{(k+1)}(s)\Big)^2\Big]\leq  C_T\,\frac{(\kappa_{7,T})^k}{k!};\quad 0\leq t\leq T,\,k\geq 1,
	\end{equation*}
	where $C_T=\max_{1\leq i\leq N}\sup_{0\leq t\leq T}\E[|X^{(2)}_i(t)-X^{(1)}_i(t)|^2]$ is finite from the  uniform boundedness of $\nabla h_N$ proved in Lemma \ref{L:Properties_PPN} and the Lipschitz property
	of $\nabla V$. From this  we conclude that $\{X^{(k)}(t)\}_{0\le t \le T}$ converges a.s in $\clc([0,T]:\R^{Nd})$ to a continuous process
	$\{X(t)\}_{0\le t \le T}$.
	On other hand, using estimates similar to those used in obtaining \eqref{E:2} we have 
	\begin{equation*}
	\|h^{(k+1)}(t)-h^{(k)}(t)\|_{\infty} + \|\nabla h^{(k+1)}(t)-\nabla h^{(k)}(t)\|_{\infty}\leq \kappa_8 \int^t_0 e^{-\alpha(t-s)}\sup_{0\le r \le s}Y^{(k)}(r)  \,ds.
	\end{equation*}
	From this it follows that for every $t \in [0,T]$, $h^{(k)}(t, \cdot)$ converges uniformly to a continuously differentiable function $h(t, \cdot)$  and
	$\nabla h^{(k)}(t,\cdot)$ converges uniformly to $\nabla h(t, \cdot)$. Furthermore the convergence is uniform in $t \in [0,T]$, namely
	$$
	\sup_{0\le t \le T} \left(\|h^{(k)}(t)-h(t)\|_{\infty} + \|\nabla h^{(k)}(t)-\nabla h(t)\|_{\infty}\right) \to 0$$
	as $k \to \infty$.
	It is easy to verify that $h \in \Cstar$ and
	\begin{align*}
	X_i(t) &= \xi_0^{i,N} + \int_0^t\left(-\nabla V(X_i(s)) + \nabla h(s, X_i(s))\right)\, ds + B^i_t,\quad i=1,\dots,\,N,\\
	h(t,x) &=  Q_{t}\,h_0\,(x) + \frac{\beta}{N}\sum^N_{i=1}\int^t_0Q_{t-s}\big(g(X_i(s)-\cdot)\big)(x)\,ds.
	\end{align*}
	This establishes the desired existence of solutions.
\end{proof}

\begin{proof}[Proof of Proposition \ref{Wellpose_PP}]
	The proof uses classical arguments from  \cite{Szn91}.
	It suffices to show that for each $T>0$, equation \eqref{E:NonlinearProcessPP}
	has a unique solution over the time horizon $[0,T]$ which belongs to 
	$\mathcal{C}([0,T]:\R^d)\times \mathcal{C}^*([0,T]\times\R^d)$. Let $T>0$ be arbitrary.
	We note that a probability measure $m \in \mathcal{P}(\mathcal{C}^T)$ can be mapped naturally to a $\hat m \in \mathcal{P}(\mathcal{C}([0,\infty):\mathbb{R}^{d}))$ as $\hat m \doteq m \circ [\pi^T]^{-1}$ where
	$\pi^T: \mathcal{C}([0,T]:\mathbb{R}^{d}) \to \mathcal{C}([0,\infty):\mathbb{R}^{d})$ is defined
	as $(\pi^Tw)(s) \doteq w(s\wedge T)$ for $s \ge 0$, $w \in \mathcal{C}([0,T]:\mathbb{R}^{d})$. Abusing notation
	we denote for $t \in [0,T]$ $\Theta^{\hat m}_t$ as $\Theta^{m}_t$ where $\Theta_t^{m}$ is defined in \eqref{Def:Theta}.

	Define $\Phi:\, \mathcal{P}(\mathcal{C}^T) \to \mathcal{P}(\mathcal{C}^T)$ which maps $m$ to the law $\mathcal{L}(Z)$, where $Z=(Z_t)_{t\in[0,T]}$ is the solution of
	\begin{equation}\label{E:PP_Zm}
	Z_t = \xi_0 + B_t + \int^t_0\left[- \nabla V(Z_s) +  \chi\,\nabla h^m(s,Z_s)\right]\,ds,\quad t\in [0,T],
	\end{equation}
	and $h^m$ is as in \eqref{E:h^mu}. From Lemma \ref{L:Properties_PPN} $\nabla h^m$ is a Lipschitz map and by assumption $\nabla V$ is Lipschitz as well, thus the equation in \eqref{E:PP_Zm} has a unique solution and consequently
	the function $\Phi$ is well-defined.
	Observe that $(X,\,h) $ is a solution of \eqref{E:NonlinearProcessPP} over $[0,T]$ if and only if  $\mathcal{L}(X)\in \mathcal{P}(\mathcal{C}^T)$ is a fixed point of $\Phi$ and $h$ is given by the right hand side of \eqref{E:h_PP} with $\mu_t$ being the law of $X_t$.
	We will show that for all $m^1,\,m^2 \in \mathcal{P}(\mathcal{C}^T)$, we have
	\begin{equation}\label{T:Contraction_McV}
	D_t(\Phi(m^1),\Phi(m^2)) \leq \kappa_T\,\int^t_0D_s(m^1,m^2)\,ds,\quad t\in[0,T],
	\end{equation}
	for some  $\kappa_T\in(0,\infty)$ where  $D_t$ is the  Wasserstein-1 distance on $\mathcal{P}(\mathcal{C}^t)$, namely, for
	$m^1, m^2 \in \mathcal{P}(\mathcal{C}^t)$, $D_t(m^1,m^2)$ is given by the right side of \eqref{eq:eq909} with
	$p=1$ and $S = \mathcal{C}^t$.
	Suppose for $i=1,2$, $Z^i$ solves \eqref{E:PP_Zm} with $m$ replaced by $m^i$ on the right side where $m^i \in \mathcal{P}(\mathcal{C}^T)$. Then, $Z^i$ has law $\Phi(m^i)$ and
	\begin{eqnarray}\label{E:Wellpose_MKV}
	\sup_{s\leq t}|Z^1_s-Z^2_s|& \leq& \int^t_0\Big[|\nabla V(Z^2_s)- \nabla V(Z^1_s)| +  \,\chi\,|\nabla Q_sh_0(Z^1_s)- \nabla Q_s h_0(Z^2_s)| \notag\\
	&&\qquad  +\beta \,\chi\,|\nabla\Theta^{m^1}_s(Z^1_s)-\nabla\Theta^{m^2}_s(Z^2_s)|\Big]\;ds
	\end{eqnarray}
	From properties of the heat semigroup  $|\nabla Q_s h_0(x)-\nabla Q_s h_0(y)| \leq e^{-\alpha s}|x-y| d \|\Hess h_0\|_{\infty}$ and
	\begin{align*}
	& |\nabla\Theta^{m^1}_t(x)-\nabla\Theta^{m^2}_t(y)|\\
	&\quad \leq  \Big|\int^t_0\int_{\R^d} q(t-s,x,z)
	\<\nabla_zg(\cdot-z), m^1_s\>-q(t-s,y,z)
	\<\nabla_zg(\cdot-z),m^2_s\>\,dz\,ds \,\Big|\\
	& \quad \leq \Big|\int^t_0\int_{\R^d} q(t-s,x,z)\<\nabla_zg(\cdot-z),m^1_s\>
	-q(t-s,y,z)\<\nabla_zg(\cdot-z),m^1_s\>\,dz\,ds \,\Big|\\
	& \quad\quad+\int^t_0\int_{\R^d}q(t-s,y,z)\,\big|\<\nabla_zg(\cdot-z),\;m^1_s-m^2_s\>\big|\,dz\;ds\\
	&\quad\leq  d\|\Hess g\|_{\infty}|x-y|\int^t_0e^{-\alpha(t-s)} \,ds\\
	&\quad\quad+\int^t_0\int_{\R^d}q(t-s,y,z)\,\big|\<\nabla_zg(\cdot-z),\; m^1_s-m^2_s\>\big|\,dz\;ds\\
	&\quad\leq \kappa\,\bigg(
	|x-y|+
	\int^t_0 e^{-\alpha(t-s)}\int_{\R^d\times\R^d}|w_1(s)- w_2(s)|\,dM(w_1,w_2)\;ds\bigg),
	\end{align*}
	for any $M \in \mathcal{P}(\mathcal{C}^t\times \mathcal{C}^t)$ with marginals  $m^1$ and $m^2$,
	where the last step uses the fact that $y \mapsto \nabla_zg(y-z)$ is Lipschitz.
	Combining the above estimates with \eqref{E:Wellpose_MKV} and using the Lipschitz property of
	$\nabla V$, we obtain
	\begin{equation*}
	\sup_{s\leq t}|Z^1_s-Z^2_s|
	\leq \kappa_1\int^t_0 \left[|Z^1_s-Z^2_s|+\, \int_{\R^d\times\R^d}\sup_{r\leq s}|w_1(r)- w_2(r)|\,dM(w_1,w_2)\right]\;ds
	\end{equation*}
	for any $M$ as above. Hence
	\begin{equation*}
	\sup_{s\leq t}|Z^1_s-Z^2_s| \leq \kappa_2\int^t_0\left[|Z^1_s-Z^2_s|+ D_s(m^1,m^2)\right]\;ds.
	\end{equation*}
	By Gronwall's Lemma, it now follows that
	\begin{equation*}
	\sup_{s\leq t}|Z^1_s-Z^2_s|\leq \kappa_{3,T}\,\int^t_0D_s(m^1,m^2)ds,\quad t\leq T,
	\end{equation*}
	Taking expectations we obtain \eqref{T:Contraction_McV}.	
	Now by a standard fixed point argument, there exists a unique $m^{*}\in \mathcal{P}(\mathcal{C}^T)$ such that $m^{*}=\Phi(m^{*})$. Let $Z^{*}$ be the unique solution to \eqref{E:PP_Zm} with $m$ replaced by $m^{*}$. Then \eqref{E:NonlinearProcessPP} has a unique pathwise solution $(Z^{*},\,h^{*})$ where  $h^{*}$ is given by the right hand side of \eqref{E:h_PP} with $\mu_t(dy)$ being the law of $Z^{*}_t$.
\end{proof}


\subsection{Propagation of chaos}
\label{sec:poc}
The proofs of Theorems \ref{T:POC_PP} and \ref{T:UniformPOC_PP} are based on breaking the pathwise deviation  
$$\Delta^i_s\doteq X^{i,N}_s - \bar{X}_s^i$$ 
into several manageable terms.
For any $N$ and $i$, integration by parts  yields
\begin{align*}
\,d|\Delta^i_t|^2
=&\, 2 \,\Delta^i_t\cdot \Big[ - \nabla V(X^{i,N}_t) + \nabla V (\bar{X}_t^i) +
\chi \left(\nabla h^{\mu^N}(t, X^{i,N}_t) - \nabla h^{\mu}(t, \bar X^{i}_t)\right)\Big]\, dt
\end{align*}
Note that, with $v_*$ as in \eqref{eq:eq951}
$$\Delta^i_t\cdot \left[- \nabla V(X^{i,N}_t) + \nabla V (\bar{X}_t^i)\right] \le -v_* |\Delta^i_t|^2.$$
Next, from \eqref{E:h^mu}
\begin{align}
&\nabla h^{\mu^N}(t, X^{i,N}_t) - \nabla h^{\mu}(t, \bar X^{i}_t)\nonumber \\
&\quad= \left( \nabla Q_th_0(X^{i,N}_t) - \nabla Q_th_0(\bar X^{i}_t) \right)\nonumber\\
&\quad \quad+ \beta \, \int_0^t\left[
\nabla Q_{t-s}\left( \int g(y-\cdot\,)\,\mu^N_s(dy)\right)(X^{i,N}_t)
- \nabla Q_{t-s}\left( \int g(y-\cdot\,)\,\mu_s(dy)\right)( \bar{X}_t^i)
\right]	\;ds .
\label{eq:eq935}
\end{align}
Since $h_0 \in \clc_b^2(\R^d)$, for the first term on the right side of \eqref{eq:eq935} we have
$$\Delta^i_t\cdot\left(\nabla Q_th_0(X^{i,N}_t) - \nabla Q_th_0(\bar X^{i}_t) \right)\le e^{-\alpha t} d \|\Hess h_0\|_{\infty} |\Delta^i_t|^2.
$$
Next note that for any $m \in \mathcal{P}(\R^d)$,
$$\nabla Q_{t-s}\Big( \int g(y-\cdot\,)\,m(dy)\Big)(x) = e^{-\alpha(t-s)} \int \nabla P_{t-s}g (y-x)\,m(dy).$$ 
For the second term on the right side in \eqref{eq:eq935} we will use the decomposition
\begin{align*}
& \int  \nabla P_{t-s}g (y-X^{i,N}_t)\,\mu^N_s(dy) - \int \nabla P_{t-s}g (y-\bar{X}_t^i)\,\mu_s(dy)  \\
&\quad=  \int  \nabla P_{t-s}g (y-X^{i,N}_t)\,\mu^N_s(dy) - \int \nabla P_{t-s}g (y-\bar{X}_t^i)\,\nu^N_s(dy)  \\
&\quad \quad + \int  \nabla P_{t-s}g (y-\bar{X}_t^i)\,\nu^N_s(dy) - \int \nabla P_{t-s}g (y-\bar{X}_t^i)\,\mu_s(dy),
\end{align*}
where $\nu^N$ is the empirical measure of $\{\bar X^i\}_{i=1}^N$, i.e. $\nu^N = \frac{1}{N} \sum_{i=1}^N \delta_{\bar X^i}$,
and $\nu^N_t$ is the marginal at time instant $t$.

From the above observations we have
\begin{align}
& d|\Delta^i_t|^2 \notag\\
& \quad \leq  \,\,2\,\Big[-v_*  \;+\;\chi\,e^{-\alpha\,t}d\,\|\Hess (h_0)\|_{\infty}\,\Big]\,|\Delta^i_t|^2\,dt  \notag\\
&\quad \quad +2\,\chi\,\beta\int_0^t\left(e^{-\alpha(t-s)} \Delta^i_t\cdot\left( \int  \nabla P_{t-s}g (y-X^{i,N}_t)\,\mu^N_s(dy) - \int \nabla P_{t-s}g (y-\bar{X}_t^i)\, \nu^N_s(dy) \right)\right)\,ds \;dt \notag\\
&\quad \quad+2\,\chi\,\beta\int_0^t e^{-\alpha(t-s)} \Delta^i_t\cdot \Big( \int  \nabla P_{t-s}g (y-\bar{X}_t^i)\,(\nu^N_s-\mu_s)(dy) \Big)\,ds\;dt, 	\label{E:UniformPOC_PP1}
\end{align}
where the above inequality is interpreted in the integral sense. That is, $d\phi_t\leq \psi_t\,dt$ is interpreted as $\phi_{b}-\phi_{a}\leq \int_a^b \psi_s\,ds$ for all $0\leq a\leq b$. In second term on the right of \eqref{E:UniformPOC_PP1}, the integrand has absolute value 
\begin{align}
&2 \chi \beta\frac{e^{-\alpha(t-s)}}{N}\Big| \sum_j \Delta^i_t\cdot\big[\,\nabla P_{t-s}g (X^{j,N}_s-X^{i,N}_t) -\nabla P_{t-s}g (\bar{X}_s^j-\bar{X}_t^i) \,\big]\,\Big|\notag\\
& \quad\leq\,2 \chi \beta\frac{e^{-\alpha(t-s)}\,d \|\Hess g\|_{\infty}}{N} \sum_j |\Delta^i_t|(|\Delta^i_t|+|\Delta^j_s|).\label{E:UniformPOC_PP2}
\end{align}
For the third term on the right of \eqref{E:UniformPOC_PP1}, we let 
$$a^{i,j}(s,t)\doteq \big(\,\nabla P_{t-s}g (\bar{X}_s^j-\bar{X}_t^i)  - \int \nabla P_{t-s}g (y-\bar{X}_t^i)\,\mu_s(dy) \,\big).$$
Then $\E[a^{i,j}(s,t)\,a^{i,k}(s,t)]=0$ for all $0\le s \le t$ whenever $j\neq k$ and hence
\begin{align}
\E\Big[\Big|  \int  \nabla P_{t-s}g (y-\bar{X}_t^i)\,(\nu^N_s-\mu_s)(dy)  \Big|^2\Big]&=\frac{1}{N^2}\E \Big[\Big|\sum_j a^{i,j}(s,t)\Big|^2 \Big] \nonumber \\
&= \frac{1}{N^2}\E \Big[\sum_{j,k} a^{i,j}(s,t)\cdot a^{i,k}(s,t) \Big]\nonumber\\
& = \frac{1}{N^2}\E \Big[\sum_{j} |a^{i,j}(s,t)|^2 \Big]\,
\leq \frac{2\,\|\nabla  g\|^2_{\infty}}{N}.\label{E:UniformPOC_PP3}
\end{align}

The proofs of Theorems \ref{T:POC_PP} and \ref{T:UniformPOC_PP}  will make use of the above calculations.\\

\begin{pf}{\it of Theorem \ref{T:POC_PP} }
	Fix $T \in (0,\infty)$.
	Letting $f^i(t)\doteq\sup_{s\in[0,t]}|\Delta^i_t|^2$, we have from \eqref{E:UniformPOC_PP1} on noting that $|v_*| \le L_{\nabla V}$,
	for all $T_1\in [0,T]$,
	\begin{align*}
	f^i(T_1) &\leq 
	2(\chi d \|\Hess h_0\|_{\infty} + L_{\nabla V})\int_0^{T_1} f^i(t) dt \\
	&\quad + \frac{2\chi \beta d\|\Hess g\|_{\infty}}{N} \sum_{j} \int_0^{T_1} \sqrt{f^i(t)} \int_0^t (\sqrt{f^i(t)} + \sqrt{f^j(s)}) ds\; dt\\
	&\quad + \frac{2\chi\beta}{N}\int_0^{T_1} \sqrt{f^i(t)} \int_0^t |\sum_j a^{i,j}(s,t)| ds\; dt\\
	&\le	\kappa\,\int_0^{T_1}\left[f^i(t) + t\,\Big(f^i(t)+\sqrt{f^i(t)}\frac{\sum_j\sqrt{f^j(t)}}{N}\Big) +\sqrt{f^i(t)} \int_0^t\frac{|\sum_j a^{i,j}(s,t)|}{N}\,ds\right]\;dt.
	\end{align*}
	Taking expectation, using Cauchy-Schwarz inequality, \eqref{E:UniformPOC_PP3} and the fact that
	$$\vartheta(t)\doteq\frac{\sum_{i}\E f^i(t)}{N}\geq \Big(\frac{1}{N}\sum_{i}\sqrt{\E f^i(t)}\Big)^2,$$
	we obtain
	\begin{align*}
	\vartheta(T_1) \leq \kappa_1\,\int_0^{T_1} \left[(1+2t)\,\vartheta(t) + \frac{t\,\sqrt{\vartheta(t)}}{\sqrt{N}}\right]\;dt
	\end{align*}
	and hence
	\begin{align*}
	\vartheta(T_1) \leq \kappa_{2,T}\,\int_0^{T_1}\left[\vartheta(t) + \frac{\sqrt{\vartheta(t)}}{\sqrt{N}}\right]\;dt
	\end{align*}
	for all $T_1\in[0,T]$. Note that $\Phi(z)\doteq z+\frac{\sqrt{z}}{\sqrt{N}}$, $z \in \R_+$, is an increasing function.	From Bihari's generalization of Gronwall's lemma (see Section 3 of \cite{B56})
	$$\vartheta(t) \le A^{-1} \left( A(0) + \kappa_{2,T} t\right), \mbox{ for all } t \in [0,T],$$
	where
	$$A(u) = \int_0^u \frac{ds}{\Phi(s)}, \; u\ge 0.$$
	Observing that
	$$A(u) = 2\log \left(\frac{N^{-1/2} + u^{1/2}}{N^{-1/2} }\right) \mbox{ and } A^{-1}(v) = \left((N^{-1/2} )(e^{v/2} - 1)\right)^2,$$
	we see that for all $t\in [0,T]$,
	$$\vartheta(t)\leq \bigg(\frac{(e^{\kappa_{2,T}\,t/2}-1)}{\sqrt{N}}\bigg)^2 \leq \frac{\kappa_{3,T}}{N}\quad\text{for }t\in[0,T].$$
	The proof is complete.
\end{pf}

\vspace{0.2in}

\begin{pf}{\it of Theorem \ref{T:UniformPOC_PP} }	
	For $i,j = 1, \cdots N$ and $0\le s \le t$
	\begin{equation}\label{E:UniformPOC_PP2a}
	\E[\,|\Delta^i_t|(|\Delta^i_t|+|\Delta^j_s|)\,]\leq \E|\Delta^i_t|^2 + \sqrt{\E|\Delta^i_t|^2\,\E|\Delta^j_s|^2}.
	\end{equation} 
	Combining  \eqref{E:UniformPOC_PP1}, \eqref{E:UniformPOC_PP2}, \eqref{E:UniformPOC_PP3} and \eqref{E:UniformPOC_PP2a}, we see that $f(t)\doteq \E |\Delta^i_t|^2$ satisfies  
	\begin{align}\label{E:UniformPOC_PP4}
	df(t)\, &\,\leq \,2\,\big( -v_*\,+\,\chi d\,\|\Hess h_0\|_{\infty}  \big)\,f(t)\; dt \notag\\
	&\qquad +2\,\chi\,\beta\,\int_0^te^{-\alpha(t-s)} d\|\Hess g\|_{\infty}\big(f(t)+\sqrt{f(t)\,f(s)} \,\big) \,ds \; dt \notag\\
	&\qquad+\frac{2\,\chi\,\beta}{N}\,\int_0^t e^{-\alpha(t-s)} \,\sqrt{f(t)\,2\,\|\nabla  g\|^2_{\infty}\,N}\;ds \; dt \notag\\
	&\,\leq \,2\,\big( -v_* \,+\chi C_1 \big)\,f(t)\; dt \notag\\
	&\qquad +2\,C_2\,\chi\,\beta\,\int_0^te^{-\alpha(t-s)}\big(f(t)+\sqrt{f(t)\,f(s)} \,\big) \,ds \; dt\notag\\
	&\qquad+\frac{4\,C_3\,\chi\,\beta}{\sqrt{N}}\,\sqrt{f(t)}\int_0^t e^{-\alpha(t-s)} \;ds \; dt, 
	\end{align}
	where $C_1 = d\|\Hess h_0\|_{\infty}$, $C_2 = d\|\Hess g\|_{\infty}$ and $C_3 = \|\nabla g\|_{\infty}$.
	Thus
	\begin{equation}\label{E:UniformPOC_PP5}
	df(t)\; \leq\; \left(-2\,\tilde \lambda\,f(t) + 2\,\tilde{C_2}\sqrt{f(t)}\int_0^te^{-\alpha(t-s)}\sqrt{f(s)}\,ds+\frac{2\,\tilde {C_3}}{\sqrt{N}}\,\sqrt{f(t)}\right) dt,
	\end{equation}
	where $-\tilde \lambda\doteq -v_*\,+\chi C_1 +C_2\chi\beta/\alpha$, $\,\tilde{C_2}\doteq C_2\,\chi\,\beta$ and $\tilde{C_3}\doteq 2\,C_3\,\chi\,\beta/\alpha$. 
	Under Assumption \ref{A:A} $\tilde \lambda \ge \tilde C_2/\alpha$. 
	Note that   $f(0)=0$ since $X^{i,N}_0=\bar{X}^i_0$. Multiplying both sides of \eqref{E:UniformPOC_PP5} by $e^{2\,\tilde\lambda\,t}$ and letting $\vartheta(t)\doteq e^{2\,\tilde\lambda\,t}\,f(t)$, we obtain
	\begin{equation}\label{E:ineqg}
	d\vartheta(t) \leq  2\sqrt{\vartheta(t)} \Big(\int_0^t\tilde{C_2}\,e^{(\tilde\lambda-\alpha)\,(t-s)}\sqrt{\vartheta(s)}\,ds+\frac{\tilde {C_3}e^{\tilde\lambda\,t}\,}{\sqrt{N}}\Big).
	\end{equation}
	In rest of the proof we estimate $\vartheta(t)$  using the above inequality.
	For this, heuristically, one can set $\zeta=\sqrt{\vartheta}$ to obtain a simplification
	\begin{equation*}
	\zeta'(t) \leq \tilde{C_2}\int_0^te^{(\tilde\lambda-\alpha)\,(t-s)}\zeta(s)\,ds+\frac{\tilde {C_3}e^{\tilde\lambda\,t}\,}{\sqrt{N}} \quad\text{on } \{ \zeta\neq 0\}.
	\end{equation*}
	Since we do not have any control for $\zeta'(t)$ on $\{\zeta=0\}$,
	we instead consider $\zeta_\epsilon(t)=\sqrt{\vartheta(t)+\epsilon^2}$ where $\epsilon>0$. Then $\vartheta'=2\zeta_\epsilon \zeta'_\epsilon$ and  $\sqrt{\vartheta}=\sqrt{\zeta_\epsilon^2-\epsilon^2}\leq \zeta_\epsilon$. Hence \eqref{E:ineqg} implies that
	$$d(\zeta_\epsilon (t))^2\leq 2\zeta_\epsilon(t)\Big(\int_0^t\tilde{C_2}\,e^{(\tilde\lambda-\alpha)\,(t-s)}\zeta_\epsilon(s)\,ds+ \frac{\tilde {C_3}e^{\tilde\lambda\,t}\,}{\sqrt{N}}\Big).$$
	Thus for a.e. $t\geq 0$,
	\begin{equation}\label{E:ineqh}
	\zeta'_\epsilon(t)\leq \int_0^t\tilde{C_2}\,e^{(\tilde\lambda-\alpha)\,(t-s)}\zeta_\epsilon(s)\,ds+ \frac{\tilde {C_3}e^{\tilde\lambda\,t}\,}{\sqrt{N}}.
	\end{equation}
	We will now use a comparison result for ordinary differential equations (ODE). Let $k_{\epsilon}$ be the solution of the ODE 
	\begin{equation}\label{E:ODEk}
	k_{\epsilon}''(t)= (\tilde \lambda-\alpha)\, k_{\epsilon}'(t) + \tilde{C_2}\,k_{\epsilon}(t) + \frac{\alpha\,\tilde C_3}{\sqrt{N}}\,e^{\tilde\lambda\,t},\; k_{\epsilon}(0)=\epsilon, \; k_{\epsilon}'(0)=\frac{\tilde {C_3}}{\sqrt{N}}.
	\end{equation}
	Note that the solution $k_{\epsilon}$ solves the integral equation
	\begin{equation}\label{E:eqt_k}
	k'_\epsilon(t)= \int_0^t\tilde{C_2}\,e^{(\tilde\lambda-\alpha)\,(t-s)}k_\epsilon(s)\,ds+ \frac{\tilde {C_3}e^{\tilde\lambda\,t}\,}{\sqrt{N}},\quad k_{\epsilon}(0)=\epsilon.
	\end{equation}
	%
	%
	It is straightforward to verify that the unique solution of \eqref{E:ODEk} converges to
	\begin{equation}\label{E:Sol_k}
	k(t) \doteq \frac{\tilde{C_3}}{(r_1-r_2)\sqrt{N}}\,\Big[e^{r_1t}-e^{r_2t}\,+\,\alpha\,\Big(
	\frac{e^{\tilde{\lambda}t}-e^{r_1t}}{\tilde{\lambda}-r_1}-\frac{e^{\tilde{\lambda}t}-e^{r_2t}}{\tilde{\lambda}-r_2}\Big)\Big]
	\end{equation}
	uniformly on compacts when $\epsilon \to 0$, where  $r_2<0<r_1$ are the zeros of the characteristic polynomial 
	\begin{equation}
	\theta(r) = r^2-(\tilde \lambda-\alpha)\,r - \tilde{C_2}.\label{eq:cheqn}
	\end{equation}
	In particular,
	\begin{equation}\label{E:bound_k}
	\sup_{t\leq T}\sup_{\epsilon\in(0,1)}|k_{\epsilon}(t)|<\infty\quad\text{for any }T\geq 0.
	\end{equation}
	Subtracting  \eqref{E:eqt_k} from \eqref{E:ineqh}, we see that $\phi_{\eps}=\zeta_{\epsilon}-k_{\epsilon}$ satisfies
	\begin{equation*}
	\phi_{\eps}(t) \leq \tilde{C_2} \int_0^t\int_0^{t_1}e^{(\tilde\lambda-\alpha)\,(t_1-t_2)}\phi_{\eps}(t_2)\,dt_2\,dt_1 ,\quad t\geq 0.
	\end{equation*}
	Upon iterating $M$ times, one has
	\begin{equation}\label{E:IterateM}
	\phi_{\eps}(t) \leq (\tilde{C_2})^M \int_{\{0\leq t_{2M}\leq t_{2M-1}\leq \cdots\leq t_1\leq t\}}\, \prod_{i=1}^M\,e^{(\tilde\lambda-\alpha)\,(t_{2i-1}-t_{2i})}\phi_{\eps}(t_{2M})\,dt_{2M}\cdots dt_2\,dt_1.
	\end{equation}
	For any $T>0$, $C_T=\sup_{t\leq T}\sup_{\epsilon\in(0,1)}|\phi_{\eps}(t)|<\infty$ by \eqref{E:bound_k} and Theorem \ref{T:POC_PP}. Hence the integrand of \eqref{E:IterateM} is at most
	$C_T\,e^{(\tilde\lambda-\alpha)\,T\,M}$.
	Thus \eqref{E:IterateM} implies that for every $T\in (0,\infty)$ there exists $\tilde C_T \in (0,\infty)$ such that for all $t\in [0,T]$,
	\begin{align*}
	\phi_{\eps}(t) 
	&\leq \tilde C_T^M\,\text{Volume}\,\{0\leq t_{2M}\leq t_{2M-1}\leq \cdots\leq t_1\leq t\}\\
	&\leq (\tilde C_T T^2)^M/(2M)!
	\end{align*}
	Letting $M\to\infty$, we see that $\sup_{t\in[0,T]}\phi_{\eps}(t) \leq 0$. Since $T>0$ is arbitrary, we have $\phi_{\eps}(t)\leq 0 $ for all $t\ge 0$. Thus $\zeta_{\epsilon}(t)\leq k_{\epsilon}(t)$ for all $t\ge 0$. Letting $\epsilon \to 0$, we obtain that $\sqrt{\vartheta}$ is bounded by $k$ defined by
	\eqref{E:Sol_k}. Recalling that $\vartheta(t)\doteq e^{2\,\tilde\lambda\,t}\,f(t)$, we have
	\begin{align}\label{E:sqrtf}
	\sqrt{f(t)}& \leq \frac{\tilde{C_3}}{(r_1-r_2)\sqrt{N}}\,\Big[e^{(r_1-\tilde\lambda)t}-e^{(r_2-\tilde\lambda)t}\,+\,\alpha\,\Big(
	\frac{1-e^{(r_1-\tilde\lambda)t}}{\tilde{\lambda}-r_1}-\frac{1-e^{(r_2-\tilde\lambda)t}}{\tilde{\lambda}-r_2}\Big)\Big]. 
	\end{align}
	Observe that Assumption \ref{A:A} implies that $\tilde C_2 < \tilde \lambda \alpha$ from which it follows that
	$$ \sqrt{(\tilde{\lambda}-\alpha)^2+4\tilde{C_2}} < \tilde \lambda + \alpha.$$ Recalling that $r_1, r_2$ are the positive and negative roots of \eqref{eq:cheqn}
	we now see that under Assumption \ref{A:A}, $\tilde{\lambda} >r_1>0>r_2$. Thus the right hand side of \eqref{E:sqrtf} is at most
	\begin{equation*}
	\frac{\tilde{C_3}}{(r_1-r_2)\sqrt{N}}\,\Big[1\,+\,
	\frac{\alpha}{\tilde{\lambda}-r_1}\Big] 
	\end{equation*}
	The proof is complete.	
\end{pf}

\vspace{0.2in}

We can now complete the proof of Corollary \ref{cor:UniformPOC_PE_2_1}.\\

\begin{pf}{\it of Corollary \ref{cor:UniformPOC_PE_2_1}. }	
	The first statement in the corollary is immediate from Theorem \ref{T:UniformPOC_PP} on noting that
	$$
	\mathcal{W}_2\Big(\mathcal{L}(X^{1,N}_t,\,X^{2,N}_t,\,\cdots,\,X^{k,N}_t),\;\mathcal{L}(\bar{X}^1_t)^{\otimes \,k}\Big)
	\le \left( \E \sum_{i=1}^k |X^{i,N}_t - \bar{X}_t^i|^2\right)^{1/2}.$$
	For the second statement note that since $\mu_0 \in \clp_q(\R^d)$, $\sup_{t\ge 0} \int_{\R^d} |x|^{\tilde q} \mu_t(dx) < \infty$ for some $\tilde q>2$(see  Remark \ref{Rk:Smoment}).
	Hence from Theorem 1.1 of \cite{FG15}, we have 
	\begin{equation*}
	\limsup_{N\to\infty}\,\sup_{t\geq 0} \E[\mathcal{W}^2_2(\nu_t^{N}, \mu_{t})]=0.
	\end{equation*}
	Also, from Theorem \ref{T:UniformPOC_PP}, as $N\to \infty$
	$$\sup_{t\geq 0} \E[\mathcal{W}^2_2(\mu_t^{N}, \nu^N_{t})] \le \sup_{t\geq 0}\E \frac{1}{N}\sum_{i=1}^N |X^{i,N}_t - \bar{X}_t^i|^2 \to 0.$$
	The result now follows on combining the above two displays and using the triangle inequality
	$$\mathcal{W}_2(\mu_t^{N}, \mu_{t}) \le \mathcal{W}_2(\mu_t^{N}, \nu^N_{t}) + \mathcal{W}_2(\nu_t^{N}, \mu_{t}).$$
\end{pf}


\subsection{Concentration bounds}
\label{sec:sec-concbd}
In this section we will first provide exponential concentration bounds that are uniform over compact time intervals. Under the stronger property in Assumption \ref{A:A} we will then show that  these bounds can be strengthened to
be uniform over the infinite time horizon. We begin with an upper bound for $\mathcal{W}_1(\mu_t^N,\mu_t)$ in terms of $\big(\mathcal{W}_1(\nu_s^N,\mu_s)\big)_{s\in[0,t]}$ where
$\nu^N$ is as introduced in \eqref{def:munuN}.

\subsubsection{Bounds in terms of empirical measures of independent variables}\label{s:4.3.1}

The following proposition  is a generalization of Proposition 5.1 in \cite{BGV05}.
Let $\tilde \lambda \doteq v_* -C_1\chi -C_2\chi\beta/\alpha$ be as above \eqref{E:ineqg}.
\begin{proposition}\label{prop:W1PP}
	For all $t\geq 0$  
	\begin{equation*}
	\mathcal{W}_1(\mu^N_t,\,\mu_t)\leq \mathcal{W}_1(\nu^N_t,\,\mu_t)+  \frac{\sqrt{C_2\,\chi\,\beta}}{2}\int_0^t\Big(
	e^{(\rr_1-\tilde{\lambda})(t-s)} - e^{(\rr_2-\tilde{\lambda})(t-s)}\Big)\mathcal{W}_1(\nu^N_s,\,\mu_s)\,ds,
	\end{equation*}
	where, as before,   $\rr_2<0<\rr_1$ are the solutions of $r^2-(\tilde \lambda-\alpha)\,r- C_2\,\chi\,\beta=0$.
\end{proposition}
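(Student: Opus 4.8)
The plan is to follow the structure of the proof of Theorem~\ref{T:UniformPOC_PP}, but to control the ``fluctuation'' term $\int \nabla P_{t-s}g(y-\bar X^i_t)\,(\nu^N_s-\mu_s)(dy)$ via the Kantorovich--Rubinstein duality \eqref{E:KR_Duality} rather than via an $L^2$ bound, and to work with the averaged \emph{pathwise} deviation instead of its second moment. First, by the triangle inequality and \eqref{E:KR_Duality} applied to the coupling $(X^{i,N},\bar X^i)$,
\begin{equation*}
\mathcal{W}_1(\mu^N_t,\mu_t)\le \mathcal{W}_1(\mu^N_t,\nu^N_t)+\mathcal{W}_1(\nu^N_t,\mu_t)\le \frac1N\sum_{i=1}^N|\Delta^i_t|+\mathcal{W}_1(\nu^N_t,\mu_t),
\end{equation*}
so it suffices to estimate $\frac1N\sum_i|\Delta^i_t|$. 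Revisiting \eqref{E:UniformPOC_PP1}, the first two terms are treated exactly as before via \eqref{E:UniformPOC_PP2}. For the third term, since $\nabla Q_{t-s}\big(\int g(y-\cdot)\,m(dy)\big)(x)=e^{-\alpha(t-s)}\int \nabla P_{t-s}g(y-x)\,m(dy)$ and $\|\Hess P_{t-s}g\|_\infty\le\|\Hess g\|_\infty$ (the heat semigroup commutes with derivatives and is an $L^\infty$ contraction), the map $y\mapsto \nabla P_{t-s}g(y-\bar X^i_t)$ is Lipschitz with constant at most $C_2=d\|\Hess g\|_\infty$, so \eqref{E:KR_Duality} yields $\big|\int \nabla P_{t-s}g(y-\bar X^i_t)\,(\nu^N_s-\mu_s)(dy)\big|\le C_2\,\mathcal{W}_1(\nu^N_s,\mu_s)$.

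To cope with the non-differentiability of $t\mapsto|\Delta^i_t|$ at its zeros I will use the regularization $\zeta^i_\eps(t)\doteq\sqrt{|\Delta^i_t|^2+\eps^2}$ of the proof of Theorem~\ref{T:UniformPOC_PP} (note $t\mapsto\Delta^i_t$ is $C^1$ since the Brownian parts cancel). Dividing the inequality for $\frac{d}{dt}|\Delta^i_t|^2$ by $2\zeta^i_\eps$, using $e^{-\alpha t}\le1$, $|v_*|\le L_{\nabla V}$, $|\Delta^i_t|\le\zeta^i_\eps$ and $\zeta^i_\eps-\eps\le|\Delta^i_t|^2/\zeta^i_\eps\le\zeta^i_\eps$, then averaging over $i$ and absorbing $C_2\chi\beta\int_0^te^{-\alpha(t-s)}\bar\zeta_\eps(t)\,ds\le (C_2\chi\beta/\alpha)\bar\zeta_\eps(t)$ into the drift, one obtains, with $\bar\zeta_\eps\doteq\frac1N\sum_i\zeta^i_\eps$, $C_1=d\|\Hess h_0\|_\infty$, $\tilde\lambda=v_*-C_1\chi-C_2\chi\beta/\alpha$ and $\kappa=L_{\nabla V}+C_1\chi$,
\begin{equation*}
\frac{d}{dt}\bar\zeta_\eps(t)\le -\tilde\lambda\,\bar\zeta_\eps(t)+C_2\chi\beta\int_0^te^{-\alpha(t-s)}\big(\bar\zeta_\eps(s)+\mathcal{W}_1(\nu^N_s,\mu_s)\big)\,ds+\kappa\eps ,
\end{equation*}
with $\bar\zeta_\eps(0)=\eps$ (as $\Delta^i_0=0$). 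Multiplying by $e^{\tilde\lambda t}$ and writing $w_\eps(t)=e^{\tilde\lambda t}\bar\zeta_\eps(t)$, $\tilde W(s)=e^{\tilde\lambda s}\mathcal{W}_1(\nu^N_s,\mu_s)$,
\begin{equation*}
w'_\eps(t)\le C_2\chi\beta\int_0^t e^{(\tilde\lambda-\alpha)(t-s)}\big(w_\eps(s)+\tilde W(s)\big)\,ds+\kappa e^{\tilde\lambda t}\eps,\qquad w_\eps(0)=\eps .
\end{equation*}

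I will then compare $w_\eps$ with the solution $k_\eps$ of the linear Volterra equation obtained by replacing ``$\le$'' with ``$=$'' and $w_\eps$ with $k_\eps$ on the right (existence and uniqueness of $k_\eps$ are standard). As in Theorem~\ref{T:UniformPOC_PP}, $\phi_\eps\doteq w_\eps-k_\eps$ satisfies $\phi_\eps(0)=0$ and $\phi_\eps(t)\le C_2\chi\beta\int_0^t\!\int_0^{t_1}e^{(\tilde\lambda-\alpha)(t_1-t_2)}\phi_\eps(t_2)\,dt_2\,dt_1$; since $\phi_\eps$ is, along each sample path, bounded on any $[0,T]$ (because $\frac1N\sum_i|\Delta^i_\cdot|$ and $\tilde W$ are continuous, hence bounded, on $[0,T]$), iterating $M$ times and using $\mathrm{Vol}\{0\le t_{2M}\le\cdots\le t_1\le t\}=t^{2M}/(2M)!\to0$ forces $\phi_\eps\le0$, i.e.\ $w_\eps\le k_\eps$. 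Letting $\eps\downarrow0$, $k_\eps$ converges uniformly on compacts to the solution $k$ of $k''=(\tilde\lambda-\alpha)k'+C_2\chi\beta\,k+C_2\chi\beta\,\tilde W$ with $k(0)=k'(0)=0$, whose characteristic polynomial is exactly $r^2-(\tilde\lambda-\alpha)r-C_2\chi\beta$ with roots $\rr_2<0<\rr_1$ (the product of the roots is $-C_2\chi\beta<0$, so this sign pattern holds with \emph{no} use of Assumption~\ref{A:A}). The variation-of-constants formula for this second-order equation gives $k(t)=\frac{C_2\chi\beta}{\rr_1-\rr_2}\int_0^t\big(e^{\rr_1(t-s)}-e^{\rr_2(t-s)}\big)e^{\tilde\lambda s}\mathcal{W}_1(\nu^N_s,\mu_s)\,ds$, whence
\begin{equation*}
\frac1N\sum_i|\Delta^i_t|=\lim_{\eps\to0}e^{-\tilde\lambda t}w_\eps(t)\le e^{-\tilde\lambda t}k(t)=\frac{C_2\chi\beta}{\rr_1-\rr_2}\int_0^t\big(e^{(\rr_1-\tilde\lambda)(t-s)}-e^{(\rr_2-\tilde\lambda)(t-s)}\big)\mathcal{W}_1(\nu^N_s,\mu_s)\,ds .
\end{equation*}
Combining with the first display and using $\rr_1-\rr_2=\sqrt{(\tilde\lambda-\alpha)^2+4C_2\chi\beta}\ge 2\sqrt{C_2\chi\beta}$, so that $\frac{C_2\chi\beta}{\rr_1-\rr_2}\le\frac{\sqrt{C_2\chi\beta}}{2}$, yields the claimed bound.

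The main obstacle is the interplay of two features: the lack of smoothness of $|\Delta^i_t|$ at zero, handled by the $\zeta^i_\eps$ regularization together with careful sign bookkeeping when dividing by $2\zeta^i_\eps$; and the history dependence in \eqref{E:UniformPOC_PP1}, which — as already for Theorem~\ref{T:UniformPOC_PP} — precludes a direct Gronwall estimate and forces the comparison with a \emph{second-order} linear ODE and the Picard-type iteration used to deduce $w_\eps\le k_\eps$. Everything else is bookkeeping of the extra forcing term $\mathcal{W}_1(\nu^N_s,\mu_s)$ through these steps.
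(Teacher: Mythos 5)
Your proof is correct and follows essentially the same route as the paper: the same decomposition \eqref{E:UniformPOC_PP1}--\eqref{E:UniformPOC_PP2}, the same Kantorovich--Rubinstein bound on the fluctuation term, and the same $\eps$-regularization plus comparison with the second-order linear ODE with characteristic polynomial $r^2-(\tilde\lambda-\alpha)r-C_2\chi\beta$, ending with $\rr_1-\rr_2\ge 2\sqrt{C_2\chi\beta}$. The only (harmless) variation is that you track the pathwise average $\frac1N\sum_i|\Delta^i_t|$ with a per-particle regularization $\zeta^i_\eps$ and bound $\mathcal{W}_1(\mu^N_t,\nu^N_t)$ by it directly, whereas the paper works with $F(t)=\frac1N\sum_i|\Delta^i_t|^2$, regularizes $G(t)=e^{2\tilde\lambda t}F(t)$ once, and uses $\mathcal{W}_1\le\mathcal{W}_2\le\sqrt{F(t)}$; both reduce to the same comparison and the same constant.
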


\begin{proof}
	From \eqref{E:UniformPOC_PP1} and \eqref{E:UniformPOC_PP2} we have
	\begin{align}
	\,d|\Delta^i_t|^2
	& 	\leq\,\,2\,\Big[-v_* \;+\;e^{-\alpha\,t}\,\chi\,d\|\Hess h_0\|_{\infty}\,\Big]\,|\Delta^i_t|^2\,dt  \notag\\
	&\quad \, +2\,\chi\,\beta\int_0^t\left(\frac{e^{-\alpha(t-s)}\, d\|\Hess g\|_{\infty}}{N} \sum_j |\Delta^i_t|(|\Delta^i_t|+|\Delta^j_s|)\right)\,ds \;dt \notag\\
	&\quad \,+2\,\chi\,\beta\int_0^t \left(e^{-\alpha(t-s)} \Delta^i_t\cdot \Big(  \int  \nabla P_{t-s}g (y-\bar{X}_t^i)\,(\nu^N_s-\mu_s)(dy)  \Big)\right)\,ds\;dt. \label{E:Ineq_PP}
	\end{align}
	Instead of taking expectations as in Section \ref{sec:poc}, we now bound the third term on the right hand side of \eqref{E:Ineq_PP} using the inequality
	\begin{equation}\label{E:dualityPP}
	\Big|\int  \nabla P_{t-s}g (\bar{X}_t^i-y)\,(\nu^N_s-\mu_s)(dy)\Big| \leq d\|\Hess g\|_{\infty}\,\mathcal{W}_1(\nu^N_s,\,\mu_s)
	\end{equation}
	which follows from  the Kantorovich-Rubenstein duality \eqref{E:KR_Duality} and the fact that the Lipschitz norm of the function $x\mapsto \nabla_j P_{t-s}g(\bar{X}_t^i-x)$ is bounded by $d\|\Hess g\|_{\infty}$ for each $j = 1, \cdots , d$.
	
	Applying \eqref{E:dualityPP} to  \eqref{E:Ineq_PP}, then summing over $i$ and using the inequality $\sum_i|\Delta^i_t|\leq \sqrt{N\sum_i|\Delta^i_t|^2}$, we see that with 
	$F(t)\doteq \sum_{i=1}^N|\Delta^i_t|^2/N$
	%
	%
	\begin{align*}
	dF(t)\leq & \,2\,\Big[-v_*  \;+\;\,\chi\,d\|\Hess h_0\|_{\infty}\,\Big]\,F(t) \; dt \notag\\
	&\, +2\,\chi\,\beta\int_0^t\left(e^{-\alpha(t-s)}\, d \|\Hess g\|_{\infty}\,(F(t) + \sqrt{F(t)\,F(s)})\right)\,ds \; dt \notag\\
	&\,+2\,\chi\,\beta\int_0^t \left(e^{-\alpha(t-s)} \sqrt{F(t)}\,d \|\Hess g\|_{\infty}\,\mathcal{W}_1(\nu^N_s,\,\mu_s)\right)\,ds \; dt \notag\\
	\leq &\,2\,\Big[-v_*  \;+\,C_1\,\chi\,\Big]\,F(t)  \; dt \notag\\
	&\, +2\,\chi\,\beta\,C_2\int_0^te^{-\alpha(t-s)}\,(F(t) + \sqrt{F(t)\,F(s)})\,ds \;dt \notag\\
	&\,+2\,\chi\,\beta\,C_2\, \sqrt{F(t)}\int_0^t e^{-\alpha(t-s)}\,\mathcal{W}_1(\nu^N_s,\,\mu_s)\,ds \; dt \notag\\
	= &\,2\,\Big[-v_* \;+\,C_1\,\chi\,+\,\frac{\chi\,\beta\,C_2}{\alpha}\,\Big]\,F(t)  \; dt \notag\\
	&\, +2\,\chi\,\beta\,C_2\,\sqrt{F(t)}\int_0^t e^{-\alpha(t-s)} \left(\sqrt{F_s} +\mathcal{W}_1(\nu^N_s,\,\mu_s) \right)\,ds \;dt 
	\end{align*}
	Recalling that $\tilde \lambda  = v_* -C_1\chi -C_2\chi\beta/\alpha$,  we obtain  
	\begin{equation}\label{E:IneqF}
	dF(t)\; \leq\; \left(-2\,\tilde \lambda\,F(t) + 2\,\tilde{C_2}\sqrt{F(t)}\int_0^te^{-\alpha(t-s)}
	\left(\sqrt{F(s)}+ \mathcal{W}(s)\right)\,ds\right) dt,
	\end{equation}
	where $\mathcal{W}(s)\doteq\mathcal{W}_1(\nu^N_s,\,\mu_s)$ and  $\tilde{C_2}$ is as introduced below \eqref{E:UniformPOC_PP5}.
	Recall that $F_0=0$. 
	We now use \eqref{E:IneqF} to obtain an upper bound for $F(t)$ in terms of  $\{\mathcal{W}(s) \}_{s\in[0,t]}$. 
	
	As in \eqref{E:ineqg}, $G(t)\doteq e^{2\,\tilde \lambda\,t}\,F(t)$ satisfies
	\begin{equation*}
	G'(t) \leq 2\,\tilde{C_2} \sqrt{G(t)}
	\Big(\int_0^{t}e^{(\tilde \lambda-\alpha)\,(t-s)}\sqrt{G(s)}\,ds \,+ \,e^{(\tilde \lambda-\alpha)\,t}\int_0^{t}e^{\alpha s}\mathcal{W}(s)\,ds \Big).
	\end{equation*}	
	Following the same comparison argument as was used to obtain the bound for $\sqrt{\vartheta}$ ($\vartheta$ was introduced above \eqref{E:ineqg}), we let $H_\epsilon(t)=\sqrt{G(t)+\epsilon^2}$ where $\epsilon>0$ and obtain
	for a.e. $t\ge 0$
	\begin{equation}\label{E:Ineq_H_allt}
	H_\epsilon'(t) \leq \tilde{C_2} 
	\Big(\int_0^{t}e^{(\tilde \lambda-\alpha)\,(t-s)}H_\epsilon(s)\,ds \,+ \,e^{(\tilde \lambda-\alpha)\,t}\int_0^{t}e^{\alpha s}\mathcal{W}(s)\,ds \Big).
	\end{equation} 
	This time we need to solve  the inhomogeneous second order ODE
	\begin{equation*}
	K_{\epsilon}''(t)- (\tilde \lambda-\alpha)\, K_{\epsilon}'(t) -\tilde{C_2}\,K_{\epsilon}(t) = \tilde{C_2}\,e^{\tilde \lambda\,t}\mathcal{W}(t)
	\end{equation*}
	with initial conditions $K_{\epsilon}(0)=\epsilon$ and $K_{\epsilon}'(0)=0$. On solving this ODE, we obtain, as in  \eqref{E:Sol_k}
	and \eqref{E:sqrtf}, that $K_{\eps}$ converges uniformly on compacts as $\eps \to 0$ to $K$ defined as
	\begin{equation}\label{E:Sol_K}
	K(t) \doteq \frac{\tilde{C_2}}{(\rr_1-\rr_2)}
	\Big(
	e^{\rr_1\,t}\int_0^t
	e^{(\tilde \lambda-\rr_1)s}\mathcal{W}(s)\,ds - e^{\rr_2\,t}\int_0^te^{(\tilde \lambda-\rr_2)s}\mathcal{W}(s)\,ds\Big)
	\end{equation}
	Similar to the argument below \eqref{E:bound_k} we have that for all $\eps>0$ and $t\ge 0$
	$$\sqrt{G(t)+\eps^2} \le H_{\eps}(t) \le K_{\eps}(t).$$
	Sending $\eps \to 0$,
	\begin{align}
	\sqrt{F(t)} \le e^{-\tilde \lambda t} \sqrt{G(t)} \le e^{-\tilde \lambda t} K(t)= \frac{\tilde{C_2}}{(\rr_1-\rr_2)}\int_0^t\left(
	e^{(\rr_1-\tilde \lambda)(t-s)}\mathcal{W}(s) - e^{(\rr_2-\tilde \lambda)(t-s)}\mathcal{W}(s)\right)\,ds. \label{E:Sol_F}
	\end{align}
	
	On the other hand, 
	\begin{equation}\label{W1W2F}
	\mathcal{W}_1(\mu^N_t,\,\nu^N_t)\leq \mathcal{W}_2(\mu^N_t,\,\nu^N_t) \leq \Big(\sum_{i=1}^N|\Delta^i_t|^2/N\Big)^{1/2}=\sqrt{F(t)}.
	\end{equation}
	The desired equality now follows from the triangle inequality $\mathcal{W}_1(\mu^N_t,\,\mu_t)\leq \mathcal{W}_1(\nu^N_t,\,\mu_t)+ \mathcal{W}_1(\mu^N_t,\,\nu^N_t)$ and the observation
	that $\rr_1-\rr_2 \ge 2 \sqrt{\tilde C_2}$.
\end{proof}
The following corollary is an immediate consequence of the above proposition.
\begin{corollary}
	\label{cor:cor619}
	For every $T \in (0,\infty)$, there exists $C_T \in (0,\infty)$ such that
	\begin{equation*}
	\mathcal{W}_1(\mu^N_t,\,\mu_t)\leq \mathcal{W}_1(\nu^N_t,\,\mu_t)+  C_T \int_0^t \mathcal{W}_1(\nu^N_s,\,\mu_s)\,ds \quad\text{for }t\in[0,T].
	\end{equation*}
	Also, for all $t\ge 0$
	\begin{equation*}
	\mathcal{W}_1(\mu^N_t,\,\mu_t)\leq \mathcal{W}_1(\nu^N_t,\,\mu_t)+  \frac{\sqrt{C_2\,\chi\,\beta}}{2}\int_0^t
	e^{(r_1-\tilde{\lambda})(t-s)} \mathcal{W}_1(\nu^N_s,\,\mu_s)\,ds.
	\end{equation*}
\end{corollary}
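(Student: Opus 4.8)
\textbf{Proof proposal.} The plan is to obtain both inequalities directly from Proposition \ref{prop:W1PP}, which already furnishes the exact integral-kernel bound for the deviation $\mathcal{W}_1(\mu^N_t,\mu_t)$ in terms of $\big(\mathcal{W}_1(\nu^N_s,\mu_s)\big)_{s\in[0,t]}$. No new probabilistic input is needed; the two statements are simply two different ways of weakening that bound.

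For the first inequality, fix $T\in(0,\infty)$. For $0\le s\le t\le T$ one has $t-s\in[0,T]$, and the map
$u\mapsto \frac{\sqrt{C_2\,\chi\,\beta}}{2}\big(e^{(r_1-\tilde\lambda)u}-e^{(r_2-\tilde\lambda)u}\big)$
is continuous on the compact interval $[0,T]$, hence attains a finite maximum there, which I call $C_T$. Substituting this bound for the kernel into the estimate of Proposition \ref{prop:W1PP} and using $\mathcal{W}_1(\nu^N_s,\mu_s)\ge 0$ yields
$\mathcal{W}_1(\mu^N_t,\mu_t)\le \mathcal{W}_1(\nu^N_t,\mu_t)+C_T\int_0^t \mathcal{W}_1(\nu^N_s,\mu_s)\,ds$ for $t\in[0,T]$. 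Here $C_T$ depends only on $T$, on $\tilde\lambda$, $r_1$, $r_2$, and on the model parameters, but not on $N$ or on $t$, so the bound is genuinely uniform in $N$ and in $t\in[0,T]$.

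For the second inequality, I would observe that $e^{(r_2-\tilde\lambda)(t-s)}>0$ for every $0\le s\le t$, so that
$$e^{(r_1-\tilde\lambda)(t-s)}-e^{(r_2-\tilde\lambda)(t-s)}\le e^{(r_1-\tilde\lambda)(t-s)}.$$
Inserting this into the bound of Proposition \ref{prop:W1PP} and again using nonnegativity of $\mathcal{W}_1(\nu^N_s,\mu_s)$ gives the claimed inequality, now valid for all $t\ge 0$.

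There is no real obstacle in either step; the entire analytic content resides in Proposition \ref{prop:W1PP}. The only point deserving a moment's attention is the uniformity of the constant $C_T$ in the first inequality, which is immediate because the kernel being maximized depends on the fixed problem data and is evaluated at arguments $u=t-s\le T$.
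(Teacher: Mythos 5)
Your proposal is correct and follows exactly the route the paper intends: the paper states the corollary as an immediate consequence of Proposition \ref{prop:W1PP}, and your two reductions (bounding the nonnegative kernel by its maximum on $[0,T]$, and dropping the positive term $e^{(r_2-\tilde{\lambda})(t-s)}$) are precisely the intended elementary steps.
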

Recall from Section \ref{sec:poc}  that under Assumption \ref{A:A} $(r_1-\tilde{\lambda})<0$. This will be key in obtaining a uniform in time bound from the last inequality in the corollary above.

\subsubsection{Moment bounds}\label{s:4.3.2}

Let $(\bar X_t)_{t\geq 0}$ be the nonlinear process solving \eqref{E:NonlinearProcessPP} and $\mu_t$ be its law at $t$. 
In Proposition  \ref{prop:Smoment} below we will give bounds on the square exponential moments of $\bar X_t$ under appropriate conditions.  The first part of the proposition holds under our standing assumptions along with a suitable integrability condition. For the second part we will make in addition an assumption that is weaker than Assumption \ref{A:A}, namely $v_*>0$, where $v_*$ was defined in \eqref{eq:eq951}.
%
Let, for $t, \theta \ge 0$,
$$S_{\theta}(t) \doteq \int_{\R^d} e^{\theta\,|x|^2}\,d\mu_t(x) = \E\big[e^{\theta\,|\bar X_t|^2}\big].$$
\begin{proposition}\label{prop:Smoment}
	Suppose $\mu_0$ is as in Theorem \ref{T:Concen_PP_longtime}, namely for some $\theta_0>0$, $S_{\theta_0}(0) <\infty$. Then
	\begin{enumerate}
		\item[(i)] For any $T\in (0,\infty)$, there exists $\theta_T>0$ such that
		$\sup_{t\in[0,T]}S_{\theta_T}(t) <\infty$.
		\item[(ii)] Suppose  that $v_*>0$. Then for any $\theta\in (0,\,\theta_0/4\,\wedge\,v_*/8)$, we have $\sup_{t\geq 0}S_{\theta}(t) < \infty$.
	\end{enumerate}
\end{proposition}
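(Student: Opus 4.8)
The plan is to apply It\^o's formula to a (possibly time-dependent) Gaussian-type functional of $\bar X_t$ and reduce everything to a scalar differential inequality for $S_\theta(t)$. Throughout I would use three facts about the coefficients of the SDE $d\bar X_t = dB_t + b(t,\bar X_t)\,dt$, $b(t,x)\doteq -\nabla V(x)+\chi\nabla h(t,x)$: first, by Lemma \ref{L:Properties_PPN} applied with $m=\mu$ there is $M\in(0,\infty)$ with $\sup_{t\ge0,\,x}|\nabla h(t,x)|\le M$; second, since $\nabla V$ is Lipschitz with $\nabla V(0)=0$, one has $\langle x,\nabla V(x)\rangle\ge v_*|x|^2$ and $|\langle x,\nabla V(x)\rangle|\le L_{\nabla V}|x|^2$; hence $\langle x,b(t,x)\rangle\le -v_*|x|^2+\chi M|x|$ and also $\langle x,b(t,x)\rangle\le L_{\nabla V}|x|^2+\chi M|x|$. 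Writing $\mathcal{A}_t f \doteq \tfrac12\Delta f+\langle b(t,\cdot),\nabla f\rangle$ for the generator of $\bar X$, a direct computation gives $\mathcal{A}_t(e^{\theta|x|^2}) = [\theta d + 2\theta^2|x|^2 + 2\theta\langle b(t,x),x\rangle]\,e^{\theta|x|^2}$.

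For (i) I would use a decreasing time-dependent exponent. Fix $T$, set $\theta(t)\doteq\theta_0 e^{-\rho t}$ with $\rho\doteq 2\theta_0+2L_{\nabla V}+1$, and $\Psi(t,x)\doteq e^{\theta(t)|x|^2}$. Using $\langle x,b(t,x)\rangle\le L_{\nabla V}|x|^2+\chi M|x|$ and the elementary bound $2\theta(t)\chi M|x|\le\theta(t)|x|^2+\theta(t)\chi^2M^2$, the coefficient of $|x|^2$ in $(\partial_t+\mathcal{A}_t)\Psi/\Psi$ equals $\theta(t)[-\rho+2\theta(t)+2L_{\nabla V}+1]$, which is $\le 0$ by the choice of $\rho$ and $\theta(t)\le\theta_0$, so $(\partial_t+\mathcal{A}_t)\Psi\le\kappa\,\Psi$ with $\kappa\doteq\theta_0(d+\chi^2M^2)$. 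Consequently $e^{-\kappa t}\Psi(t,\bar X_t)$ is a nonnegative local supermartingale, hence a supermartingale, and $S_{\theta(t)}(t)=\E[\Psi(t,\bar X_t)]\le e^{\kappa t}\,S_{\theta_0}(0)$. Since $\theta(\cdot)$ is decreasing and $S_\theta$ is nondecreasing in $\theta$, this gives $\sup_{t\in[0,T]}S_{\theta_T}(t)\le e^{\kappa T}S_{\theta_0}(0)<\infty$ with $\theta_T\doteq\theta_0 e^{-\rho T}$.

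For (ii), where now $v_*>0$, I would instead fix $\theta$ small and establish a Lyapunov drift condition. From the formula for $\mathcal{A}_t(e^{\theta|x|^2})$, the bound $\langle x,b(t,x)\rangle\le -v_*|x|^2+\chi M|x|$, and $2\theta\chi M|x|\le\tfrac{\theta v_*}{2}|x|^2+\tfrac{2\theta\chi^2M^2}{v_*}$, one gets $\mathcal{A}_t(e^{\theta|x|^2})\le[(2\theta^2-\tfrac{3\theta v_*}{2})|x|^2+K_1]\,e^{\theta|x|^2}$ with $K_1$ finite; if $\theta\le v_*/2$ then $2\theta^2-\tfrac{3\theta v_*}{2}\le-\tfrac{\theta v_*}{2}$, and since $r\mapsto(-\tfrac{\theta v_*}{2}r+K_1+\tfrac{\theta v_*}{4})e^{\theta r}$ is bounded on $[0,\infty)$ we obtain $\mathcal{A}_t(e^{\theta|x|^2})\le -c_1 e^{\theta|x|^2}+c_2$ for $c_1=\theta v_*/4$ and some finite $c_2$. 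Applying It\^o to $e^{c_1 t}e^{\theta|\bar X_t|^2}$, stopping at $\tau_R\doteq\inf\{t:|\bar X_t|>R\}$, taking expectations (the stopped stochastic integral is a bounded martingale), and letting $R\to\infty$ with Fatou's lemma, yields $S_\theta(t)\le e^{-c_1 t}S_\theta(0)+\tfrac{c_2}{c_1}(1-e^{-c_1 t})\le S_\theta(0)\vee\tfrac{c_2}{c_1}$ uniformly in $t$, provided $S_\theta(0)<\infty$, i.e. $\theta\le\theta_0$. The smaller range $\theta<\theta_0/4\wedge v_*/8$ in the statement simply leaves room to run this same argument at level $4\theta$, so that $\sup_t S_{4\theta}(t)<\infty$; since $|x|^2 e^{2\theta|x|^2}\le(2\theta)^{-1}e^{4\theta|x|^2}$, this controls the expected quadratic variation $\E\int_0^t|\bar X_s|^2 e^{2\theta|\bar X_s|^2}\,ds\le C\int_0^t S_{4\theta}(s)\,ds$ of the It\^o martingale and removes any remaining concern about the localization.

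The step I expect to be the main obstacle is making the stochastic-calculus manipulations rigorous, that is, passing from the local (super)martingale statements to genuine expectation inequalities without knowing a priori that $S_\theta(t)$ is finite; this is exactly why $\theta$ must be taken so small in part (ii). The other delicate point is the completion-of-squares bookkeeping that turns $\mathcal{A}_t(e^{\theta|x|^2})$ into a strictly dissipative Lyapunov drift $-c_1 e^{\theta|x|^2}+c_2$: this is where the hypothesis $v_*>0$ is essential, and it is what makes (ii), unlike (i), hold uniformly over $[0,\infty)$ rather than only on compact time intervals.
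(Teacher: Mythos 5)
Your proof is correct, but it handles both parts somewhat differently from the paper, and the differences are worth recording. For part (i) the paper does not use It\^o's formula at all: it bounds the path directly, $\sup_{t\le T}|\bar X_t|\le \big(|\bar X_0|+\chi C T+\sup_{t\le T}|B_t|\big)e^{L_{\nabla V}T}$ by Gronwall (with $C$ from Lemma \ref{L:Properties_PPN}), and then uses Cauchy--Schwarz together with the square-exponential integrability of $|\bar X_0|$ and of $\sup_{t\le T}|B_t|$; this yields the stronger statement $\E\big[\sup_{t\le T}e^{\theta_T|\bar X_t|^2}\big]<\infty$, a form of the estimate that the paper reuses later (e.g.\ in the proof of Lemma \ref{L:exp_st}, cf.\ \eqref{eq:eq627}). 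Your time-decaying exponent $\theta(t)=\theta_0e^{-\rho t}$ plus the fact that a nonnegative local supermartingale with integrable initial value is a supermartingale gives only the fixed-time bound, but that is all the proposition asserts, and it comes with an explicit $\theta_T=\theta_0e^{-\rho T}$ and no Cauchy--Schwarz loss. For part (ii) the core computation is the same as the paper's (It\^o on $e^{\theta|x|^2}$, the dissipativity $-x\cdot\nabla V(x)\le -v_*|x|^2$, Young's inequality to absorb the bounded drift $\chi\nabla h$, and a Lyapunov-type bound leading to $S_\theta(t)\le e^{-c_1t}S_\theta(0)+\tfrac{c_2}{c_1}$); the genuine difference is how the integrability issue is treated. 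The paper first proves that the stochastic integral $M_\theta$ is a square-integrable martingale, via a separate exponential-supermartingale estimate (see \eqref{E:Bound2}), and this verification is precisely what forces the restriction $\theta<\theta_0/4\wedge v_*/8$ in the statement. You instead stop at $\tau_R$, where the stopped stochastic integral is trivially a martingale, and pass to the limit by Fatou; this avoids the martingale verification entirely and in fact works for the larger range $\theta\le\theta_0\wedge v_*/2$, so it certainly covers the stated range. Your closing remark about rerunning the argument at level $4\theta$ to control the quadratic variation is therefore unnecessary (though harmless): with the localization-plus-Fatou scheme no a priori control of the It\^o integral's second moment is needed. The only point to make explicit in (i) is the standard fact you are invoking (nonnegative local supermartingale with $\E[e^{\theta_0|\bar X_0|^2}]<\infty$ is a supermartingale), which holds here by hypothesis, so there is no gap.
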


\bigskip 
\begin{pf} 
	(i) 
	Note that for all $t \in [0,T]$
	$$|\bar X_t|\leq |\bar X_0|+ \chi C T + \sup_{0\le t \le T }|B_t|+ L_{\nabla V}\int_0^t\,|\bar X_s|\,ds, \quad t\geq 0,$$
	where $C$ is as in Lemma \ref{L:Properties_PPN}.
	By Gronwall's lemma 
	$$\sup_{0\le t \le T }|\bar X_t| \leq a(T)e^{L_{\nabla V}T}$$ 
	where 
	$$a(T)=|\bar X_0|+ \chi C T + \sup_{0\le t \le T }|B_t|.$$
	Thus for $\theta>0$
	\begin{align*}
	\E\Big[ \sup_{0\le t \le T } e^{\theta |\bar X_t|^2} \Big]
	&\le \kappa_{1,T} \E\Big[ e^{2\theta |\bar X_0|^2}  e^{2\theta (\chi C T + \sup_{0\le t \le T }|B_t|)^2}\Big]\\
	&\le \kappa_{1,T} \left(\E\left[ e^{4\theta |\bar X_0|^2}\right]\right)^{1/2}\left(\E\left[ e^{4\theta (\chi C T + \sup_{0\le t \le T }|B_t|)^2}\right]\right)^{1/2}.
	\end{align*}
	Choose $\theta \le \frac{\theta_0}{4}$ such that
	$\E\left[ e^{8\theta  |B_T|^2)}\right] < \infty.$
	Then for all such $\theta$, $\sup_{0\le t \le T } S_{\theta}(t) < \infty$.
	
	%
	(ii) Fix $\theta >0$. We apply It\^o's formula to $\phi(x)=e^{\theta\,|x|^2}$. Note that $\nabla\phi(x)=2\theta e^{\theta\,|x|^2}x$ and $\Delta \phi(x)=2\theta e^{\theta\,|x|^2}\big(d+2\theta |x|^2\big)$. Thus for $t\ge 0$
	\begin{align}\label{E:Ito_S}
	d\phi(\bar X_t) =&\,  e^{\theta\,|\bar X_t|^2}\Big(2\theta \bar X_t\cdot (dB_t -\nabla V(\bar X_t)\,dt +\chi\nabla h(t,\bar X_t)\,dt) +\theta\big(d+2\theta |\bar X_t|^2\big)\,dt \Big).
	\end{align}
	We claim that with $\sigma \doteq \theta_0/4\,\wedge\,v_*/8$, for all $\theta \in (0, \sigma)$,
	$$M_{\theta}(t)\doteq\int_0^t e^{\theta\,|\bar X_s|^2} \bar X_s\cdot dB_s$$
	is a square integrable martingale. 
	Suppose for now that the claim is true. Then $\E[M_{\theta}(t)]=0$ for all $t\ge 0$ and $\theta \in (0, \sigma)$. 
	By our assumption   $-x\cdot \nabla V(x)\leq  -v_*|x|^2$ for all $x \in \R^d$. Also,  for any $\eta>0$, $|x| \leq \eta +\frac{|x|^2}{4\eta}$. Combining these observations with   Lemma \ref{L:Properties_PPN}, we obtain that
	\begin{equation}\label{E:S_kt}
	dS_{\theta}(t) \leq \E\Big[ e^{\theta\,|\bar X_t|^2}\big(A + B\,|\bar X_t|^2 \big) \Big] \;dt,
	\end{equation}
	for all $\eta>0$, where 
	\begin{align*}
	A = &\,\theta\big(2\chi\,\|\nabla h\|_{\infty}\eta + d\big) , \notag\\
	B = &\, 2\,\theta^2 -2b\,\theta\quad\text{and } b=v_*-\frac{\chi}{4\eta}\|\nabla h\|_{\infty}.\notag
	\end{align*}
	Since $v_*>0$, we can find $\eta \in (0, \infty)$ so that $b = v_*/2$ and consequently, since 
	$\sigma < v_*/2$, with this choice of $\eta$, $B<0$ for all $\theta \in (0, \sigma)$.  Therefore for such $\theta$
	\begin{equation}\label{E:S_kt2}
	dS_{\theta}(t) \leq \kappa \int_{\R^d}(R^2-|x|^2)\, e^{\theta\,|x|^2}\,d\mu_t(x),
	\end{equation}
	for some  $R,\,\kappa\in(0,\infty)$ depending only on $A$ and $B$. Decomposing the integrand on the right of \eqref{E:S_kt2} according to the size of $|x|$, one obtains
	$$dS_{\theta}(t) \leq \big(\kappa_1 -\kappa_2\,S_{\theta}(t)\big)\,dt$$
	where $\kappa_1,\,\kappa_2 \in (0,\infty)$. Since $\theta < \sigma \le \theta_0$, $S_{\theta}(0)<\infty$.
	A standard estimate now shows that $\sup_{t\geq 0}S_{\theta}(t) <\infty$ 
	for all $\theta\in (0,\sigma)$. 
	
	Finally we verify the claim. 
	Using the estimates $-x\cdot \nabla V(x)\leq  -v_*|x|^2$ and $|x| \leq \eta +\frac{|x|^2}{4\eta}$ once again, and choosing $\eta$ such that $b = v_*/2$ as before, we  have 
	by an application of It\^o's formula
	\begin{equation}\label{E:2ndmoment}
	|\bar X_t|^{2} \leq |\bar X_0|^2+ \int_0^t2 \bar X_s\cdot dB_s - \int_0^t v_*|\bar X_s|^2\,ds + \kappa_3\,t
	\end{equation}
	where $\kappa_3=2\chi \|\nabla h\|_{\infty} \eta + d$. Provided that $\sigma_1\leq v_*/4 \wedge \theta_0/2$, we can bound $S_{\sigma_1}(t)$ above by
	\begin{align}\label{E:Bound2}
	& e^{\sigma_1\,\kappa_3\,t\,}\,\E\Big[
	\exp{\Big(\sigma_1|\bar X_0|^2\Big)}\cdot
	\exp{\Big(\int_0^t2\sigma_1 \bar X_s\cdot dB_s -\int_0^t v_*\sigma_1 |\bar X_s|^2\,ds\Big)}
	\Big]\notag\\
	&\quad \leq  e^{\sigma_1\,\kappa_3\,t\,}\,\E\Big[
	\exp{\Big(\sigma_1|\bar X_0|^2\Big)}\cdot
	\exp{\Big(\int_0^t2\sigma_1 \bar X_s\cdot dB_s -\int_0^t 4\sigma_1^2 |\bar X_s|^2\,ds\Big)}
	\Big]\notag\\
	&\quad \le e^{\sigma_1\,\kappa_3\,t\,}\,\sqrt{\E\Big[
		\exp{\Big(2\sigma_1|\bar X_0|^2\Big)}\Big]\cdot\E\Big[
		\exp{\Big(\int_0^t4\sigma_1 \bar X_s\cdot dB_s -\int_0^t 8\sigma_1^2 |\bar X_s|^2\,ds\Big)}
		\Big]} \notag\\
	&\quad \le e^{\sigma_1\,\kappa_3\,t\,}\,\sqrt{\E\Big[
		\exp{\Big(2\sigma_1|\bar X_0|^2\Big)}\Big]}\notag\\
	&\quad \le \kappa_4 e^{\sigma_1\,\kappa_3\,t\,},
	\end{align}
	where the next to last inequality follows on noting that 
	\begin{equation*}
	\exp{\Big(\int_0^t4\sigma_1 \bar X_s\cdot dB_s -8\sigma_1^2\int_0^t | \bar X_s|^2\,ds\Big)}
	\end{equation*}
	is a supermartingale and  in the last inequality we have used the fact that since $\sigma_1 \le \theta_0/2$,
	$\E \exp (2\sigma_1 | \bar X_0|^2) < \infty$.
	Finally for $\theta < \sigma_1/2$ and $t< \infty$
	\begin{align*}
	\int_0^t \E\left[e^{2\theta |X_s|^2}|X_s|^2\right] ds &\le \frac{1}{\sigma_1 - 2\theta} \int_0^t \E e^{\sigma_1 |X_s|^2}\\
	&\le \frac{\kappa_4}{\sigma_1 - 2\theta}\int_0^t e^{\sigma_1\,\kappa_3\,s\,} ds\\
	&< \infty .
	\end{align*}
	This completes the proof of the claim and the result follows.
\end{pf}

\begin{remark}\label{Rk:Smoment} \rm
	(i)	In a similar manner it can be shown that if $\mu_0$ admits a finite square exponential moment of order $\theta_0>0$, then for every $T>0$ there is a $\theta_T>0$ such that
	$$\sup_{N,i}\sup_{0\le t\le T}\E\big[e^{\theta_T\,| X^{N,i}_t|^2}\big] < \infty .$$
	Furthermore if $v_*>0$, we have for all $\theta$ as in part (ii) of the above proposition
	$$\sup_{N,i}\sup_{t\ge 0}\E\big[e^{\theta\,| X^{N,i}_t|^2}\big] < \infty .$$
	(ii) Suppose that instead of assuming that $\mu_0$ has a finite squared exponential moment, we assume that
	$\mu_0 \in \clp_{q_0}(\R^d)$ for some $q_0 \ge 2$.  Then it follows easily that for any fixed $T<\infty$
	$$\sup_{0\le t \le T} \E|\bar X_t|^{q_0} < \infty.$$
	Furthermore, by applying It\^{o} formula to $|x|^q$ instead of $e^{\theta |x|^2}$ one can check that, if in addition $v_*>0$, with $q = \frac{q_0}{2}+1$,  
	$$\sup_{0\le t <\infty} \E|\bar X_t|^q < \infty.$$
	Also, analogous statements as in (i) hold with the squared exponential moment replaced by the $q$-th moment.
\end{remark}

\subsubsection{Time-regularity}\label{s:4.3.3}
In this section we give some estimates on the moments of the increments of the nonlinear process $\bar X$. These estimates are needed to appeal to results in \cite{BGV05} for proofs of our concentration bounds.

We start with moment estimates for $|\bar X_t-\bar X_s|$.
By Lemma \ref{L:Properties_PPN} and our assumption on $\nabla V$ we have
\begin{align*}
|\bar X_t-\bar X_s| &\,\leq |B_t-B_t|+ L_{\nabla V}\int_s^t|\bar X_r|\,dr+C\,(t-s),\quad 0\leq s \leq t<\infty,
\end{align*}
where  $C$  is as in Lemma \ref{L:Properties_PPN}.

Throughout this section we will assume that $S_{\theta_0}(0)<\infty$ for some $\theta_0 >0$.
Taking powers and using Proposition \ref{prop:Smoment}, we obtain the following result.
\begin{lemma}\label{L:moment_st}	
	For all $p\geq 1$ and $T>0$, there exists $C_{T,p}\in(0,\infty)$ such that
	\begin{equation}\label{E:moment_st}
	\E[|\bar X_t-\bar X_s|^{p}] \leq C_{T,p}\,|t-s|^{p/2}\quad\text{for }s,\,t\in[0,T].
	\end{equation}
	Moreover, if Assumption \ref{A:A} is satisfied, then  for some $C_p\in (0,\infty)$
	$$\E[|\bar X_t-\bar X_s|^{p}] \leq C_{p} (|t-s|^p+|t-s|^{p/2})\quad\text{for }s,\,t \ge 0.$$
\end{lemma}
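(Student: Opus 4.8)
The plan is to start from the integral representation
$$
|\bar X_t - \bar X_s| \leq |B_t - B_s| + L_{\nabla V}\int_s^t |\bar X_r|\,dr + C\,(t-s),\quad 0 \leq s \leq t,
$$
which is already recorded just above the statement and follows from the first equation in \eqref{E:NonlinearProcessPP} together with the uniform bound $\|\nabla h\|_\infty \leq C$ from Lemma \ref{L:Properties_PPN}. Raising both sides to the power $p \geq 1$ and using the elementary inequality $(a+b+c)^p \leq 3^{p-1}(a^p+b^p+c^p)$, one reduces the problem to controlling the $p$-th moments of the three terms on the right. The Brownian increment contributes $\E[|B_t-B_s|^p] = \kappa_{d,p}\,|t-s|^{p/2}$; the deterministic term contributes $C^p|t-s|^p$. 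For the middle term, Jensen's (or Hölder's) inequality gives
$$
\E\Big[\Big(\int_s^t |\bar X_r|\,dr\Big)^p\Big] \leq (t-s)^{p-1}\int_s^t \E[|\bar X_r|^p]\,dr \leq |t-s|^p \sup_{r} \E[|\bar X_r|^p].
$$

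The role of Proposition \ref{prop:Smoment} is precisely to bound $\sup_r \E[|\bar X_r|^p]$. For the first assertion, fix $T$; since $S_{\theta_0}(0)<\infty$, part (i) of Proposition \ref{prop:Smoment} gives $\theta_T>0$ with $\sup_{r\in[0,T]} S_{\theta_T}(r)<\infty$, and since $|x|^p \leq C_{p,\theta_T} e^{\theta_T |x|^2}$ for all $x$, this yields $\sup_{r\in[0,T]} \E[|\bar X_r|^p]<\infty$. Combining the three bounds and using $|t-s|^p \leq T^{p/2}|t-s|^{p/2}$ on $[0,T]$ to absorb the $|t-s|^p$ terms into $|t-s|^{p/2}$, we obtain $\E[|\bar X_t-\bar X_s|^p]\leq C_{T,p}|t-s|^{p/2}$ for $s,t\in[0,T]$, which is \eqref{E:moment_st}. (One should note Assumption \ref{A:A} is \emph{not} needed here, only $S_{\theta_0}(0)<\infty$.)

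For the second assertion, observe that Assumption \ref{A:A} implies $v_* > \lambda \geq 0$, hence $v_*>0$, so part (ii) of Proposition \ref{prop:Smoment} applies: there is $\theta>0$ with $\sup_{r\geq 0} S_\theta(r)<\infty$, and therefore $\sup_{r\geq 0}\E[|\bar X_r|^p]<\infty$ for every $p\geq 1$. Feeding this uniform-in-time moment bound into the same three-term estimate above (now with the supremum taken over all $r\geq 0$ rather than $r\in[0,T]$) gives
$$
\E[|\bar X_t-\bar X_s|^p] \leq C_p\,(|t-s|^p + |t-s|^{p/2})\quad\text{for all }s,t\geq 0,
$$
where here, since $t-s$ may be large, both powers must be kept. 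This is the claimed bound.

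The only genuine input is Proposition \ref{prop:Smoment}, which is proved separately; given it, the argument is a routine combination of the integral inequality, Jensen's inequality, and standard Gaussian moment estimates, so there is no real obstacle here — the proof is essentially a bookkeeping exercise in assembling these pieces. The one point requiring a small amount of care is the distinction between the two regimes: on a finite horizon $[0,T]$ the $|t-s|^p$ terms can be subsumed into $|t-s|^{p/2}$ at the cost of a $T$-dependent constant, whereas in the infinite-horizon statement they cannot, which is why the second bound genuinely carries both powers.
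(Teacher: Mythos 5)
Your proof is correct and is essentially the paper's own argument: the paper simply says "taking powers and using Proposition \ref{prop:Smoment}" applied to the same displayed integral inequality, and your write-up is that argument carried out with the bookkeeping (Jensen for the drift integral, Gaussian moments for $|B_t-B_s|$, absorbing $|t-s|^p$ into $|t-s|^{p/2}$ on $[0,T]$) made explicit. Your observations that only the square-exponential moment of $\mu_0$ (standing in that section) is needed for the finite-horizon bound, and that Assumption \ref{A:A} enters the infinite-horizon bound only through $v_*>\lambda\geq 0$ and hence Proposition \ref{prop:Smoment}(ii), match the paper's intent.
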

Lemma \ref{L:moment_st} immediately implies 
\begin{equation}\label{E:W_st}
\mathcal{W}_p(\mu_t,\,\mu_s) \leq \tilde C_{T,p}\,|t-s|^{1/2} \quad\text{for }s,\,t\in[0,T]
\end{equation}
and under Assumption \ref{A:A}, for some $\tilde C_{p}>0$
$$\mathcal{W}_p(\mu_t,\,\mu_s) \leq \tilde C_{p} (|t-s| + |t-s|^{1/2}) \quad\text{for }s,\,t\ge 0.$$

Next we give an  estimate on the exponential moments of the increments.  
\begin{lemma}\label{L:exp_st}
	For all $T>0$, there exist  $\theta_T, C_T \in (0,\infty)$ such that
	\begin{equation*}
	\E\Big[\sup_{a\leq s,t\leq a+b}\exp(\theta_T|\bar X_t-\bar X_s|^2)\Big] \leq 1+C_T\,b
	\end{equation*}
	for all  $a,\,b\in[0,T]$.
\end{lemma}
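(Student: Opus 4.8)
The plan is to reduce the statement to two moment estimates for the window oscillation
\[
D \doteq \sup_{a\le s,t\le a+b}\big|\bar X_t-\bar X_s\big|,
\]
namely $\E[D^4]\le\kappa_T b^2$ and a uniform square--exponential bound, and then to pass from these to the claim via the elementary inequality $e^x-1\le x\,e^x$ valid for $x\ge 0$; note that since $\exp$ is increasing, $\sup_{a\le s,t\le a+b}\exp(\theta|\bar X_t-\bar X_s|^2)=e^{\theta D^2}$, so only $\E[e^{\theta_T D^2}]$ needs to be controlled. For the pathwise bound on $D$ I would write, for $a\le s\le t\le a+b$,
\[
\bar X_t-\bar X_s=(B_t-B_s)+\int_s^t\big(\chi\,\nabla h(r,\bar X_r)-\nabla V(\bar X_r)\big)\,dr,
\]
use $\|\nabla h\|_\infty\le C$ from Lemma \ref{L:Properties_PPN} and $|\nabla V(x)|\le L_{\nabla V}|x|$ (recall $\nabla V(0)=0$), and apply Gronwall's lemma to $|\bar X_t|$ on $[a,a+b]$. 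This yields $\sup_{a\le r\le a+b}|\bar X_r|\le\big(|\bar X_a|+W_a'+\chi C b\big)e^{L_{\nabla V}b}$ with $W_a'\doteq\sup_{a\le r\le a+b}|B_r-B_a|$, and hence, using $b\le T$,
\[
D\le W+b\,\bar Z,\qquad W\doteq\sup_{a\le s,t\le a+b}|B_t-B_s|,\quad \bar Z\doteq \chi C+L_{\nabla V}e^{L_{\nabla V}T}\big(|\bar X_a|+W_a'+\chi C T\big).
\]

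Next I would collect the required moment bounds, all \emph{uniform} over $a,b\in[0,T]$. Since $W\le 2W_a'$ and $W_a'$ has the same law as $\sqrt b\,\sup_{0\le r\le1}|B_r|$, Brownian scaling gives $\E[W^4]\le\kappa_T b^2$ and, for a sufficiently small $\theta_T>0$, $\sup_{b\le T}\E[e^{8\theta_T W^2}]<\infty$. The standing assumption of this subsection, $S_{\theta_0}(0)<\infty$, together with Proposition \ref{prop:Smoment}(i) provides $\theta'>0$ with $\sup_{0\le a\le T}\E\big[\,|\bar X_a|^4+e^{\theta'|\bar X_a|^2}\,\big]<\infty$; combining this with the Brownian estimates and possibly shrinking $\theta_T$, one gets $\sup_{a,b\le T}\E[\bar Z^4]<\infty$ and $\sup_{a,b\le T}\E[e^{8\theta_T T^2\bar Z^2}]<\infty$. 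Using $D\le W+T\bar Z$, $b^4\le T^2b^2$ and the Cauchy--Schwarz inequality, this gives
\[
\E[D^4]\le 8\,\E[W^4]+8\,b^4\,\E[\bar Z^4]\le \kappa_T\,b^2,\qquad M_T\doteq\sup_{a,b\in[0,T]}\E\big[e^{2\theta_T D^2}\big]<\infty .
\]

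Finally, from $e^x-1\le x e^x$ and Cauchy--Schwarz,
\[
\E\big[e^{\theta_T D^2}\big]-1\le \theta_T\,\E\big[D^2e^{\theta_T D^2}\big]\le \theta_T\big(\E[D^4]\big)^{1/2}\big(\E[e^{2\theta_T D^2}]\big)^{1/2}\le \theta_T\sqrt{\kappa_T M_T}\;b,
\]
which is the assertion with $C_T\doteq\theta_T\sqrt{\kappa_T M_T}$. I expect the only genuinely delicate point to be bookkeeping the uniformity in the base time $a$: one must make sure that none of the constants hides a dependence on $a$, which is guaranteed by the stationarity of the Brownian increments (so the laws of $W$ and $W_a'$ depend on $b$ only) and by Proposition \ref{prop:Smoment}(i) and Remark \ref{Rk:Smoment}, which supply $a$--uniform polynomial and square--exponential moment bounds for $\bar X_a$ on $[0,T]$. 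Alternatively, the polynomial bound $\E[D^4]\le\kappa_T b^2$ could be obtained from Lemma \ref{L:moment_st} via a Kolmogorov-type chaining estimate, but the pathwise bound derived above is needed in any case to control $M_T$.
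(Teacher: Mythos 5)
Your argument is correct, but it takes a genuinely different route from the paper's. The paper first reduces, via Cauchy--Schwarz, to increments $|\bar X_t-\bar X_a|$ from the left endpoint, then freezes the drift and the starting point by working with the class $\clh_C$ of bounded Lipschitz drifts and the processes $Y^{v,z}$, and applies It\^o's formula to $e^{\theta(t)|Y_t-z|^2}$ with a \emph{time-dependent} exponent chosen to solve the Riccati-type ODE $\eta\theta+2\theta^2+\theta'=0$, so that the quadratic term $B_r|Y_r-z|^2$ vanishes identically; the remaining martingale and $A_r$ terms are then handled exactly as in \cite[Section 5.1]{BGV05}, using the square-exponential bound \eqref{eq:eq627}. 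You instead bound the windowed oscillation $D$ pathwise by $W+b\,\bar Z$ via Gronwall (using $\|\nabla h\|_\infty\le C$ from Lemma \ref{L:Properties_PPN} and $|\nabla V(x)|\le L_{\nabla V}|x|$), derive $\E[D^4]\le \kappa_T b^2$ and a uniform bound on $\E[e^{2\theta_T D^2}]$ from Brownian scaling together with the $[0,T]$-uniform square-exponential moments of Proposition \ref{prop:Smoment}(i), and conclude with $e^x-1\le x e^x$ and Cauchy--Schwarz. Your route is more elementary and self-contained: it avoids the It\^o/ODE device and the conditioning over $\clh_C$ entirely, at the price of leaning on the a priori exponential moment bound for $\bar X_a$ (which is in any case the standing assumption of that subsection and is also what underlies \eqref{eq:eq627}) and of a small bookkeeping step when splitting $e^{c\bar Z^2}$ into the $|\bar X_a|$ and $W_a'$ factors (independence of the post-$a$ Brownian increments from $\clf_a$, or one more Cauchy--Schwarz with a further shrunk $\theta_T$, settles it). The paper's construction, by contrast, mirrors \cite{BGV05} so that the subsequent Proposition \ref{prop:nu_st} can be quoted verbatim, and the freezing over $(z,v)\in\R^d\times\clh_C$ makes the uniformity in the base time and in the (measure-dependent) drift completely explicit; it would also adapt more readily to situations where only one-sided or unbounded drift conditions preclude a crude pathwise Gronwall bound. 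For the lemma as stated, either proof is adequate.
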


\begin{pf}
	By an application of Cauchy-Schwarz inequality we see that it suffices to show that for some $\theta_T, C_T \in (0,\infty)$
	\begin{equation}\label{eq:eq842}
	\E\Big[\sup_{a\leq t\leq a+b}\exp(\theta_T|\bar X_t-\bar X_a|^2)\Big] \leq 1+C_T\,b
	\end{equation}
	for all $a,b\in [0,T]$.  Let $C$ be as in Lemma \ref{L:Properties_PPN} and $\clh_C$ be the class of all functions
	$v:[0,T]\times \R^d \to \R^d$ such that
	$$\sup_{t\ge 0} |v(t,x) - v(t,y)| \le C|x-y|,\; \sup_{t\ge 0} |v(t,x)| \le C \mbox{ for all } x,y \in \R^d.$$
	Note that from Lemma \ref{L:Properties_PPN}, for every $m \in \mathcal{P}\big(\mathcal{C}([0,\infty):\R^d)\big)$,
	$\nabla h^m \in \clh_C$.
	
	Given $v \in \clh_C$ and $z\in \R^d$, let $Y^{v,z}$ be the solution of the stochastic differential equation
	$$
	dY^{v,z}_t = dB_t - \nabla V(Y^{v,z}_t) + \chi v(t, Y^{v,z}_t)dt, \; Y^{v,z}_0 = z.$$
	By a standard conditioning argument it suffices to argue that for some $\theta_T, C_T \in (0,\infty)$
	\begin{equation}
	\label{eq:eq851}
	\sup_{z\in \R^d, v \in \clh_C} \E\left[ \sup_{0\le t \le b}\exp(\theta_T|Y^{v,z}_t-z|^2)\right] \leq 1+C_T\,b .
	\end{equation}
	Fix $(z,v) \in \R^d \times \clh_C$ and suppress it in the notation (i.e. write $Y^{v,z}$ as $Y$).
	Let $\theta:[0,T] \to \R$ be a non-negative  continuously differentiable function and write for $t \in [0, b]$
	$$Z_{t}\doteq e^{\theta(t)\,|Y_t-z|^2}.$$
	Using It\^o's formula we obtain, 
	\begin{align}\label{E:Ito_st}
	dZ_t &=\,  Z_t\Big[2\theta(t) (Y_t-z)\cdot (dB_t -\nabla V(Y_t)\,dt +\chi v(t,Y_t)\,dt) \notag\\
	&\qquad\qquad +\theta(t)\big(d+2\theta(t) |Y_t-z|^2\big)\,dt + \theta'(t)|Y_t-z|^2\,dt\Big].
	\end{align}
	Integrating, we obtain
	\begin{align}\label{E:Ito_st2}
	Z_{t} -1
	=&\, M_{t}+ \int_0^t Z_{r}\Big[2\theta(r) (Y_r-z)\cdot \big(-\nabla V(Y_r) +\chi v(r,Y_r)\big) \notag\\
	&\qquad\qquad +\theta(r)\big(d+2\theta(r) |Y_r-z|^2\big) + \theta'(r)|Y_r-z|^2\Big]\;dr,
	\end{align}
	where $M_{t}\doteq 2\int_0^t Z_{r} \theta(r) (Y_r-z)\cdot dB_r$.  
	From a similar argument as for the proof of Proposition \ref{prop:Smoment}(i) , there is a $\varsigma >0$
	such that
	\begin{equation}\label{eq:eq627}
	\sup_{(z,v) \in \R^d \times \clh_C}\sup_{0\le s \le T} \E e^{\varsigma |Y_s^{z,v}-z|^2} < \infty.
	\end{equation}
	One of the properties of  $\{\theta(t)\}_{0\le t \le T}$ chosen below will be that $\sup_{0\le t \le T} \theta(t) < \varsigma/2$.
	With such a choice of $\{\theta(t)\}$, $\{M_t\}$ is a martingale and consequently $\E(M_t)=0$ for all $t\ge 0$.
	
	%
	Applying Young's inequality we see  that for every $\eta >0$
	$$2 (Y_r-z)\cdot \big(-\nabla V(Y_r) +\chi v(r,Y_r)\big) \leq \eta |Y_r-z|^2 + \frac{|-\nabla V(Y_r) +\chi v(r,Y_r)|^2}{\eta},$$
	and thus
	\begin{align}\label{E:Ito_st3}
	Z_{t} 
	\leq &\, 1 +M_{t}+ \int_0^t Z_{r}\big(A_r + B_r\,|Y_r-z|^2\big)\;dr,
	\end{align}
	where 
	\begin{align*}
	A_r = &\,\theta(r)\,\Big(d+\frac{L_{\nabla V}|Y_r|+\chi C}{\eta}\Big) \notag\\
	B_r = &\,\eta\,\theta(r)+2\theta^2(r)+\theta'(r).\notag
	\end{align*}
	Rest of the argument is similar to \cite[Section 5.1]{BGV05} and so we only give a sketch.
	Choose $\theta(r)$ to be the solution of the ODE
	$$\eta\,\theta(r)+2\theta^2(r)+\theta'(r)=0$$
	with $\theta(0)$ to be a strictly positive and smaller than $\varsigma/2$.  It is easy to see that the solution is decreasing and strictly positive.
	Thus $B_r$ is identically zero and $ 0 < \theta(T)\le \theta(t) \le  \varsigma/2 $ for every $t\in [0,T]$.  
	As a consequence
	$$\E \sup_{0\le t \le b} Z_t \le 1 + \E \sup_{0\le t \le b} M_t + \int_0^b \E Z_r A_r dr.$$
	Using the bound in \eqref{eq:eq627} it is now checked exactly as in \cite{BGV05} that
	$$\sup_{(z,v) \in \R^d \times \clh_C}\sup_{0\le s \le T} \E Z_r A_r < \infty \mbox{ and }
	\sup_{(z,v) \in \R^d \times \clh_C}\E \sup_{0\le s \le b} M_s \le \tilde C_T b,$$
	for some $\tilde C_T < \infty$.  This proves \eqref{eq:eq851} and thus the result follows.
\end{pf}
\bigskip

With Lemma \ref{L:exp_st}, standard estimates for Brownian motion and the Chebyshev's exponential inequality then yields the following time-regularity of the empirical measures $\nu^N$. The proof is contained  in \cite[Section 5.2, page 577-578]{BGV05} and is omitted here.
\begin{proposition}\label{prop:nu_st}
	For any $T>0$, there exist $C_1,\,C_2\in(0,\infty)$ such that
	\begin{align*}
	\mathbb{P}\Big(\sup_{a\leq s,t\leq a+b}\mathcal{W}_1(\nu^N_s,\,\nu^N_t) >\epsilon\Big) \,\leq \exp\big(-N(C_1\,\epsilon^2-C_2\,b)\big) 
	\end{align*}
	for all $a,\,b\in[0,T]$ and $\epsilon>0$.
\end{proposition}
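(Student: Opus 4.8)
The plan is to reduce the statement to a Cram\'er-type exponential bound for an average of i.i.d.\ random variables that control the oscillation of the trajectories $\bar X^i$ on the window $[a,a+b]$; this is essentially the argument of \cite[Section 5.2]{BGV05}. The first step is to pass from the Wasserstein distance to path increments. Since $\nu^N_s=\frac1N\sum_{i=1}^N\delta_{\bar X^i_s}$, the empirical coupling $\frac1N\sum_{i=1}^N\delta_{(\bar X^i_s,\bar X^i_t)}$ has marginals $\nu^N_s$ and $\nu^N_t$, whence
$$\mathcal{W}_1(\nu^N_s,\nu^N_t)\le \frac1N\sum_{i=1}^N |\bar X^i_s-\bar X^i_t| \qquad \text{for all } s,t\ge 0$$
(this also follows from the Kantorovich--Rubenstein duality \eqref{E:KR_Duality}, since every $1$-Lipschitz $f$ obeys $|f(\bar X^i_s)-f(\bar X^i_t)|\le|\bar X^i_s-\bar X^i_t|$). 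As $r\mapsto\bar X^i_r$ is continuous, one can take the supremum over rational, hence over all, $s,t\in[a,a+b]$ inside the finite average, so that
$$\sup_{a\le s,t\le a+b}\mathcal{W}_1(\nu^N_s,\nu^N_t)\le \frac1N\sum_{i=1}^N W_i,\qquad W_i\doteq \sup_{a\le s,t\le a+b}|\bar X^i_s-\bar X^i_t|,$$
where $\{W_i\}_{i=1}^N$ are i.i.d.\ copies of $W\doteq\sup_{a\le s,t\le a+b}|\bar X_s-\bar X_t|$.

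Next I would quantify the tail of $W$. By Lemma \ref{L:exp_st}, for $a,b\in[0,T]$ there exist $\theta_T,C_T\in(0,\infty)$ with $\E[e^{\theta_T W^2}]\le 1+C_T b$. Young's inequality $\lambda W\le \frac{\lambda^2}{4\theta_T}+\theta_T W^2$ then gives $\E[e^{\lambda W}]\le e^{\lambda^2/(4\theta_T)}(1+C_T b)$ for every $\lambda>0$, so by the exponential Chebyshev inequality and independence,
$$\mathbb{P}\Big(\frac1N\sum_{i=1}^N W_i>\epsilon\Big)\le e^{-\lambda N\epsilon}\big(\E[e^{\lambda W}]\big)^N\le \exp\!\Big(N\big(-\lambda\epsilon+\frac{\lambda^2}{4\theta_T}+\log(1+C_T b)\big)\Big).$$
Using $\log(1+x)\le x$ and taking $\lambda=2\theta_T\epsilon$, for which $-\lambda\epsilon+\frac{\lambda^2}{4\theta_T}=-\theta_T\epsilon^2$, yields the asserted bound with $C_1=\theta_T$ and $C_2=C_T$. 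Both depend only on $T$.

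The two displayed chains are routine. The step requiring the most care is the first reduction: one must verify that $\sup_{a\le s,t\le a+b}\mathcal{W}_1(\nu^N_s,\nu^N_t)$ is a genuine random variable (reduce to a countable dense set of times using path continuity and separability of $\R^d$) and justify moving the supremum through the finite average, and one must confirm that the constants supplied by Lemma \ref{L:exp_st} are uniform over all $a,b\in[0,T]$. The ``standard estimates for Brownian motion'' referenced in the statement are precisely those already absorbed into the proof of Lemma \ref{L:exp_st} (control of the stochastic integral and of $\sup_{0\le t\le b}|Y^{v,z}_t-z|$), so no additional Brownian input is needed here.
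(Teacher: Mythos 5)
Your proof is correct and follows essentially the same route as the argument the paper invokes (it omits its own proof and points to \cite[Section 5.2]{BGV05}): bound $\mathcal{W}_1(\nu^N_s,\nu^N_t)$ by the empirical average of path oscillations $W_i$, then combine the square-exponential increment bound of Lemma \ref{L:exp_st} with Young's inequality and the exponential Chebyshev (Chernoff) bound for the i.i.d.\ sum, optimizing in $\lambda$. Your choice $\lambda=2\theta_T\epsilon$ and $\log(1+C_Tb)\le C_Tb$ indeed yields the stated bound with $C_1=\theta_T$, $C_2=C_T$, uniformly over $a,b\in[0,T]$ and $\epsilon>0$.
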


\begin{remark}
	Using Lemma \ref{L:Properties_PPN}, it is immediate that Lemma \ref{L:exp_st} and Proposition \ref{prop:nu_st} remain valid with the same constants if we replace $\bar X_t$ by $X^{N,i}_t$ and $\nu^N$ by $\mu^N$ respectively.
\end{remark}

\subsubsection{Proofs for the concentration bounds}

Equipped with the results Sections \ref{s:4.3.1}, \ref{s:4.3.2} and \ref{s:4.3.3} in the previous subsections, the proofs of Theorem \ref{T:Concen_PP} and Theorem \ref{T:Concen_PP_longtime} can be completed as in
\cite[Section 7.1]{BGV05} and \cite[Section 7.2]{BGV05} respectively. We only provide a sketch.

For Theorem \ref{T:Concen_PP}, note that first bound in Corollary \ref{cor:cor619} implies
\begin{equation*}
\sup_{t\in[0,T]}\mathcal{W}_1(\mu^N_t,\,\mu_t)\leq \kappa_T\,\sup_{t\in[0,T]}\mathcal{W}_1(\nu^N_t,\,\mu_t).
\end{equation*}
This in turn implies
\begin{equation*}
\mathbb{P}\Big(\sup_{t\in[0,T]}\mathcal{W}_1(\mu^N_t,\,\mu_t)>\epsilon \Big)  \leq \mathbb{P}\Big(\sup_{t\in[0,T]}\mathcal{W}_1(\nu^N_t,\,\mu_t)>\tilde{\epsilon} \Big),
\end{equation*}
where $\tilde{\epsilon}=\epsilon/\kappa_T$.
which is analogous to equation (75) in \cite{BGV05}.
Part (i) of Proposition \ref{prop:Smoment} guarantees that we can apply Theorem \ref{T:BGV1.1} to assert that for any $d'\in (d,\infty)$ and $\theta' \in(0,\theta)$, there exists a positive integer $N_0$ such that
\begin{equation*}
\sup_{t\in[0,T]}\mathbb{P}\Big(\mathcal{W}_1(\nu^N_t,\,\mu_t)>\tilde{\epsilon} \Big) \leq \exp{\big(-\kappa_{2,T}  \,\theta'\, N \, \eps^2\big)}
\end{equation*}
for all $\eps >0$ and $N \geq N_0\,\max(\eps^{-(d'+2)},\,1) $. 

To complete the proof of Theorem \ref{T:Concen_PP}, it remains to improve this estimate by interchanging $\sup_{t\in[0,T]}$ and $\mathbb{P}$. This ``exchange" can be achieved using the continuity estimates on $\nu^N_t$ and $\mu_t$ in Subsection \ref{s:4.3.3}. Details can be verified as in \cite[Section 7.1]{BGV05}. 
Specifically, Corollary 4.2, Proposition 4.3 and Proposition 5.1 in \cite{BGV05} are replaced by, respectively, part (i) of Proposition \ref{prop:Smoment},  \eqref{E:W_st} with $p=1$, and Proposition \ref{prop:nu_st}.

For Theorem \ref{T:Concen_PP_longtime}, we  start from the second bound in Corollary \ref{cor:cor619} and argue as in \cite[Section 7.2]{BGV05}. The key ingredient is
Proposition 4.1 in \cite{BGV05}. This result is   replaced by part (ii) of Proposition \ref{prop:Smoment} which gives uniform in time estimate for the square exponential moment for $\mu_t$. We omit the details.
%
%
%

\subsection{Uniform convergence of Euler scheme} 
\label{sec:secpfeuler}
In the proofs of Lemma \ref{L:Ybound} and Theorem  \ref{EScheme}, we need to solve
difference inequalities which are harder to handle than similar differential inequalities that appeared in the proofs of Theorem \ref{T:UniformPOC_PP} and Proposition \ref{prop:W1PP}.

\begin{proof}[Proof of Lemma \ref{L:Ybound}]
	From integration by parts in \eqref{G_theta} we see that
	\begin{equation*}
	G_\theta(\vec{x},y) =  \frac{-\chi\beta e^{-\alpha\theta}}{N}\sum_{j=1}^N\int_{\R^d} p(\theta,y, z)\nabla g(x_j-z)\,dz
	\end{equation*}
	and thus
	\begin{equation}\label{Gnorm}
	\sup_{N\geq 1}\sup_{\vec{x}\in \R^{dN}}\sup_{y\in\R^d}|G_\theta(\vec{x},y)| \le\chi\beta\, e^{-\alpha\theta}\|\nabla g\|_\infty.
	\end{equation}
	Also recalling that $V_t = V - \chi Q_t h_0$, 
	\begin{align}\label{Vtnorm}
	\sup_{t\ge0}|\nabla V_t(x)|&\le |\nabla V(x)| + \chi \sup_{t\ge0}|\nabla Q_th_0(x)|\le \kappa_1\,(|x|+1).
	\end{align}
	
	Recall from \eqref{EPPN2} that
	\begin{equation} 
	Y_{n+1}^{i} = Y_n^{i}+\Delta_nB^i + \epsilon\,\Big( \int_{0}^{n\epsilon}G_{n\epsilon-s}(\tilde{Y}_s^{(N),\epsilon},\,Y_n^{i})\,ds - \nabla V_{n\epsilon}(Y_n^{i})\Big)
	\end{equation}
	for $1\leq i\leq N$,
	where we have suppressed $N, \epsilon$ in the notation. Also note that $|\nabla (Q_th_0)(y)| \le  \|\nabla h_0\|_{\infty}$ for every $y \in \R^d$.
	Hence by \eqref{Gnorm} and \eqref{Vtnorm}, 
	\begin{align*}
	|Y_{n+1}^i|^2 \,\leq \,& -2\epsilon Y_n^i \cdot \nabla V_{n\epsilon} (Y_n^i)+ |Y_n^i|^2+|\Delta_nB^i|^2\\
	&\quad  + \kappa_2\,\epsilon^2(|Y_n^i|^2+1)+ \xi_n^i\cdot \Delta_nB^i + \kappa_3\,\epsilon |Y_n^i|
	\end{align*}
	where $\kappa_4 = \kappa_3 + 2\chi \|\nabla h_0\|_{\infty}$ and $\xi_n^i$ is measurable with respect to $\F_{n\epsilon}^{B^i} \doteq \sigma \{B^i_s: 0 \le s \le n\epsilon\}$. Note that $\E[\xi_n\Delta_nB^i] = 0$ for every $n,i$. Our assumption on $V$ then gives
	\begin{align*}
	\E |Y_{n+1}^i|^2 \le& -2\epsilon v_* \E|Y_n^i|^2 + \E|Y_n^i|^2 +\epsilon + \kappa_2\epsilon^2 \E|Y_n^i|^2 + \kappa_2\epsilon^2+ \kappa_4\epsilon \E|Y_n^i|.
	\end{align*}
	Let $a_n = \E|Y_n^i|^2$. Then for $\epsilon$ small enough, we have the nonlinear difference inequality
	\begin{equation}\label{Ybound}
	a_{n+1}\le (1-\epsilon v_*)a_n +\kappa\,\epsilon \sqrt{a_n} +2\epsilon
	\end{equation}
	Note that by our assumption $v_* >0$. By Young's inequlity
	%
	$\kappa\,\sqrt{a_n} \leq \eta\,a_n + \kappa^2/(4\eta)$ for all $\eta>0$. 
	Taking $\eta=v_*/2$ we obtain
	\begin{equation}\label{Ybound2}
	a_{n+1}-\delta\,a_n \le A
	\end{equation}	
	where $\delta=1-\frac{v_*\epsilon}{2} \text{ and } A=\epsilon\Big(\frac{\kappa^2}{2v_*}+2\Big)$.
	Note that $\delta\in(0,1)$ for $\epsilon>0$ small enough. Multiplying both sides of \eqref{Ybound2} by  $\delta^{-n}$, we obtain
	\begin{equation}\label{Ybound3}
	b_{n+1}-b_n \le A\,\delta^{-n} 
	\end{equation}	
	where $b_n=a_n\,\delta^{-(n-1)}$.
	Summing over $n$ then gives
	\begin{equation*}
	b_{n+1}-b_1 \le A\sum_{i=1}^n\delta^{-i}.
	\end{equation*}
	Since $a_0=0$ (giving $b_1\leq A$), we obtain
	\begin{equation*}
	a_{n+1} \le A\,\Big(\frac{1-\delta^{n+1}}{1-\delta}\Big) =\frac{2}{v_*}\Big(1-\big(1-v_*\epsilon/2\big)^{n+1}\Big)
	\Big(\frac{\kappa^2}{2v_*}+2\Big). 
	\end{equation*}	
	Thus we have
	\begin{equation}
	\sup_{n\ge0} a_n\leq \frac{2}{v_*}\Big(\frac{\kappa^2}{2v_*}+2\Big).
	\end{equation}
	The proof is complete.
\end{proof}

\begin{remark}\rm
	By \eqref{Gnorm} and \eqref{Vtnorm}, we also have (suppressing $N$ and $\eps$ in notation)
	\begin{align*}
	|Y^i_n-Y^i_{n-1}|&\le |\Delta_{n-1}B^i|+\epsilon\Big|\nabla V_{(n-1)\epsilon}(Y^i_{n-1})-\int_0^{(n-1)\epsilon}G_{(n-1)\epsilon-s}(\tilde{Y}_s,Y^i_{n-1})\,ds\Big|\\
	&\le |\Delta_{n-1}B^i| +\epsilon\Big(\kappa_1\,(|Y^i_{n-1}|+1)+\frac{\chi \beta\|\nabla g\|_\infty}{\alpha} \Big)\\
	&\le |\Delta_{n-1}B^i|+\epsilon\,\kappa_2\, (|Y^i_{n-1}|+1).
	\end{align*}
	From this and the uniform bounds in Lemma \ref{L:Ybound} we have that if $v_*>0$, 
	\begin{equation}\label{Yincp}
	\sup_{i,n}\E|Y^i_n-Y^i_{n-1}|^2\le \kappa_3\,\epsilon
	\end{equation}
	Similarly, with $X^{i,N}_t$ as in \eqref{PPN2}, 
	for $t\in[(n-1)\epsilon,n\epsilon]$,
	\begin{equation*}
	|X^{i,N}_t-X^{i,N}_{n\epsilon}|\le |B^i_t-B^i_{n\epsilon}|+\kappa_2\,\int_t^{n\epsilon}(|X^{i,N}_r|+1)\,dr 
	\end{equation*}
	and by the uniform bound for $\sup_{t\geq 0}\E|X^{i,N}_t|$ (see Remark \ref{Rk:Smoment}), 
	\begin{equation}\label{Xincp}
	\sup_{i,N}\E|X^{i,N}_t-X^{i,N}_{n\epsilon}|^2\le \kappa_3\,|t-n\epsilon|\quad\text{for all }n\geq 1,\, t \in [(n-1)\epsilon,n\epsilon].
	\end{equation}
\end{remark}

\begin{proof}[Proof of Theorem \ref{EScheme}]
	To simplify notations, we supress $N$ and $\epsilon$ to write $Y_n^i$ for $Y_n^{i,N,\epsilon}$ and $X_t^i$ for $X_t^{i,N}$. Denote $Z^i_n \triangleq Y_n^i-X_{n\epsilon}^i$ to be the error of the scheme.
	From \eqref{EPPN2} and \eqref{PPN2} we obtain \begin{equation} \label{Z_n}
	Z^i_n = Z^i_{n-1}+a^i_n+b^i_n\quad \text{for } n\geq 1,\,1\leq i\leq N,
	\end{equation}
	where 
	\begin{align*}
	a^i_n &= \int_{(n-1)\epsilon}^{n\epsilon} \Big(\int_0^{(n-1)\epsilon}G_{(n-1)\epsilon-s}(\tilde{Y}_s,Y^i_{n-1})\,ds - \int_0^t G_{t-s}(X_s,X^i_t)\,ds \Big)\,dt\\
	b^i_n &= \int_{(n-1)\epsilon}^{n\epsilon}\left[-\nabla V_{(n-1)\epsilon}(Y^i_{n-1})+\nabla V_t(X^i_t)\right]\,dt.
	\end{align*}
	From \eqref{Z_n} we have 
	$Z^i_n = \sum_{k=1}^n (a^i_k+b^i_k)$. 
	Hence \begin{equation} \label{Z_ninc}
	|Z^i_n|^2-|Z^i_{n-1}|^2 = (a^i_n+b^i_n)\,\big(Z^i_n+Z^i_{n-1}\big).
	\end{equation}
	
	{\bf Step (i): }
	We first estimate $|b^i_n\,Z^i_n+b^i_n\,Z^i_{n-1}|$.  	
	For this, we shall use the estimates
	\begin{align}
	|\nabla Q_th_0(y)- \nabla Q_th_0(x)|&\le e^{-\alpha t}d\|\Hess h_0\|_{\infty}|x-y|.\label{Qtxy}\\
	|\nabla Q_th_0(x)- \nabla Q_sh_0(x)|&\le \Big|e^{-\alpha t} \big(\nabla P_th_0(x)- \nabla P_sh_0(x)\big)+(e^{-\alpha t}-e^{-\alpha s})\nabla P_sh_0(x)\Big|\notag\\
	&\le d\,\|\Hess h_0\|_\infty e^{-\alpha t}\sqrt{t-s} \,+\, \|\nabla h_0\|_\infty \alpha\,e^{-\alpha\,(t\wedge s)}|t-s|. \label{Qtsy}
	\end{align}
	By \eqref{Qtxy}, \eqref{Qtsy} and our convexity assumption on $V$,
	\begin{align}\label{bnZn0}
	b^i_n\,Z^i_n &= \int_{ (n-1)\epsilon}^{n\epsilon} \Big [- \nabla V_{(n-1)\epsilon}(Y^i_{n-1})+\nabla V_{n\epsilon}(Y^i_n) \notag\\
	&\qquad\qquad\,\, +\nabla V_t(X^i_t)-\nabla V_{n\epsilon}(X^i_{n\epsilon}) -\nabla V_{n\epsilon}(Y^i_n) +\nabla V_{n\epsilon}(X^i_{n\epsilon})\Big]\,dt\cdot (Y^i_n-X^i_{n\epsilon}) \notag\\	
	&\le -v_*\,\epsilon\, |Z^i_n|^2 \,+ |Z^i_n|\int_{(n-1)\epsilon}^{n\epsilon} \Big(\chi d\,\|\Hess h_0\|_{\infty}\,|Z^i_n|\notag\\
	&\qquad\qquad\qquad\qquad +|-\nabla V(Y^i_{n-1})+\nabla V(Y^i_n)|+\kappa_1\,\Big[e^{-\alpha n\epsilon}|Y^i_n-Y^i_{n-1}|+\sqrt{\epsilon}\Big]\notag\\
	&\qquad\qquad\qquad\qquad +|\nabla V(X^i_{t})-\nabla V(X^i_{n\epsilon})|+\kappa_1\,\Big[e^{-\alpha n\epsilon}|X^i_t-X^i_{n\epsilon}|+\sqrt{\epsilon}\Big]\Big)\,dt \notag\\		
	&\le -v_*\,\epsilon\, |Z^i_n|^2 \,+ |Z^i_n|\int_{(n-1)\epsilon}^{n\epsilon} \Big(\chi d\,\|\Hess h_0\|_{\infty}\,|Z^i_n|\notag\\
	&\qquad\qquad\qquad\qquad + L_{\nabla V} |Y^i_n-Y^i_{n-1}|+\kappa_1\,\Big[e^{-\alpha n\epsilon}|Y^i_n-Y^i_{n-1}|+\sqrt{\epsilon}\Big]\notag\\
	&\qquad\qquad\qquad\qquad +L_{\nabla V}|X^i_t-X^i_{n\epsilon}|+\kappa_1\,\Big[e^{-\alpha n\epsilon }|X^i_t-X^i_{n\epsilon}|+\sqrt{\epsilon}\Big]\Big)\,dt \notag\\
	&\le\big(-v_*+ \chi d\,\|\Hess h_0\|_{\infty}\big)\,|Z^i_n|^2\epsilon\,+ 2\kappa_1\,\epsilon^{3/2}|Z^i_n|\notag\\
	&\qquad +\eps \Big(L_{\nabla V}+\kappa_1\,e^{-\alpha n\epsilon}\Big)|Z^i_n|\,|Y^i_n-Y^i_{n-1}|\notag\\
	&\qquad +\Big(L_{\nabla V}+\kappa_1\,e^{-\alpha n\epsilon}\Big)|Z^i_n| \int_{ (n-1)\epsilon}^{n\epsilon} |X^i_t-X^i_{n\epsilon}|\,dt.
	\end{align}
	Taking expectations in \eqref{bnZn0} and using \eqref{Yincp} and \eqref{Xincp} we obtain
	\begin{equation}\label{bnZn}
	\E[b^i_n\,Z^i_n]\le \big(-v_*+ \chi d\,\|\Hess h_0\|_{\infty}\big) \,\epsilon\,\E |Z^i_n|^2 +\kappa_2\, \epsilon^{3/2}\,\sqrt{\E|Z^i_n|^2}.
	\end{equation}
	
	The same argument gives
	\begin{equation}\label{bnZn-1}
	\E[b^i_n\,Z^i_{n-1}]\le \big(-v_*+ \chi d\,\|\Hess h_0\|_{\infty}\big) \,\epsilon\,\E |Z^i_{n-1}|^2 +\kappa_2\, \epsilon^{3/2}\,\sqrt{\E|Z^i_{n-1}|^2}.
	\end{equation}
	
	{\bf Step (ii) }
	Next we estimate $|a^i_n|$. By \eqref{Gnorm}, 
	\begin{equation*}
	|a^i_1| = \Big|\int_0^\epsilon\int_0^t G_{t-s}(X_s,X^i_t)\,ds\,dt\Big|\le \chi\beta\, \|\nabla g\|_\infty \frac{\epsilon^2}{2}.
	\end{equation*}
	For $n\ge2$, by \eqref{Gnorm} again,
	\begin{align}
	|a^i_n|&=\Big|\int_{ (n-1)\epsilon}^{n\epsilon}\Big( \int_0^{(n-1)\epsilon} \Big (G_{(n-1)\epsilon-s}(\tilde{Y}_s,Y^i_{n-1})-G_{t-s}(X_s,X^i_t)\Big)\,ds \notag \\
	&\qquad\qquad\quad- \int_{ (n-1)\epsilon}^t G_{t-s}(X_s,X^i_t)\,ds\Big)\,dt\,\Big|\notag\\
	&\le \Big|\int_{ (n-1)\epsilon}^{n\epsilon} A_n^{(1,i)}(t)+A_n^{(2,i)}(t)\,dt\Big| + \chi\beta\,\|\nabla g\|_\infty \frac{\epsilon^2}{2},\label{an}
	\end{align} 
	where
	\begin{align*}
	A_n^{(1,i)}(t) &= \int_0^{(n-1)\epsilon}\Big(G_{(n-1)\epsilon-s}(X_s,X^i_t) - G_{t-s}(X_s,X^i_t)\Big)\,ds\\
	A_n^{(2,i)}(t) &= \int_0^{(n-1)\epsilon} \Big(G_{(n-1)\epsilon-s}(\tilde{Y}_s,Y^i_{n-1})-G_{(n-1)\epsilon-s}(X_s,X^i_t)\Big)\,ds.
	\end{align*}
	
	For $A_n^{(1,i)}(t)$, using that $g \in \clc_b^2(\R^d)$, we have for $\theta_1\le\theta_2$, $|\theta_1-\theta_2| \le \epsilon$
	\begin{align*}
	&\quad|G_{\theta_1}(\vec{x},y)-G_{\theta_2}(\vec{x},y)|\\
	&\le \frac{ \chi\beta\,e^{-\alpha\theta_1}}{N}\sum_{j=1}^N\int_{ \R^d} \big(p(\theta_1,y,z)-p(\theta_2,y,z)\big) \nabla g(x_j-z)\,dz +  \chi\beta\,\|\nabla g\|_\infty |e^{-\alpha \theta_1}-e^{-\alpha \theta_2}|\\
	&\le \kappa_3\,\sqrt{\epsilon}\, e^{-\alpha \theta_1}.
	\end{align*}
	Putting $\theta_1 = (n-1)\epsilon-s$ and $\theta_2 = t-s$, we obtain
	\begin{equation}\label{A1n}
	|A_n^{(1,i)}(t)|\le \kappa_3\,\sqrt{\epsilon} \int_0^{(n-1)\epsilon} e^{-\alpha((n-1)\epsilon-s)}\,ds\le \frac{\kappa_3}{\alpha}\sqrt{\epsilon} .
	\end{equation}
	
	For $A_n^{(2,i)}(t)$, note that for $s \in [0, (n-1)\eps]$, $t \in [(n-1)\eps, n\eps]$
	\begin{align*}
	&\,|G_\theta(\tilde{Y}^j_s,Y^i_{n-1})- G_\theta(X_s,X^i_t)|\notag\\
	=& \,\frac{ \chi\beta\,e^{-\alpha\theta}}{N}\sum_{j=1}^N\int_{ \R^d} \Big (p(\theta,Y^i_{n-1}-z)\nabla g(\tilde{Y}^j_s-z)-p(\theta,X^i_t-z)\nabla g(X^j_s-z)\Big)\,dz\\
	=&\, \frac{ \chi\beta\,e^{-\alpha\theta}}{N}\sum_{j=1}^N\int p(\theta,X^i_t-z)\Big[ \nabla g(\tilde{Y}^j_s-Y^i_{n-1}+X^i_t-z)-\nabla g(X^j_s-z)\Big]\,dz\\
	\le &\, \frac{ \chi\beta\,e^{-\alpha\theta}}{N}\sum_{j=1}^N d\|\Hess g\|_\infty (|\tilde{Y}^j_s-X^j_s+X^i_t-Y^i_{n-1}|)
	\end{align*}
	where we have used the substitution $z\mapsto Y^i_{n-1}-X^i_t+w$ in the first integral in the first equality.
	When $s\in[(k-1)\epsilon,k\epsilon)$, we have $\tilde{Y}_s = Y_{k-1}$ and 
	\begin{align*}
	|\tilde{Y}^j_s- X^j_s+& X^i_t-Y^i_{n-1}| \le |Z^j_{k-1}|+|Z^i_n|+ \Err_{s,t}(i,j), 
	\end{align*}	
	where $\Err_{s,t}(i,j)\doteq |X^j_{(k-1)\epsilon}-X^j_s|+|X^i_t-X^i_{n\epsilon}|+|Y^i_{n}-Y^i_{n-1}|$ is an error term which, in view of  \eqref{Yincp} and \eqref{Xincp} satisfies,
	\begin{equation}\label{Errij}
	\E[\Err_{s,t}(i,j)^2] \leq \kappa_4\,\epsilon.
	\end{equation}
	
	From the above calculations and recalling that $C_2 = d \|\Hess g\|_{\infty}$
	\begin{align}\label{A2n}
	&\,|A_n^{(2,i)}(t)| \,= \Big|\sum_{k=1}^{n-1}\int_{ (k-1)\epsilon}^{k\epsilon} G_{(n-1)\epsilon-s}(Y_{k-1},Y^i_{n-1})-G_{(n-1)\epsilon-s}(X_s,X^i_t)\,ds\Big|\notag\\
	&\le \frac{ \chi\beta\,C_2}{N}\sum_{j=1}^N \sum_{k=1}^{n-1}\int_{(k-1)\epsilon}^{k\epsilon}e^{-\alpha((n-1)\epsilon-s)}\,\big(|Z^j_{k-1}|+|Z^i_n|+ \Err_{s,t}(i,j)\big)\,ds.
	\end{align}
	Using \eqref{A1n} and \eqref{A2n} in \eqref{an}, 
	\begin{align}\label{an2}
	|a^i_n|\le & \,\kappa_5\, \epsilon^{3/2} \,+\,\frac{ \chi\beta\,C_2}{N}\int_{(n-1)\epsilon}^{ n\epsilon} \Big(\sum_{k=1}^{n-1}\int_{(k-1)\epsilon}^{k\epsilon} e^{-\alpha ((n-1)\epsilon-s)}\notag\\
	& \qquad\qquad\qquad\qquad\qquad\sum_{j=1}^N\big(|Z^j_{k-1}|+|Z^i_n|+ \Err_{s,t}(i,j)\big)\,ds\Big) dt.
	\end{align}
	
	{\bf Step (iii) } We now combine steps (i) and (ii) to obtain an inequality for 
	$$f_n\doteq \frac{1}{N}\sum_{i=1}^N\E|Z^i_n|^2.$$
	This inequality is the discrete analogue of a differential inequality similar to \eqref{E:UniformPOC_PP5}.
	By exchangeability we have
	\begin{equation}
	\label{eq:eq1018}
	f_n = \E|Z^i_n|^2, \; i = 1, \cdots, N.
	\end{equation}
	By Cauchy-Schwartz inequality, the fact 
	$\frac{1}{N}\sum_{i=1}^N\sqrt{\E|Z^i_n|^2}\leq \sqrt{f_n}$ and the bound \eqref{Errij},
	\begin{align}
	&\frac{1}{N^2}\sum_{i=1}^N\sum_{j=1}^N\E \Big[|Z^i_n|\,\big(|Z^j_{k-1}|+|Z^i_n|+ \Err_{s,t}(i,j)\big)\Big]\notag\\
	\leq &\,f_n + \Big(\frac{1}{N}\sum_{i=1}^N\sqrt{\E|Z^i_n|^2}\Big)\Big(\frac{1}{N}
	\sum_{i=1}^N\sqrt{\E|Z^i_{k-1}|^2}\Big)+ \frac{1}{N}\sum_{i=1}^N\sqrt{\E|Z^i_n|^2}\,\left(\frac{1}{N}\sum_{j=1}^N\E|\Err_{s,t}(i,j)|^2\right)^{1/2} \notag\\
	\leq &\,f_n+\sqrt{f_n\,f_{k-1}} +\kappa_6\,\sqrt{\epsilon}\,\sqrt{f_n}.\label{anzn_old}
	\end{align}
	Let
	\begin{equation}\label{signma_nk}
	\sigma_{n,k}\doteq\int_{(k-1)\epsilon}^{k\epsilon}e^{-\alpha((n-1)\epsilon-s)}\,ds=\frac{e^{-\alpha\epsilon\,n}}{\alpha}\big(e^{\alpha\epsilon(k+1)}-e^{\alpha\epsilon\,k}\big)
	\end{equation}
	Then $\sum_{k=1}^{n-1}\sigma_{n,k}=(1-e^{-\alpha(n-1)\epsilon})/\alpha\leq 1/\alpha$ and
	from \eqref{an2} and \eqref{anzn_old} 
	we have
	\begin{align}\label{sum_anzn}
	\frac{1}{N}\sum_{i=1}^N\E[|a^i_n\,Z^i_n|]
	\leq \frac{ \chi\beta\,C_2}{\alpha}\,\epsilon\,f_n+  \chi\beta\,C_2\,\epsilon\,\sqrt{f_n}\sum_{k=1}^{n-1}\sigma_{n,k}\,\sqrt{f_{k-1}}\,+
	\kappa_7\,\epsilon^{3/2}\sqrt{f_n}.	
	\end{align}
	Similarly,
	\begin{align}\label{sum_anzn-1}
	\frac{1}{N}\sum_{i=1}^N\E[|a^i_n\,Z^i_{n-1}|]
	\leq \frac{ \chi\beta\,C_2}{\alpha}\,\epsilon\,f_{n-1}+  \chi\beta\,C_2\,\epsilon\,\sqrt{f_{n-1}}\sum_{k=1}^{n-1}\sigma_{n,k}\,\sqrt{f_{k-1}}\,+\kappa_7\,\epsilon^{3/2}
	\sqrt{f_{n-1}}.	
	\end{align}
	
	Therefore, summing over $i$ in \eqref{Z_ninc} and then using \eqref{bnZn}, \eqref{bnZn-1} \eqref{sum_anzn} and \eqref{sum_anzn-1}, we obtain a nonlinear ``difference-summation" inequality
	
	\begin{align}\label{DiscreteDE}
	f_n-f_{n-1}\le\; -\tilde \lambda \,\epsilon \big(f_n+f_{n-1}\big) &+ \tilde{C_2}\,\epsilon\big(\sqrt{f_n}+\sqrt{f_{n-1}}\big)\,\sum_{k=1}^{n-1}\sigma_{n,k}\,\sqrt{f_{k-1}} \notag\\
	&+\kappa_8\,\epsilon^{3/2}\big(\sqrt{f_n}+\sqrt{f_{n-1}}\big),	
	\end{align}
	where $\sigma_{n,k}$ is defined in \eqref{signma_nk}, and as before, $-\tilde \lambda\doteq -v_*\,+\chi C_1 +C_2\chi\beta/\alpha$ and $\,\tilde{C_2}\doteq C_2\,\chi\,\beta$.
	We can rewrite \eqref{DiscreteDE} as
	\begin{align} \label{DiscreteDE2ab}
	\frac{f_n-f_{n-1}}{\epsilon}\le& -\tilde \lambda \,\big(f_n+f_{n-1}\big) + \,\big(\sqrt{f_n}+\sqrt{f_{n-1}}\big)\,
	\Big(
	\tilde{C_2}\,\sum_{k=1}^{n-1}\sigma_{n,k}\,\sqrt{f_{k-1}} +\kappa_8\,\sqrt{\epsilon}\,\Big)	
	\end{align}	
	which is the discrete analogue of a differential inequality similar to \eqref{E:UniformPOC_PP5}. 
	
	{\bf Step (iv)}
	Finally, we use \eqref{DiscreteDE} to obtain a uniform (in $N,\,n,\,\epsilon$) upper bound for $f_n$, under Assumption \ref{A:A}. Note that under this assumption $\tilde \lambda > 0$. 
	Let
	$$\delta\doteq\frac{1-\tilde \lambda\,\epsilon}{1+\tilde \lambda\,\epsilon}.$$
	Note that $\delta \in (0,1)$ for $\epsilon$ small enough.
	Moving the negative term $ -\tilde \lambda \,\epsilon \big(f_n+f_{n-1}\big)$ to the other side of the inequality in
	\eqref{DiscreteDE2ab} and then multiplying both sides by  $\delta^{-(n-1)}$ and letting $g_n=f_n/\delta^{n-1}$
	we obtain 
	\begin{align}\label{g_n}
	g_n-g_{n-1} \leq& \,\frac{\epsilon}{(1+\tilde \lambda\,\epsilon)\,\delta^{n-1}}\,\big(\sqrt{f_n}+\sqrt{f_{n-1}}\big)\,
	\Big(
	\tilde{C_2}\,\sum_{k=1}^{n-1}\sigma_{n,k}\,\sqrt{f_{k-1}} +\kappa_8\,\sqrt{\epsilon}\,\Big)\notag\\
	\leq &\, \frac{\epsilon}{(1+\tilde \lambda\,\epsilon)\,\delta^{n/2}}\,\big(\sqrt{g_n}+\sqrt{g_{n-1}}\big)\,
	\Big(
	\tilde{C_2}\,\sum_{k=1}^{n-1}\sigma_{n,k}\,\delta^{(k-2)/2}\,\sqrt{g_{k-1}} +\kappa_8\,\sqrt{\epsilon}\,\Big)
	\end{align}
	where  the second inequality follows on noting that $\sqrt{f_n}+\sqrt{f_{n-1}}\leq \delta^{(n-2)/2}(\sqrt{g_n}+\sqrt{g_{n-1}})$ for $n\geq 2$. Similar to the proof of Theorem \ref{T:UniformPOC_PP} we consider a small positive perturbation of $g_n$ and let  $h_{\theta}(n)\doteq\sqrt{g_n+\theta^2}$ where $\theta>0$. The inequality in \eqref{g_n} then implies
	\begin{align*}
	h^2_{\theta}(n)-h^2_{\theta}(n-1)
	\leq &\, \frac{\epsilon}{(1+\tilde \lambda\,\epsilon)\,\delta^{n/2}}\,\big(h_{\theta}(n)+h_{\theta}(n-1)\big)\,
	\Big(
	\tilde{C_2}\,\sum_{k=1}^{n-1}\sigma_{n,k}\,\delta^{(k-2)/2}\, h_{\theta}(k-1) +\kappa_8\,\sqrt{\epsilon}\,\Big).
	\end{align*}	
	Since $h_{\theta}$ is strictly positive, we  obtain
	\begin{align}\label{h_n}
	h_{\theta}(n)-h_{\theta}(n-1)
	\leq &\, \frac{\epsilon}{(1+\tilde \lambda\,\epsilon)\,\delta^{n/2}}\,
	\Big(
	\tilde{C_2}\,\sum_{k=1}^{n-1}\sigma_{n,k}\,\delta^{(k-2)/2}\,h_{\theta}(k-1) +\kappa_8\,\sqrt{\epsilon}\,\Big).
	\end{align}	
	Consider the recursion equation obtained by replacing the inequality in \eqref{h_n} by equality, namely
	\begin{align}\label{h_n2}
	k_n-k_{n-1}
	= &\, \,
	A\,\Big(\sum_{i=1}^{n-1}\sigma_{n,i}\,\delta^{(i-2-n)/2}\,k_{i-1}\Big) +\frac{B}{\delta^{n/2}},
	\end{align}
	and $k_0=h_{\theta}(0)=\theta$, where 
	\begin{equation}\label{eq:eq1046}
	A=\frac{\epsilon\,\tilde{C_2}}{1+\tilde \lambda\,\epsilon}=O(\epsilon)\quad \text{and}\quad B= \frac{\epsilon^{3/2}\kappa_8}{1+\tilde \lambda\,\epsilon}=O(\epsilon^{3/2}).
	\end{equation}
	
	By evaluating $\Delta_n-\frac{e^{-\alpha \epsilon}}{\delta^{1/2}}\Delta_{n-1}$ where $\Delta_{n-1}\doteq k_n-k_{n-1}$, we can convert the above equation \eqref{h_n2} to the following second order linear difference equation:
	\begin{equation}\label{SecondDifference}
	\begin{cases}
	k_{n+1}- p\, k_n +q\,k_{n-1} =A_n,\;n\geq 2,\\
	\quad k_0=\theta,\quad k_1=\theta+\frac{B}{\delta^{1/2}},
	\end{cases}
	\end{equation}
	where 
	$$p=1+\frac{e^{-\alpha\epsilon}}{\delta^{1/2}}\to 2,\; q=\frac{e^{-\alpha\epsilon}}{\delta^{1/2}}- \frac{A\,(1-e^{-\alpha\epsilon})}{\alpha\,\delta^{3/2}}\to 1\,\text{ as }\epsilon\to 0\text{ and }\, A_n=
	\frac{B(1-e^{-\alpha\epsilon})}{\delta^{(n+1)/2}}.$$
	Solution of \eqref{SecondDifference} can be explicitly given as
	\begin{equation}\label{SecondDifference_sol}
	k_n=c^{(\theta)}_1\,r_1^n +c^{(\theta)}_2r_2^n +c_3\,\delta^{-n/2}
	\end{equation}	
	where $r_1>r_2>0$ are the distinct positive real roots of $r^2-pr+q=0$, and
	\begin{equation}\label{c_theta}
	c^{(\theta)}_2=\frac{r_1(\theta-c_3)-\theta-\delta^{-1/2}(B-c_3)}{r_1-r_2}, \; c^{(\theta)}_1=\theta-c_3-c^{(\theta)}_2  \text{ and }c_3 = \frac{B(1-e^{-\alpha\epsilon})}{1-p\,\delta^{1/2}+q\,\delta}.
	\end{equation}
	On other hand, induction easily gives $h_{\theta}(n)\leq k_n$ for all $n\geq 0$ and $\theta>0$.
	
	Thus we have for all $\theta >0$
	\begin{align*}
	\sqrt{f_n}=\delta^{(n-1)/2}\,\sqrt{g_n}=\delta^{(n-1)/2}\,\sqrt{h^2_{\theta}(n)-\theta^2}\leq \delta^{(n-1)/2}\,k_n.
	\end{align*}
	Sending $\theta\to 0$ in \eqref{c_theta} we obtain
	\begin{align*}
	\sqrt{f_n}&\,\leq \frac{c_1}{\delta^{1/2}}\,(\delta^{1/2}\,r_1)^n +\frac{c_2}{\delta^{1/2}}\,(\delta^{1/2}\,r_2)^n+ \frac{c_3}{\delta^{1/2}}
	\end{align*}	
	where 
	\begin{equation}\label{c_0}
	c_2=\frac{(\delta^{-1/2}-r_1)c_3 -\delta^{-1/2}B}{r_1-r_2}\quad \text{and}\quad c_1=-c_3-c_2. 
	\end{equation}
	We claim that for some $\kappa_9, \kappa_{10}\in (0,\infty)$
	\begin{equation}\label{claim}
	0<c_3 < \kappa_9\sqrt{\epsilon},\;
	r_1-r_2 > \kappa_9 \epsilon\quad \text{and}\quad 0<1-\delta^{1/2}\,r_1 < \kappa_9\epsilon
	\end{equation}
	for $\epsilon\in (0,\,\kappa_{10})$. This will imply from \eqref{eq:eq1046} that both $|c_3|$ and
	$|c_2|$ are of order $\sqrt{\epsilon}$ and hence by \eqref{c_0}, $|c_1|$ is also of order $\sqrt{\epsilon}$. Also, $0<1-\delta^{1/2}\,r_1$ implies $(\delta^{1/2}\,r_1)^n\to 0$ as $n\to\infty$. Therefore we obtain the desired bound
	\begin{equation*}
	\sup_{n\ge0}\sqrt{f_n}\le \kappa_{11}\,\sqrt{\epsilon},
	\end{equation*}
	for $\epsilon$ sufficiently small.
	The claim \eqref{claim} is established in the Appendix. The proof is now complete
	in view of \eqref{eq:eq1018}.
\end{proof}

We now complete the proof of Corollary \ref{cor:EScheme}.

\begin{proof}[Proof of Corollary \ref{cor:EScheme}]
	The first statement in the corollary is immediate from Corollary \ref{cor:UniformPOC_PE_2_1}
	and Theorem \ref{EScheme}.  For the second statement, we have from triangle inequality 
	$$\mathcal{W}_2(\mu_n^{N,\eps}, \mu_{n\eps}) \leq \mathcal{W}_2(\mu_n^{N,\eps}, \mu^N_{n\eps}) + \mathcal{W}_2(\mu^N_{n\eps},\,\mu_{n\epsilon}).$$
	Also, from Theorem \ref{EScheme},
	\begin{equation*} 
	\limsup_{N\to \infty} \sup_{n\ge 1}\E\mathcal{W}^2_2(\mu_n^{N,\eps}, \mu^N_{n\eps}) \le
	\limsup_{N\to \infty} \sup_{n\ge 1} \Big(\frac{1}{N}\sum_{i=1}^N \E|Y^{i,N,\epsilon}_n-X^{i,N}_{n\epsilon}|^2 \Big)\le C\eps.
	\end{equation*}
	The result now follows on combining the above two displays with Corollary \ref{cor:UniformPOC_PE_2_1}.
\end{proof}
\bigskip

\appendix
\section{Proof of \eqref{claim}.}
To see the first inequality in the claim \eqref{claim}, note that
\begin{align}\label{boundC_3}
\frac{c_3}{\sqrt{\epsilon}}
&\,=\frac{\delta^{1/2}\alpha\kappa_8}{\alpha\,\Big(\frac{\delta^{1/2}-\delta}{\epsilon}\Big)(1+\tilde\lambda\,\epsilon)\,-\,\tilde{C_2}}.
\end{align}
The inequality is now a consequence of the observation that $\delta^{1/2} \to 1$ as $\eps \to 0$ and 
$$
\lim_{\eps \to 0} \alpha\,\Big(\frac{\delta^{1/2}-\delta}{\epsilon}\Big)(1+\tilde\lambda\,\epsilon)\,-\,\tilde{C_2} = 
\alpha\,\tilde\lambda-\tilde{C_2} >0,
$$
where the last inequality is from Assumption \ref{A:A}.
Hence the first estimate holds.

The second inequality in the claim \eqref{claim} follows on observing that, as $\eps \to 0$,
%
$$\frac{(r_1-r_2)^2}{\eps^2}=\frac{p^2-4q}{\eps^2} = \frac{1}{\eps^2}\Big(1-\frac{e^{-\alpha\epsilon}}{\delta^{1/2}}\Big)^2+\frac{4A\,(1-e^{-\alpha\epsilon})}{\eps^2\alpha\,\delta^{3/2}}
\ge \frac{4A\,(1-e^{-\alpha\epsilon})}{\eps^2\alpha\,\delta^{3/2}} \to 4 \tilde C_2.
$$

For the third inequality, we need to show that $1-\delta^{1/2}\,r_1$ is of order at most $\sqrt{\epsilon}$. Clearly
\begin{align*}
1-\sqrt\delta r_1& = 1-\frac{1}{2}\Big[\sqrt{(p^2-4q)\delta}+p\sqrt\delta \Big]\nonumber\\
&= \frac{1}{2}\Big[(2-p\sqrt\delta )-\sqrt{(p^2-4q)\delta}\Big].\label{1-r1}
\end{align*}
Regarding $p,q$ and $\delta$ as functions of $\eps$, we see that
%
\begin{align*}
p &= p(\eps) = 2+ \eps p'(0) + O(\eps^2),\\
q&= q(\eps) = 1+ \eps q'(0) + O(\eps^2),\\
\sqrt\delta&= \sqrt{\delta(\eps)} = 1+ \frac{1}{2}\eps \delta'(0)+ O(\eps^2)
\end{align*}
Thus
\begin{align*}
(p^2-4q) &= 4\eps (p'(0)-q'(0)) + O(\eps^2) = O(\eps^2)
\end{align*}
where the last equality follows on checking that $p'(0)=q'(0)$.
This shows that $\sqrt{(p^2-4q)\delta} = O(\eps)$. 
Also, clearly $2-p\sqrt{\delta} = O(\eps)$ and so
$$(2-p\sqrt{\delta}) - \sqrt{(p^2-4q)\delta} = O(\eps).$$
The desired inequality  follows. \hfill \qed




\begin{thebibliography}{99}

\bibitem{BRB11}
J. Bedrossian, N. Rodr\'iguez and A. L. Bertozzi. Local and global well-posedness for aggregation equations and Patlak–-Keller-–Segel models with degenerate diffusion.
\textit{Nonlinearity}, 24(6), 1683, 2011.

\bibitem{BCCP}
D. Benedetto,  E. Caglioti, J. A. Carrillo and M. Pulvirenti. A non-Maxwellian steady distribution for one-dimensional granular media. \textit{J. Statist. Phys.}, 91, 979--990, 1998.


\bibitem{B56}
I. Bihari, A generalization of a lemma of Bellman and its application to uniqueness problems of differential equations.
\textit{Acta Mathematica Hungarica},  7(1), 81--94, 1956.


\bibitem{BGV05}
F. Bolley, A. Guillin, and C. Villani. Quantitative concentration inequalities for empirical measures on non-compact spaces. 
\textit{Probab. Theory Relat. Fields}, 137(3-4), 541-593, 2007




\bibitem{CM14}
K. Carrapatoso and S. Mischler. Uniqueness and long time asymptotic for the parabolic-parabolic Keller-Segel equation.
\textit{arXiv preprint}, arXiv:1406.6006.


\bibitem{CC08}
V. Calvez and L. Corrias. The parabolic-parabolic Keller-Segel model in R2.
\textit{Communications in Mathematical Sciences}, 6(2), 417--447, 2008.

\bibitem{CHJ12}
J. A. Carrillo, S. Hittmeir and A. J\"ungel.   Cross diffusion and nonlinear diffusion preventing blow up in the Keller–Segel model.
\textit{Mathematical Models and Methods in Applied Sciences},  22(12), 1250041, 2012.

\bibitem{CMV03}
J. Carrillo, R. McCann, and C. Villani. Kinetic equilibration rates for granular media and related equations: entropy dissipation and mass transportation estimates. 
\textit{Rev. Mat. Iberoamericana}, 19, 971--1018, 2003.



\bibitem{CGM06}
P. Cattiaux, A. Guillin, and F. Malrieu. Probabilistic approach for granular media equations in the non uniformly convex case. 
\textit{Probab. Theory Relat. Fields}, 140(1-2), 19-40, 2008


\bibitem{DR08}
Di Francesco, M. and Rosado, J.  Local and global well-posedness for aggregation equations and Patlak- Keller-Segel models with degenerate diffusion.
\textit{Nonlinearity},  21(11), 2715, 2008.



\bibitem{Fat13}
I. Fatkullin, A study of blow-ups in the Keller-Segel model of chemotaxis. 
\textit{Nonlinearity}, 26 (2013), 81-94.

\bibitem{FG15}
N. Fournier and A. Guillin.  On the rate of convergence in Wasserstein distance of the empirical measure. 
\textit{Probab. Theory Relat. Fields}, 162(3-4), 707-738, 2015

\bibitem{Hor03}
D. Horstmann. From 1970 until present: the Keller-Segel model in chemotaxis and its consequences I. 
\textit{Jahresbericht der Deutschen Mathematiker Vereinigung}, 106(2), 51--70. 2003

\bibitem{Hor04}
D. Horstmann. From 1970 until present: the Keller-Segel model in chemotaxis and its consequences II. 
\textit{Jahresbericht der Deutschen Mathematiker Vereinigung}, 106(2), 51--70. 2004


\bibitem{KS70}
E. F. Keller and L. A. Segel. Initiation of slime mold aggregation viewed as an instability. 
\textit{Journal of Theoretical Biology}, 26(3), 399--415, 1970.

\bibitem{KK10}
Peter M. Kotelenez and Thomas G. Kurtz.  Macroscopic limits for stochastic partial differential
equations of McKeanVlasov type. 
\textit{Probab. Theory Relat. Fields}, 146:189222, 2010.




\bibitem{Mal03}
F. Malrieu. Convergence to equilibrium for granular media equations and their Euler schemes. 
\textit{Ann. Appl. Prob.}, 13(2), 540--560, 2003.

\bibitem{Mck}
H. P. McKean Jr.
Propagation of chaos for a class of non-linear parabolic equations. \textit{Stochastic Differential Equations (Lecture Series in Differential Equations, Session 7, Catholic Univ.}, 41-57, 1967.

\bibitem{M96}
S. M\'el\'eard.  Asymptotic behavior of some interacting particle systems; McKean-Vlasov and
Boltzmann models.  
\textit{Lecture Notes in Mathematics} Vol 1627, 42-95, Springer, Berlin, 1996.


\bibitem{Pat53}
C. S. Patlak. Random walk with persistence and external bias. \textit{Bulletin of mathematical biophysics}, 15(3), 311--338. 1953.

\bibitem{SF07}
F. Schweitzer, and J. D. Farmer, 
{\it Brownian agents and active particles: collective dynamics in the natural and social sciences.}
Springer Science and Business Media. 2007.

\bibitem{Ste00}
A. Stevens. The derivation of chemotaxis equations as limit dynamics of moderately interacting stochastic many-particle systems. 
\textit{SIAM Journal on Appl. Math.}, 61, 1, 183--212, 2000.

\bibitem{Szn91}
A.-S. Sznitman. Topics in propagation of chaos. 
\textit{Lecture Notes in Mathematics} Vol 1464, 165--251, 1991.

\bibitem{Talay02}
D. Talay. Stochastic Hamiltonian systems: exponential convergence to the invariant measure, and discretization by the implicit Euler scheme. 
\textit{Markov Process. Appl}, 8(2), 163--198, 2002.

\bibitem{Tam}
Y. Tamura.
Free energy and the convergence of distributions of diffusion processes of McKean type.
\textit{J. Fac. Sci. Univ. Tokyo Sect. IA Math.}, 34(2), 443-484, 1987.


\bibitem{V08}
C. Villani.
\textit{Optimal transport: old and new}, Vol 338.
Springer Science and Business Media, 2008.

\end{thebibliography}



\ACKNO{We would like to thank Tom Kurtz for many helpful discussions related to this work. We also thank the referee for a careful review.}


\end{document}